\documentclass[12pt,letterpaper]{article}
\usepackage{amitstyle}
\usepackage{calrsfs}

\DeclareMathOperator{\cost}{cost}
\DeclareMathOperator{\dist}{dist}        
\newcommand{\dGT}{\dist_{\mathrm{GM}}} 

\DeclareMathOperator{\Lan}{Lan}
\DeclareMathOperator{\Ran}{Ran}

\newcommand{\vect}{\mathsf{vec}}         
\newcommand{\Pos}{\mathbf{Pos}}          

\DeclareMathOperator{\Match}{Match}      
\DeclareMathOperator{\Bij}{Bij}          
\DeclareMathOperator{\Cone}{Cone}        
\DeclareMathOperator{\Summands}{Smd}
\newcommand{\smd}{\operatorname{smd}}

\newcommand{\frkU}{\mathfrak{U}}

\newcommand\Nname[1]{|[alias=#1]|}

\newcommand\al{\alpha}
\newcommand\be{\beta}
\newcommand\ep{\varepsilon}

\newcommand\Ga{\Gamma}

\DeclareMathOperator{\Ext}{Ext}

\newcommand\prj{\operatorname{prj}}

\newcommand\calA{{\mathcal A}}
\newcommand\calB{{\mathcal B}}

\newcommand\calC{{\mathcal C}}

\newcommand\calD{{\mathcal D}}

\newcommand\calI{{\mathcal I}}

\newcommand\bbN{\mathbb{N}}

\newcommand\bbR{\mathbb{R}}

\newcommand\bbZ{\mathbb{Z}}

\DeclareMathOperator{\Int}{Int}   

\newcommand\ds{\oplus}

\newcommand\Ds{\bigoplus}

\newcommand\id{1\kern-.25em{\text{{\rm l}}}} 

\newcommand\isoto{\ \raise.8ex\hbox{$^{\sim}$}\kern-.7em\hbox{$\to$}\ }

\newcommand\Cdot{\raisebox{1pt}{\scalebox{0.4}{$\bullet$}}}
\newcommand\down{_{\Cdot}}

\newcommand\blank{\operatorname{-}}

\newcommand\bfP{\mathbf{P}}

\newcommand\bfQ{\mathbf{Q}}

\newcommand\bfR{\mathbf{R}}
\newcommand\bfS{\mathbf{S}}
\newcommand\bfT{\mathbf{T}}

\newcommand\Fun{\operatorname{Fun}}

\newcommand\Cat{\mathbf{Cat}}

\DeclareMathOperator{\Hom}{Hom}

\renewcommand{\k}{\Bbbk}

\newcommand{\To}{\Rightarrow}
\newcommand\Ar[3]{\ar[from={#1}, to={#2}, #3]}
\newcommand{\si}{\sigma}
\newcommand{\de}{\delta}

\newcommand{\et}{\eta}

\newcommand{\ro}{\rho}

\newcommand{\ovl}{\overline}

\newcommand{\bsmat}[1]{\left[\begin{smallmatrix}#1\end{smallmatrix}\right]}
\newcommand{\ang}[1]{\langle #1 \rangle}
\newcommand{\Ker}{\operatorname{Ker}}
\newcommand{\Ac}{\operatorname{Ac}}
\newcommand{\kP}[1]{\k[\bfP]_{#1}}
\newcommand{\kIP}[1]{\k[\Int\oP]_{#1}}
\newcommand{\smat}[1]{\begin{smallmatrix}#1\end{smallmatrix}}
\newcommand{\ya}[1]{\xrightarrow{#1}}
\newcommand{\oP}{\ovl{\bfP}}
\newcommand{\oQ}{\ovl{\bfQ}}
\newcommand{\Into}[1]{\Int \ovl{#1}}
\newcommand{\ext}{\operatorname{ext}}

\newcommand{\supp}{\operatorname{supp}}
\newcommand{\gInt}{\operatorname{gInt}}
\newcommand{\dset}{\mathord{\downarrow}} 
\newcommand{\uset}{\mathord{\uparrow}} 
\newcommand{\vct}{\boldsymbol}
\newcommand{\mset}[1]{\{\!\{#1\}\!\}}
\newcommand{\Mat}{\operatorname{Mat}}
\newcommand{\pMatch}{\operatorname{pMatch}}
\newtheorem{ntn}[definition]{Notation}
\newcommand{\Kb}{\mathcal{K}^{\mathrm{b}}}
\newcommand{\Cb}{\mathcal{C}^{\mathrm{b}}}
\newcommand{\Knn}{\mathcal{K}^{\ge 0}}
\newcommand{\Cnn}{\mathcal{C}^{\ge 0}}
\newcommand{\Db}{\mathcal{D}^{\mathrm{b}}}

\newcommand{\Pad}{\operatorname{Pad}}
\newcommand{\Padnn}{\Pad^{\ge 0}}
\newcommand{\distCM}{\operatorname{dist}_{\mathrm{CM}}}
\newcommand{\distM}{\operatorname{dist}_{\mathrm{M}}}
\newcommand{\add}{\operatorname{add}}

\begin{document}

\author{Hideto Asashiba\thanks{Supported by JSPS Grant-in-Aid for Scientific Research (C) 18K03207
and 25K06922; 
JSPS Grant-in-Aid for Transformative Research Areas (A) (22A201); 
Osaka Central Advanced Mathematical Institute (MEXT Promotion of Distinctive Joint Research Center Program JPMXP0723833165).}
\and Amit K. Patel}

\title{Complex Matching Distance and Stability for Minimal Projective Resolutions, with Applications to Persistence}
\maketitle

\begin{abstract}

We develop a stability theory for minimal projective resolutions of
$\mathbf{P}$-modules, where $\mathbf{P}$ is a finite metric poset.
We use the G\"ulen--McCleary distance on $\mathbf{P}$-modules together
with a new complex matching distance on bounded complexes of finitely
generated projective $\mathbf{P}$-modules. The latter yields an
extended metric on homotopy classes of such complexes and restricts to
minimal projective resolutions. Our main theorem shows that this induced
distance on minimal projective resolutions is bounded above by the
G\"ulen--McCleary distance.

As an application, we pass to the interval poset and kernel construction,
interpreting persistence diagrams as the homotopy class of minimal projective resolutions of
kernel modules. This gives a corresponding stability theorem, which in
the one-parameter case recovers classical bottleneck stability and in the
multiparameter case addresses existing notions of signed diagrams.
\end{abstract}

\noindent\textbf{2020 Mathematics Subject Classification.} 16G20, 18G10, 55N31, 62R40.

\medskip

\noindent\textbf{Affiliations.}
Department of Mathematics, Faculty of Science, Shizuoka University,
836 Ohya, Suruga-ku, Shizuoka, 422-8529, Japan; Institute for Advanced Study, KUIAS, Kyoto University,
Yoshida Ushinomiya-cho, Sakyo-ku, Kyoto 606-8501, Japan; and
Osaka Central Advanced Mathematical Institute, 3-3-138 Sugimoto, Sumiyoshi-ku, Osaka, 558-8585, Japan. \par
Department of Mathematics, Colorado State University, Fort Collins, CO 80523, USA.

\medskip

\noindent\textbf{Emails.}
\href{mailto:asashiba.hideto@shizuoka.ac.jp}{asashiba.hideto@shizuoka.ac.jp}
\quad|\quad
\href{mailto:amit@akpatel.org}{amit@akpatel.org}

\tableofcontents

\section{Introduction}

Throughout the paper we fix a field $\k$, and all vector spaces and
linear maps are taken over $\k$. We write $\vect$ for the category of
finite-dimensional $\k$-vector spaces. For a poset $\bfP$, a
\emph{$\bfP$-module} is a functor $\bfP\to\vect$; thus $\bfP$-modules
are precisely finite-dimensional representations of the incidence
category $\k[\bfP]$ of $\bfP$.
Their category is denoted by $\vect^\bfP$.
This viewpoint encompasses both persistence modules in topological data
analysis and functor-valued representations studied in representation
theory.

In the one-parameter case, where $\bfP$ is a finite totally ordered
poset, finitely generated persistence modules enjoy a remarkably rigid
structure: every such module decomposes as a direct sum of interval
modules, and its persistence diagram may be understood either as the
multiset of indecomposable interval summands or as a M\"obius inversion
of kernel or rank functions on the interval poset
\cite{MR1949898,MR2121296,cohen2007stability,Patel:2018,
kim2021generalized,McClearyPatel2022EditDistance}. Bottleneck stability
then identifies the interleaving distance with an optimal-transport-type
distance between persistence diagrams
\cite{cohen2007stability,ChazalCohenSteinerGlisseGuibasOudot2009,
chazal2016structure}.

For multiparameter persistence, this picture breaks down. Such modules
need not decompose into intervals, and the M\"obius inversion of their
kernel or rank functions is, in general, \emph{signed}
\cite{kim2021generalized,McClearyPatel2022EditDistance,
ASASHIBA2023107397,ASASHIBA2023100007}. Viewed through M\"obius
homology~\cite{patel2023mobius_homology} and the rank-exact Betti-table
perspective, these signed coefficients naturally separate into
homological degrees, giving rise to the ``signed barcodes'' of
Botnan--Oppermann--Oudot--Scoccola \cite{BOTNAN2024109780}. As explained
in Remark~\ref{rmk:mobius-betti} and in more detail in~\cite[Appendix A]{bauer2026algebraic}, this relationship can be expressed in
terms of minimal projective resolutions.

A central difficulty has been to formulate a satisfactory \emph{stability theorem} in this signed, multiparameter context. In one parameter, the bottleneck distance is a genuine metric and, by the isometry theorem of Lesnick, coincides with the interleaving distance \cite{Lesnick2015}. In contrast, natural extensions of the bottleneck distance to signed barcodes typically fail the triangle inequality, yielding only weak lower bounds on interleaving \cite{BOTNAN2024109780}. Related issues arise in the setting of the M\"obius inversion interpretation of a multiparameter persistence diagram; see \cite{McClearyPatel2022EditDistance} and the clarification \cite{McClearyPatel2025Erratum}. Parallel work of Bubenik and Elchesen develops Wasserstein-type metrics on so-called virtual persistence diagrams \cite{BubenikElchesen2022}.

\medskip
\noindent\textbf{Main idea.}
In this paper we construct a metric stability theory that operates
directly on $\bfP$-modules and their minimal projective resolutions.

First, for a finite metric poset $(\bfP,d_\bfP)$, we combine the
G\"ulen--McCleary distance on $\vect^\bfP$ with a new complex matching
distance on bounded complexes of finitely generated projective
$\k[\bfP]$-modules. The latter yields an extended metric on the homotopy
category $\Kb(\prj \k[\bfP])$ and restricts to minimal projective
resolutions. Our main theorem (Theorem~\ref{thm:stability}) proves
\begin{equation}
\label{eq:stability-thm}
\distCM\bigl(P\down^M,P\down^N\bigr)\le \dGT(M,N)
\qquad (M,N\in\vect^\bfP),
\end{equation}
showing that Galois transport controls the induced complex matching
distance on minimal projective resolutions.

Second, we apply this framework to persistence via the kernel functor $K:\vect^\bfP\to\vect^{\Int\oP}$, treating the homotopuy class of the full minimal projective resolution $K\down^M$ as the persistence diagram for $M$. The classical diagram and the various generalizations discussed earlier are merely shadows of this enriched structure: by retaining all boundary maps, the resolution encodes these prior invariants alongside strictly more homological information. We prove a bottleneck stability theorem for these resolutions that recovers classical bottleneck stability in the one-parameter case.

\subsection{Purpose}

The purpose of this paper is twofold. First, at the level of arbitrary
$\bfP$-modules, we construct a metric stability theory for minimal
projective resolutions. Namely, we define an extended metric $\distCM$
on the set of isomorphism classes of minimal projective resolutions of
$\bfP$-modules {\em in such a way that the stability \eqref{eq:stability-thm} holds}. 
Second, we specialize this framework to
persistence by passing to the interval poset and the kernel
construction, recovering classical bottleneck stability in one
parameter and extending it to signed multiparameter diagrams.

Minimal projective resolutions and their Betti tables have recently
emerged as central invariants for multiparameter persistence modules,
both in the rank-exact setting and in the usual multigraded setting
\cite{BOTNAN2024109780,doi:10.1137/22M1489150}. On the computational
side, there is now a growing body of work on algorithms for computing
minimal presentations and resolutions, and hence Betti tables, for
low-parameter modules
\cite{LesnickWright2022,dey2023computing,CGRST2024FoCM}. Our approach
adds a metric layer to these developments by viewing minimal projective
resolutions as the recipients of a bottleneck type stability theorem.

For a general $\bfP$-module $M$, its M\"obius homology is encoded in a minimal projective resolution $P\down^M$ via $\Ext$-groups, providing a formal bridge between M\"obius inversion and minimal projective resolutions. We briefly describe this encoding in Remark~\ref{rmk:mobius-betti}, but refer to~\cite[Appendix A]{bauer2026algebraic} for a full treatment. In the present work, this bridge is relevant only in the final persistence section, where the minimal projective resolutions of kernel modules—stripped of their boundary information—are interpreted as signed persistence diagrams or barcodes, although our results do not explicitly rely on this connection.

Concretely, for a finite metric poset $(\bfP,d_\bfP)$ we adopt the
Galois-theoretic framework introduced by G\"ulen and McCleary
\cite{GulenMcCleary,Guelen2024}. Section~\ref{sec:galois-transport}
recalls their construction of the \emph{G\"ulen--McCleary distance}
\[
\dGT:\vect^\bfP\times\vect^\bfP\longrightarrow[0,\infty].
\]
It is defined by infimizing the cost of \emph{Galois couplings}:
factorizations of two $\bfP$-modules $M$ and $N$ through a common apex
poset $\bfQ$ via Galois insertions
$f:\bfQ\rightleftarrows\bfP:g$ and $h:\bfQ\rightleftarrows\bfP:i$,
together with a module $\Gamma\in\vect^\bfQ$ whose pullbacks along the
right adjoints recover $M$ and $N$. The cost is the $L^\infty$-type
quantity
\[
\cost(\Gamma):=\sup_{q\in\bfQ} d_\bfP\bigl(f(q),h(q)\bigr),
\]
and $\dGT(M,N)$ is the infimum of this quantity over all couplings.
By results of G\"ulen and McCleary, this defines an extended metric on
isomorphism classes of $\bfP$-modules, and in the classical persistence
setting recovers the interleaving distance
\cite[Proposition~6.4]{GulenMcCleary}.

We next turn to the homological side. In
Section~\ref{sec:bneck} we define the \emph{complex matching distance}
on bounded complexes of finitely generated projective $\k[\bfP]$-modules
by matching indecomposable projective summands degreewise, subject to a
compatibility condition on differentials, while allowing padding by
contractible cones, which play the role of matching to the diagonal.
The distance between summands is measured using $d_\bfP$ on the
underlying points. Theorem~\ref{thm:metric_homotopy} shows that this
construction yields an extended metric on the homotopy category
$\Kb(\prj \k[\bfP])$, and Theorem~\ref{thm:extended_metric_projectives}
shows that its restriction to minimal projective resolutions is an
extended metric
\[
\distCM:
(\{P\down^M \in \Kb(\prj\k[\bfP]) \mid M \in \vect^\bfP\}/\!\cong)^2 
\longrightarrow[0,\infty].
\]

Our first stability theorem (Theorem~\ref{thm:stability}) shows that the
G\"ulen--McCleary distance controls this complex matching distance:
\[
\distCM\bigl(P\down^M,P\down^N\bigr)\le \dGT(M,N)\qquad(M,N\in\vect^\bfP).
\]
Conceptually, the proof comes from a single functorial construction:
given a Galois coupling for $(M,N)$, we choose a projective resolution
of the apex module $\Gamma$, pull it back along the two right adjoints
to obtain resolutions of $M$ and $N$, and then match their
indecomposable summands degreewise, with compatibility of differentials
built in.

We then connect this framework to persistence. For a finite metric poset $\bfP$, we consider the interval poset
\[
\Int\oP:=\{[x,y]\mid x,y\in\oP,\ x\le y\}
\]
of the augmented poset $\oP:=\bfP\sqcup\{\top\}$ under the product order, and define a kernel functor
\[
K:\vect^\bfP\longrightarrow\vect^{\Int\oP}
\]
by sending an interval $[x,y]$ to the kernel of the structure map $M(x\to y)$, with $M(\top):=0$ by convention. Proposition~\ref{prop:K-Lipschitz} shows this associated kernel module $K(M)$ makes $K$ $1$-Lipschitz with respect to the G\"ulen--McCleary distance. We then propose the persistence diagram of $M$ to be the homotopy class of the minimal projective resolution $K\down^M$ (Definition~\ref{def:persistence_diagram}). 
Combining this with the stability inequality above yields our main bottleneck stability theorem for persistence (Theorem~\ref{thm:persistence-stability}):
\[
\distCM\bigl(K\down^M,K\down^N\bigr)\le \dGT^\bfP(M,N).
\]
We restrict our focus to $\Int\oP\cong\{[x,y]\subseteq\oP\mid x\le y\}$ rather than more general families of subsets (cf.~\cite{kim2021generalized} or Definition~\ref{dfn:gen-interval}). Even this setting demands delicate homological control, and extending the construction further would require additional combinatorial machinery (see Section~\ref{ssec:beyond-ker}).

\subsection{Outline}
Section~\ref{sec:prelim} collects the categorical and homological
preliminaries for $\bfP$-modules, including projective resolutions and
cones. Section~\ref{sec:galois-transport} reviews the notion of Galois
couplings and the resulting G\"ulen--McCleary distance $\dGT$ on
$\vect^\bfP$, introduced by G\"ulen and McCleary
\cite{GulenMcCleary,Guelen2024}.

Section~\ref{sec:bneck} defines the complex matching distance $\distCM$
on bounded complexes of finitely generated projectives, shows that it
yields an extended metric on the homotopy category $\Kb(\prj
\k[\bfP])$, and then restricts it to minimal projective resolutions. In
the subsequent section we combine this construction with the
G\"ulen--McCleary distance to obtain our main stability theorem
$\distCM\le\dGT$ (Theorem~\ref{thm:stability}).

Section~\ref{sec:persistence} treats persistence as an application. We
introduce the augmented interval-poset construction and the kernel
functor $K:\vect^\bfP\to\vect^{\Int\oP}$, show that these are compatible
with Galois couplings, and deduce that $K$ is $1$-Lipschitz with
respect to the G\"ulen--McCleary distance. 
Interpreting the homotopy class of the minimal
projective resolution of $K(M)$ as its persistence diagrams then
yields the bottleneck stability theorem for persistence
(Theorem~\ref{thm:persistence-stability}).

\subsection*{Acknowledgment}
The authors would like to thank Professor Yasuaki Hiraoka for making this joint work
possible by providing financial support that enabled HA to visit AP for
this project. During that visit, they obtained the key example (see
Example~\ref{exm:stability-thm} for a simplified version).

They are also grateful to Dr.\ Luis Scoccola (LS) for pointing out, through an
example (see Example~\ref{exm:Luis}), that the first version of the complex
matching distance, denoted by $\distCM'$ in the present paper, was too
coarse. This observation led them to refine the notion of matchings so that
it now incorporates substantially more information about the differentials
of complexes, in accordance with HA's original idea, which has thus been
fully realized.

The authors thank LS also for pointing out a similarity between the distance
$\hat{d}^\infty_{\calI}$ defined by Bjerkevik--Lesnick in
\cite[Section~3.1]{bjerkevik2021ell} and the complex matching distance
introduced in this paper. This connection became clear to the authors
after the characterization of the existence of a matching given in
Remark~\ref{rmk:charact-matching}.

\subsection*{Acknowledgment of AI assistance}

This paper was written with the assistance of ChatGPT~5~Pro. The authors have reviewed all results and take full
responsibility for the mathematical content and any remaining errors.

\section{Preliminaries}
\label{sec:prelim}

We start with some categorical facts that will be used throughout.

\begin{lemma}
\label{lem:pres-proj}
Let $L:\calC\to\calD$ be a functor between abelian categories.
If $L$ has an exact right adjoint $R:\calD\to\calC$,
then $L$ sends projectives to projectives.
\end{lemma}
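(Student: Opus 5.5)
The plan is to express projectivity of $L(P)$ through the functor $\Hom_\calD(L(P),-)$ and transport it across the adjunction. Recall that an object $X$ of an abelian category is projective exactly when $\Hom(X,-)$ is exact. Equivalently, $X$ is projective when $\Hom(X,-)$ sends epimorphisms to surjections, since this functor is always left exact.

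Let $P$ be projective in $\calC$. The adjunction $L\dashv R$ gives an isomorphism
\[
\Hom_\calD(L(P),-)\cong\Hom_\calC(P,R(-)),
\]
natural in the second variable. The right-hand side is the composite of $R$ with $\Hom_\calC(P,-)$.

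Now take an epimorphism $e\colon D\to D'$ in $\calD$. Because $R$ is exact, $R(e)$ is an epimorphism in $\calC$. Because $P$ is projective, $\Hom_\calC(P,R(e))$ is surjective. By naturality of the adjunction isomorphism, the square comparing $\Hom_\calD(L(P),e)$ with $\Hom_\calC(P,R(e))$ commutes, and its horizontal arrows are bijections. Hence $\Hom_\calD(L(P),e)$ is surjective as well.

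Concretely, given $u\colon L(P)\to D'$, pass to its adjunct $\tilde u\colon P\to R(D')$. Lift $\tilde u$ through $R(e)$ to a map $P\to R(D)$, and transpose that lift back to obtain $v\colon L(P)\to D$. Naturality of the adjunction bijection then gives $e\circ v=u$.

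There is no real obstacle here. The only point requiring care is the naturality of the adjunction bijection in the second variable, which is exactly what ensures the transposed lift $v$ actually composes with $e$ to give $u$. Note also that only right exactness of $R$ (preservation of epimorphisms) is used, not full exactness.
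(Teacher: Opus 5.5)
Your proof is correct and takes the same route as the paper's: the paper notes that $\calD(L(A),\blank)\cong\calC(A,R(\blank))$ is a composite of exact functors, and you verify exactly this on epimorphisms, including the explicit transposed lift. Your closing remark is fine. Since $R$ is a right adjoint it is automatically left exact, so preserving epimorphisms is already equivalent to full exactness here.
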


\begin{proof}
Let $A$ be projective in $\calC$. Then the functor $\calC(A,\blank)$ is exact, hence so is
\[
\calC(A,\blank)\circ R=\calC(A,R(\blank))\cong \calD(L(A),\blank).
\]
Thus $L(A)$ is projective.
\end{proof}

Let $\frkU$ be a universe. An element of $\frkU$ is called a $\frkU$-\emph{small} set. A category $\calC$ is $\frkU$-\emph{small} if its object set is $\frkU$-small and, for any objects $X,Y$, each hom-set $\calC(X,Y)$ is $\frkU$-small. We denote by $\Cat^\frkU$ the $2$-category whose objects are the $\frkU$-small categories, whose $1$-morphisms are functors, and whose $2$-morphisms are natural transformations.

\begin{proposition}
\label{prp:Yoneda-2-functor}
If $\calC$ is an object of $\Cat^\frkU$, then
\[
\Cat^\frkU(\mbox{-},\calC):(\Cat^\frkU)^{\mathrm{op}}\longrightarrow\Cat^\frkU
\]
is a $2$-functor.
\end{proposition}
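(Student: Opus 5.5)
We have to define the $2$-functor $\Cat^\frkU(\mbox{-},\calC)$ on objects, $1$-morphisms and $2$-morphisms, and then check the functoriality and $2$-functoriality axioms. Each check reduces to associativity and unit laws for composition of functors, together with the interchange law for whiskering of natural transformations. Throughout we use that $(\Cat^\frkU)^{\mathrm{op}}$ reverses $1$-morphisms but keeps the direction of $2$-morphisms.

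\emph{Objects.} For $\calA\in\Cat^\frkU$, set $\Cat^\frkU(\calA,\calC)$ to be the functor category $\Fun(\calA,\calC)$. Its objects are functors and its morphisms are natural transformations. We must check that it is $\frkU$-small.
\begin{itemize}
\item The object set is a subset of the set of pairs of maps $\mathrm{Ob}\,\calA\to\mathrm{Ob}\,\calC$ and $\mathrm{Mor}\,\calA\to\mathrm{Mor}\,\calC$. It is therefore $\frkU$-small, since a universe is closed under products, power sets and subsets.
\item A natural transformation $\al:F\To G$ is an element of $\prod_{a}\calC(Fa,Ga)$, which is again $\frkU$-small. Hence each hom-set is $\frkU$-small.
\end{itemize}

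\emph{$1$-morphisms.} For a functor $u:\calA\to\calB$, define $u^*:=\Cat^\frkU(u,\calC):\Fun(\calB,\calC)\to\Fun(\calA,\calC)$ by precomposition:
\[
F\mapsto F\circ u,\qquad (\al:F\To G)\mapsto \al u:=(\al_{u(a)})_{a\in\calA}.
\]
Functoriality of $u^*$ is immediate from the componentwise definition of vertical composition and identities. We also have $(v\circ u)^*=u^*\circ v^*$ and $(\id_\calA)^*=\id$ strictly, by associativity and unitality of functor composition and of whiskering. So this assignment is strictly functorial on the underlying $1$-category of $(\Cat^\frkU)^{\mathrm{op}}$.

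\emph{$2$-morphisms.} For a natural transformation $\theta:u\To u'$ between functors $\calA\to\calB$, define $\theta^*:u^*\To u'^*$ by
\[
(\theta^*)_F:=F\theta=(F(\theta_a))_{a\in\calA}:Fu\To Fu'.
\]
\begin{itemize}
\item $(\theta^*)_F$ is natural in $a$ because $F$ is a functor and $\theta$ is natural.
\item $\theta^*$ is natural in $F$: for $\al:F\To G$, the square $G\theta_a\circ\al_{ua}=\al_{u'a}\circ F\theta_a$ commutes, which is exactly the naturality of $\al$ applied to the morphism $\theta_a$. This is the interchange law.
\end{itemize}

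It remains to verify the $2$-functor axioms.
\begin{enumerate}
\item Preservation of vertical composition and identity $2$-cells: $(\theta'\theta)^*=\theta'^*\theta^*$ and $(\id_u)^*=\id_{u^*}$. Both follow from functoriality of $F$ applied to the components.
\item Preservation of horizontal composition. For $\theta:u\To u'$ with $u,u':\calA\to\calB$ and $\phi:v\To v'$ with $v,v':\calB\to\calD$, we need $(\phi\ast\theta)^*=\theta^*\ast\phi^*$, with the order of the $1$-cells reversed.
\end{enumerate}
Item 2 is the main bookkeeping obstacle. Both sides evaluated at $F\in\Fun(\calD,\calC)$ have component at $a$ equal to
\[
F(\phi_{u'a})\circ F(v\theta_a)=F(v'\theta_a)\circ F(\phi_{ua}).
\]
The two expressions agree by naturality of $\phi$ and functoriality of $F$. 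One must be careful here with the variance conventions: $1$-cells are reversed in $(\Cat^\frkU)^{\mathrm{op}}$, while $2$-cells keep their direction. With these checks, all the $2$-functor axioms hold strictly.
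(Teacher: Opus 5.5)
Your proof is correct, but it differs from the paper, which gives no argument of its own and simply cites Proposition~4.5.4 of \cite{JY}. That reference handles the general fact about contravariant representables: on a bicategory they are pseudofunctors, and they are strict $2$-functors when the ambient structure is a $2$-category such as $\Cat^\frkU$. You instead verify everything by hand:
\begin{itemize}
\item $\frkU$-smallness of $\Fun(\calA,\calC)$;
\item strict functoriality of $u\mapsto u^*$;
\item the definition $(\theta^*)_F=F\theta$ together with its naturality in $F$, which is naturality of the transformation applied to $\theta_a$;
\item preservation of vertical and horizontal composites, the latter reducing to naturality of $\phi$ and functoriality of $F$.
\end{itemize}

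The citation buys brevity and places the statement inside the general theory of representables. Your direct check buys self-containedness, and it makes explicit two points that the citation leaves implicit but the paper relies on. First, the representable lands in $\Cat^\frkU$ only because $\Cat^\frkU$ has $\frkU$-small hom-categories, which is exactly your smallness verification. Second, $(\Cat^\frkU)^{\mathrm{op}}$ reverses $1$-cells but keeps $2$-cells. This convention is what lets Remark~\ref{rmk:contravariant-adj} turn $f\dashv g$ with unit $\eta$ and counit $\varepsilon$ into $g^*\dashv f^*$ with unit $\eta^*$ and counit $\varepsilon^*$.

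One small completeness point in the smallness step: to know that $\mathrm{Mor}\,\calA$ is $\frkU$-small, you also use closure of $\frkU$ under unions indexed by $\frkU$-small sets, not only closure under products, power sets and subsets. The paper's definition only makes the object set and each individual hom-set $\frkU$-small.
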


\begin{proof}
See Proposition~4.5.4 in \cite{JY}.
\end{proof}

\begin{remark}
\label{rmk:contravariant-adj}
In particular, the $2$-functor $\Cat^\frkU(\mbox{-},\calC)$ sends an adjoint system $(f,g,\eta,\varepsilon)$ to the adjoint system $(g^{*},f^{*},\eta^{*},\varepsilon^{*})$ in a contravariant manner. Concretely, if $f\dashv g$ between categories $\calA$ and $\calB$, then precomposition yields $g^{*}\dashv f^{*}$ between the functor categories $\Cat^\frkU(\calB,\calC)$ and $\Cat^\frkU(\calA,\calC)$. We use this repeatedly with $\calC=\vect$ below
by considering a universe $\frkU$ such that $\vect$ is a $\frkU$-small category.
\end{remark}

We also use the following notation throughout.
The set of nonnegative integers is denoted by $\bbN$; thus $0 \in \bbN$
in this paper.
If $A$ is an abelian (additive) group and $B$ is a set, then $A^B$
denotes the direct product of $B$-copies of $A$, namely the abelian
group of maps $B \to A$, and we set
\[
A^{(B)}:= \{(a_i)_{i \in B} \in A^B \mid \{i \in B \mid a_i \ne 0\}
\text{ is finite}\},
\]
the direct sum of $B$-copies of $A$.

For a finite sequence $\vct{x} = (x_1,\dots, x_n)$ of elements of a set $S$,
we denote by $\mset{x_1, \dots, x_n}$ the \emph{multiset} determined by
$\vct{x}$, obtained by forgetting the order of the elements.
Let $\{a_1, \dots, a_m\}$ be the set of distinct elements occurring among
$x_1, \dots, x_n$, so that $a_i \ne a_j$ whenever $i \ne j$.
For each $i = 1, \dots, m$, let $l_i$ be the multiplicity of $a_i$ in
$\vct{x}$.
Then we may identify $\mset{x_1, \dots, x_n}$ with the set
\[
\{(a_i,1), \dots, (a_i, l_i) \mid i = 1,\dots, m\}.
\]
Via this identification, we apply notions concerning sets to multisets as
well.

\subsection{Poset Modules}
\label{ssec:poset-mod}

Throughout, $\Bbbk$ is a field, and $\bfP=(\bfP,\le)$ is a poset. We
regard $\bfP$ as a small (thin) category: there is a unique morphism
\begin{equation}
\label{eq:mor-in-poset}
\pi_{y \ge x} \colon x \to y
\end{equation}
exactly when $x\le y$.
We assume $\bfP$ is finite.
Let $\vect$ denote the category of finite-dimensional $\Bbbk$-vector
spaces, and write $\vect^\bfP=\Fun(\bfP,\vect)$ for the category of
$\bfP$-modules. If $\bfP$ is finite, then $\vect^\bfP$ is a
$\Bbbk$-linear, abelian, Krull--Schmidt category, and for each
$M,N\in\vect^\bfP$ the $\Bbbk$-space $\vect^\bfP(M,N)$ is finite
dimensional.

Recall that the incidence category $\k[\bfP]$ of $\bfP$ is the
$\k$-linearization of the category $\bfP$.
Namely, it is a $\k$-linear category with the same objects as $\bfP$,
and $\k[\bfP](x,y)$ is the vector space with $\bfP(x,y)$ as a basis for
all $x, y \in \bfP$, and the composition of $\k[\bfP]$ is defined as
the bilinearization of that of $\bfP$.
Any functor $\bfP \to \vect$ is uniquely extended to a $\k$-linear
functor $\k[\bfP] \to \vect$, and this correspondence is uniquely
extended to an isomorphism of categories from $\vect^\bfP$ to the
category $\Fun_\k(\k[\bfP], \vect)$ of $\k$-linear functors from
$\k[\bfP]$ to $\vect$, by which we identify these categories.

Note here that the representable functor $\bfP(x, \blank)$ with
$x \in \bfP$ is a functor from $\bfP$ to the category of small sets, not
to $\vect$. However, if we take the incidence category $\k[\bfP]$ of
$\bfP$ instead of $\bfP$, then the representable functor
\[
\k[\bfP]_x:= \k[\bfP](x, \blank)
\]
becomes a functor $\bfP \to \vect$.
It sends $y \in \bfP$ to $\k[\bfP](x,y)=\k\cdot \pi_{y \ge x}$ if
$x \le y$ and to $0$ otherwise. For a relation $y \le z$, the induced map
\[
\k[\bfP](x,y)\longrightarrow \k[\bfP](x,z)
\]
sends $\pi_{y \ge x}$ to $\pi_{z \ge x}$ when $x \le y$, and is $0$
otherwise.
By the Yoneda lemma there is an isomorphism
\[
\vect^\bfP(\k[\bfP]_x,M)\cong M(x)
\]
natural in $x \in \bfP$ and in $M \in \vect^\bfP$, showing that each
$\k[\bfP]_x$ is projective.
Moreover, since
\[
\vect^\bfP(\k[\bfP]_x,\k[\bfP]_x)\cong \k[\bfP](x,x) \cong \k
\]
is a local algebra, the projective module $\k[\bfP]_x$ is indecomposable.
Again by the Yoneda lemma, for any $\bfP$-module $L$ and any
$l \in L(x)$, there exists a unique morphism
\[
\al \colon \k[\bfP]_x \to L
\]
in $\vect^\bfP$ such that $\al(\id_x) = l$.
We denote this $\al$ by $\ro(l)$, which is defined by the ``right
multiplication'' by $l$: $g \mapsto g(l)$ for all
$g \in \k[\bfP](x,y)$ and $y \in \bfP$.

If $I$ is an interval of $\bfP$, then by definition, $V_I(x) = \k$ for
all $x \in I$. We write this $\k$ as $\k = \k 1_x$ by setting $1_x$ to
be the identity element of $\k$.
We set
\[
\ro(y \ge x):= \ro(\pi_{y \ge x}) \colon \k[\bfP]_y \to \k[\bfP]_x
\qquad\text{and}\qquad
\ro(x):= \ro(1_x) \colon \k[\bfP]_x \to V_I
\]
for short.
By convention, we set $\ro(y \ge x):= 0$ unless $y \ge x$ for all
$x, y \in \bfP$. Then we have
\begin{equation}
\label{eq:mor-betw-ind-proj}
\vect^\bfP(\k[\bfP]_y, \k[\bfP]_x) = \k\, \ro(y \ge x)
\end{equation}
for all $x, y \in \bfP$.
These notations are used in computations of projective resolutions such
as in Example~\ref{ex:bneck-chain}.

We write $\prj \k[\bfP]$ for the full subcategory of projective objects
in $\vect^\bfP$.

\begin{lemma}
\label{lem:ind-proj}
The set $\{\k[\bfP]_x \mid x\in\bfP\}$ is a complete set of
representatives of the isomorphism classes of indecomposable projective
$\bfP$-modules. Hence each $M\in\prj \k[\bfP]$ decomposes as
\[
M \cong \bigoplus_{i = 1}^n \k[\bfP]_{x_i}
\]
for a unique $|M|:= n \in \bbN$ and a unique multiset
\[
\Summands(M) = \mset{\k[\bfP]_{x_1}, \dots, \k[\bfP]_{x_n}},
\]
which are called the \emph{size} and the \emph{summand set} of $M$,
respectively.
(By convention, $n =0$ and $\Summands(M) = \varnothing$ if $M = 0$.)
\end{lemma}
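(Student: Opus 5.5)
We prove the two assertions in turn: first that every indecomposable projective is some $\kP{x}$, then existence and uniqueness of the decomposition. Pairwise non-isomorphism is immediate. If $\kP{x}\cong\kP{y}$, then evaluating at $x$ gives $\kP{y}(x)\ne 0$, so $y\le x$; symmetrically $x\le y$, hence $x=y$. Indecomposability was already observed above, since $\operatorname{End}(\kP{x})\cong\k$ is local.

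\textbf{Step 1: every $M\in\vect^\bfP$ is a quotient of a finite sum of the $\kP{x}$.} For each $x\in\bfP$ choose a basis $l_{x,1},\dots,l_{x,m_x}$ of $M(x)$; this is finite because $M(x)$ is finite dimensional. Consider the morphism
\[
\Ds_{x,j}\ro(l_{x,j})\colon \Ds_{x\in\bfP}\kP{x}^{m_x}\longrightarrow M .
\]
At each $y$, its image contains $\ro(l_{y,j})(\id_y)=l_{y,j}$ for every $j$, so the map is surjective. Because $\bfP$ is finite, the sum is finite.

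\textbf{Step 2: projectives are finite sums of the $\kP{x}$.} Let $M$ be projective. The epimorphism of Step 1 splits, so $M$ is a direct summand of $F=\Ds\kP{x}^{m_x}$. The category $\vect^\bfP$ is Krull--Schmidt (recalled above), and each $\kP{x}$ has local endomorphism ring. Krull--Schmidt therefore gives that any direct summand of $F$ is isomorphic to $\Ds_i\kP{x_i}$ for a sub-multiset of the summands of $F$. In particular, any indecomposable projective is isomorphic to some $\kP{x}$, which proves the first assertion.

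\textbf{Step 3: uniqueness.} Uniqueness of $n$ and of the multiset $\Summands(M)$ is exactly the uniqueness part of the Krull--Schmidt theorem. One can also argue directly: the multiplicity of $\kP{x}$ equals
\[
\dim_\k \operatorname{top}(M)(x)=\dim M(x)-\dim\sum_{y<x}\operatorname{Im}M(y\to x),
\]
and this quantity is an invariant of $M$. The only step needing care is Step 2, where the Krull--Schmidt property is invoked. Everything else is Yoneda bookkeeping.
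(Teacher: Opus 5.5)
Your proposal is correct and follows the same route the paper implicitly relies on. The paper states this lemma without proof and leans on the preceding remarks: each $\k[\bfP]_x$ is projective by Yoneda, it is indecomposable because its endomorphism ring is $\k$, and $\vect^\bfP$ is Krull--Schmidt. Your Steps~1--3 fill in exactly these details, including the check that the $\k[\bfP]_x$ are pairwise non-isomorphic, and your top-dimension count gives a valid alternative argument for uniqueness.
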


\begin{ntn}
\label{ntn:smd}
Let $M, N \in \vect^\bfP$.
For simplicity, we sometimes use $\smd(M):= \mset{x_1, \dots, x_n}$
instead of $\Summands(M)$, and identify $\smd(M)$ with $\Summands(M)$
by the bijection $x \mapsto \k[\bfP]_x$.
The set of all bijections $\smd(M) \to \smd(N)$ is denoted by
$\Bij(\smd(M), \smd(N))$ and is identified with the set
$\Bij(\Summands(M), \Summands(N))$ of bijections
$\Summands(M) \to \Summands(N)$ by regarding each
$B \in \Bij(\smd(M), \smd(N))$ as the bijection
\[
\k[\bfP]_x \mapsto B(\k[\bfP]_x):= \k[\bfP]_{B(x)}
\qquad\text{for all } x \in \smd(M).
\]

When we deal with morphisms by matrices, we need to fix an order of
elements of $\smd(M)$; thus in that case, we regard $\smd(M)$ as a
sequence $(x_1, \dots, x_n)$.
\end{ntn}

The following is immediate from \eqref{eq:mor-betw-ind-proj}.

\begin{lemma}
\label{lem:mor-betw-proj}
Let $\al \colon M \to N$ be a morphism in $\prj \k[\bfP]$, and assume
that
\[
M = \Ds_{x \in \smd(M)}\k[\bfP]_x
\qquad\text{and}\qquad
N = \Ds_{y \in \smd(N)}\k[\bfP]_y.
\]
Then there exists a unique matrix
\[
\Mat(\al):= [a_{x,y}]_{(y,x) \in \smd(N) \times \smd(M)}
\]
over $\k$, called the
\emph{coefficient matrix} of $\al$,
such that
\[
\al = [a_{x,y}\, \ro(x \ge y)]_{(y,x) \in \smd(N) \times \smd(M)},
\]
and that $a_{x,y} =0$ unless $x \ge y$.
\end{lemma}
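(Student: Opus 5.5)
The plan is to reduce everything to morphisms between indecomposable projectives, where \eqref{eq:mor-betw-ind-proj} already describes all morphisms, and then assemble these via the additivity of $\vect^\bfP$. Since $M$ and $N$ are given as finite direct sums, a morphism $\al\colon M\to N$ is the same as a family of components
\[
\al_{y,x}:=p_y\circ\al\circ\iota_x\colon \k[\bfP]_x\longrightarrow \k[\bfP]_y
\qquad\bigl((y,x)\in\smd(N)\times\smd(M)\bigr),
\]
where $\iota_x$ and $p_y$ denote the canonical injections and projections. Conversely, $\al=\sum_{x,y}\iota_y\,\al_{y,x}\,p_x$, so $\al$ is written uniquely as the matrix $[\al_{y,x}]$. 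Here I index by positions in the sequences $\smd(M)$, $\smd(N)$ as in Notation~\ref{ntn:smd}, so repeated points cause no ambiguity.

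Next, by \eqref{eq:mor-betw-ind-proj} applied to the pair $(x,y)$, each $\al_{y,x}$ lies in $\k\,\ro(x\ge y)$. So there is a scalar $a_{x,y}$ with $\al_{y,x}=a_{x,y}\,\ro(x\ge y)$. This gives existence of $\Mat(\al)$.

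For uniqueness and the vanishing condition, there are two cases.
\begin{itemize}
\item If $x\ge y$, then $\ro(x\ge y)=\ro(\pi_{x\ge y})$ sends $\id_x$ to $\pi_{x\ge y}\neq0$, so it is a nonzero morphism. Hence $\al_{y,x}$ determines the scalar $a_{x,y}$ uniquely.
\item If $x\not\ge y$, the convention sets $\ro(x\ge y)=0$. The scalar is then not determined by $\al_{y,x}$, and we force uniqueness by requiring $a_{x,y}=0$, exactly as the statement stipulates.
\end{itemize}
Together with the uniqueness of the matrix decomposition of $\al$, this proves that $\Mat(\al)$ exists and is unique.

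There is no real obstacle. The only point needing care is the degenerate case $x\not\ge y$, where the scalar is not determined by $\al$ and uniqueness must come from the normalization $a_{x,y}=0$. One should also check that \eqref{eq:mor-betw-ind-proj} is a one-dimensional space exactly when $x\ge y$, which follows from the Yoneda isomorphism $\vect^\bfP(\k[\bfP]_x,\k[\bfP]_y)\cong\k[\bfP](y,x)$.
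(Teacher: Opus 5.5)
Your proposal is correct and matches the paper's approach; the paper simply calls the lemma immediate from \eqref{eq:mor-betw-ind-proj}. You decompose $\al$ into components $\k[\bfP]_x\to\k[\bfP]_y$, read off scalars from the one-dimensional (or zero) hom-spaces, and get uniqueness from the normalization $a_{x,y}=0$ for $x\not\ge y$, with indexing by positions to handle repeated summands being a precision the paper leaves implicit.
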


\begin{remark}
In the above, the coefficient matrix of $\al$ is uniquely determined
because of the \emph{normalization} condition that $a_{x,y} =0$ unless
$x \ge y$. Thanks to this condition, it holds that if $a_{x,y} \ne 0$,
then $x \ge y$.

Notice that $\Mat(\al)$ is obtained from the normalized matrix
expression of $\al$ by deleting $\ro(x \ge y)$ from each $(y,x)$-entry,
and that it is the transpose of the matrix
$[a_{x,y}]_{(x,y) \in \smd(M) \times \smd(N)}$:
\[
\Mat(\al) = {}^t\left([a_{x,y}]_{(x,y) \in \smd(M) \times \smd(N)}\right).
\]
\end{remark}

\subsection{Monotone Maps and Kan Extensions}
\label{ssec:mont-funct}

We now record how monotone maps between posets induce adjoint triples
between the corresponding module categories via Kan extensions. This
will be used repeatedly once we restrict to Galois connections.

A map $f:\bfQ\to\bfP$ of posets is \emph{monotone} if $x\le y$ in $\bfQ$ implies $f(x)\le f(y)$ in $\bfP$. Viewing posets as thin categories, a functor $\bfQ\to\bfP$ is precisely a monotone map. Thus a monotone map $f:\bfQ\to\bfP$ induces the restriction (precomposition) functor
\[
f^{*}:\vect^\bfP\to\vect^\bfQ,\qquad f^{*}N = N\circ f .
\]

\begin{proposition}
\label{prop:Kan-posets}
For any monotone $f:\bfQ\to\bfP$, the restriction functor $f^{*}$ is exact.
Moreover, the left and right Kan extensions along $f$ exist; we denote them by
$f_{!}:=\Lan_f$ and $f_{*}:=\Ran_f$. By the defining property of Kan extensions
there are natural adjunctions $f_{!}\dashv f^{*}\dashv f_{*}$, so $f_{!}$ is
right exact and $f_{*}$ is left exact.
\end{proposition}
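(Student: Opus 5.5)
The plan is to prove the three assertions in turn: exactness of $f^{*}$, existence of both Kan extensions via pointwise formulas, and then the adjunctions together with their exactness consequences.

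\emph{Exactness of $f^{*}$.} In $\vect^\bfP$ and $\vect^\bfQ$, kernels, cokernels and exactness are all computed pointwise. For a sequence $N'\to N\to N''$ in $\vect^\bfP$, the value of $f^{*}$ at $q\in\bfQ$ is the sequence $N'(f(q))\to N(f(q))\to N''(f(q))$. This sequence is exact whenever the original is exact at the point $f(q)$. Hence $f^{*}$ is exact.

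\emph{Existence of the Kan extensions.} I will use the pointwise formulas
\[
(f_{!}M)(p)=\operatorname*{colim}_{q\in\bfQ,\ f(q)\le p} M(q),
\qquad
(f_{*}M)(p)=\lim_{q\in\bfQ,\ f(q)\ge p} M(q).
\]
\begin{itemize}
\item The indexing subposets $\{q\mid f(q)\le p\}$ and $\{q\mid f(q)\ge p\}$ are finite, because $\bfQ$ is finite (assumed throughout, as for $\bfP$).
\item So these are finite (co)limits in $\vect$. They exist and are finite dimensional, since $\vect$ is closed under finite limits and colimits.
\item For $p\le p'$ the comma categories are nested: $\{f(q)\le p\}\subseteq\{f(q)\le p'\}$ and $\{f(q)\ge p'\}\subseteq\{f(q)\ge p\}$. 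This inclusion gives the induced maps, and functoriality follows from the universal properties.
\item By the standard theorem (Mac Lane, X.3), pointwise (co)limits over comma categories compute $\Lan_f$ and $\Ran_f$.
\end{itemize}

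\emph{Adjunctions and exactness.} The universal properties of Kan extensions give bijections
\[
\vect^\bfP(f_{!}M,N)\cong\vect^\bfQ(M,f^{*}N),
\qquad
\vect^\bfQ(f^{*}N,M)\cong\vect^\bfP(N,f_{*}M),
\]
natural in $M$ and $N$. These say exactly that $f_{!}\dashv f^{*}\dashv f_{*}$. Since left adjoints preserve colimits, in particular cokernels, $f_{!}$ is right exact. Dually, $f_{*}$ preserves kernels and is therefore left exact.

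The only step needing care is existence of the Kan extensions, and it hinges on the (co)limits being taken in $\vect$ rather than in all vector spaces. Finiteness of $\bfQ$ makes each indexing diagram finite, which keeps the result finite dimensional. The remaining points are formal consequences of the universal properties.
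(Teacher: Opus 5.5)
Your proof is correct and follows the same line as the paper. The paper gives no separate proof: it records the statement as standard, with the adjunctions coming from the defining property of Kan extensions and the one-sided exactness of $f_{!}$ and $f_{*}$ coming from their being adjoints. Your pointwise (co)limits over the subposets $\{q\mid f(q)\le p\}$ and $\{q\mid f(q)\ge p\}$ supply the existence step the paper leaves implicit, and you correctly identify the finiteness of $\bfQ$ as what keeps the values in $\vect$.
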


\[
\begin{tikzcd}[row sep=3em, column sep=6em]
\Nname{E}\vect^\bfQ & \Nname{C}\vect^\bfP
\arrow[from={E}, to={C}, "f_{!}", ""'{name=Fu}, bend left]
\arrow[from={C}, to={E},"f^{*}"{description, pos=.5, name=Fc}]
\arrow[from={E}, to={C}, "f_{*}"', ""'{name=Fd}, bend right]
\arrow[from={Fu}, to={Fc}, "\rotatebox{-90}{$\dashv$}" description, phantom]
\arrow[from={Fc}, to={Fd}, "\rotatebox{-90}{$\dashv$}" description, phantom]
\end{tikzcd}
\]

\subsection{Galois Connections}

We recall that Galois connections are adjunctions between posets and underlie the identifications among $f_{!}, f^{*}, f_{*}$ used later.

\begin{definition}
\label{dfn:Gal-conn}
A \emph{Galois connection} between posets $\bfQ,\bfP$ consists of monotone maps $f:\bfQ\to\bfP$ and $g:\bfP\to\bfQ$ such that
\[
f(u)\le x \iff u\le g(x)\qquad(u\in \bfQ,\ x\in \bfP).
\]
Equivalently, viewing $\bfQ$ and $\bfP$ as thin categories, this means $f\dashv g$ as functors. We write $f:\bfQ\rightleftarrows \bfP:g$, with $f$ left adjoint and $g$ right adjoint.
\end{definition}

The following characterization of a Galois connection is corresponding to
that of an adjunction using a unit and a counit.
Since the verification is straightforward, we omit the proof.

\begin{lemma}
\label{lem:Gal-conn}
In the setting of Definition \ref{dfn:Gal-conn},
$f:\bfQ\rightleftarrows \bfP:g$ is a Galois connection if and only if
\[
u \le gf(u) \quad (u \in \bfQ),\quad \text{and}\quad fg(x) \le x \quad (x \in \bfP).
\]
If this is the case, we have $fgf = f$ and $gfg = g$.
\end{lemma}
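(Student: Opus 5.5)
The plan is to check the two directions of the equivalence separately, and then derive the two identities from the unit and counit inequalities together with monotonicity.

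\emph{Forward direction.} Assume $f:\bfQ\rightleftarrows\bfP:g$ is a Galois connection in the sense of Definition~\ref{dfn:Gal-conn}.
\begin{itemize}
\item For $u\in\bfQ$, put $x=f(u)$. Reflexivity gives $f(u)\le f(u)$, and the defining equivalence turns this into $u\le gf(u)$.
\item For $x\in\bfP$, put $u=g(x)$. Then $g(x)\le g(x)$ yields $fg(x)\le x$ in the same way.
\end{itemize}

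\emph{Converse direction.} Assume both inequalities hold, with $f$ and $g$ monotone.
\begin{itemize}
\item If $f(u)\le x$, apply the monotone map $g$ to get $gf(u)\le g(x)$. Combined with $u\le gf(u)$, transitivity gives $u\le g(x)$.
\item If $u\le g(x)$, apply $f$ to get $f(u)\le fg(x)$. Combined with $fg(x)\le x$, this gives $f(u)\le x$.
\end{itemize}
So the defining equivalence holds and $f\dashv g$.

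\emph{The identities.} These rest on antisymmetry in $\bfP$ and $\bfQ$.
\begin{itemize}
\item For $fgf=f$: applying the monotone map $f$ to $u\le gf(u)$ gives $f(u)\le fgf(u)$. Applying the counit inequality at the point $x=f(u)$ gives $fg(f(u))\le f(u)$. Antisymmetry then yields $fgf(u)=f(u)$.
\item For $gfg=g$: the unit inequality at $u=g(x)$ gives $g(x)\le gfg(x)$. Applying the monotone map $g$ to $fg(x)\le x$ gives $gfg(x)\le g(x)$. Antisymmetry yields $gfg(x)=g(x)$.
\end{itemize}

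There is no real obstacle here. The only point needing care is that $f$ and $g$ are assumed monotone in both directions of the equivalence, since monotonicity is used essentially in the converse. The identities, in turn, depend on the order relations being antisymmetric, which holds because $\bfP$ and $\bfQ$ are posets rather than preorders.
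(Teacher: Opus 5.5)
Your proof is correct. It is the routine unit/counit verification that the paper omits as straightforward, noting only that the lemma mirrors the unit/counit characterization of an adjunction. Your use of monotonicity in the converse direction and of antisymmetry for $fgf=f$ and $gfg=g$ is exactly what that verification requires.
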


The existence of a natural transformation between monotone maps as functors is
verified by the following.
As it is easy to show, we omit the proof.

\begin{lemma}
Let $f, g \colon \bfP \to \bfQ$ be monotone maps of posets.
Then there exists a natural transformation $f \To g$ if and only if
$f(x) \le g(x)$ for all $x \in \bfP$.
The latter condition is expressed by writing $f \le g$.
\end{lemma}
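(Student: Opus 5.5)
The plan is to prove both directions of the equivalence directly, using only that $\bfP$ and $\bfQ$ are thin categories.

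\emph{From a natural transformation to $f\le g$.} Suppose $\theta\colon f\To g$ is a natural transformation. For each $x\in\bfP$, its component $\theta_x$ is a morphism $f(x)\to g(x)$ in the thin category $\bfQ$. Such a morphism exists exactly when $f(x)\le g(x)$. Since this holds for every $x$, we get $f\le g$.

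\emph{From $f\le g$ to a natural transformation.} Suppose $f(x)\le g(x)$ for all $x\in\bfP$. For each $x$, define $\theta_x\colon f(x)\to g(x)$ to be the unique morphism $\pi_{g(x)\ge f(x)}$ from \eqref{eq:mor-in-poset}. It remains to check naturality. Take $x\le y$ in $\bfP$. The two composites $g(\pi_{y\ge x})\circ\theta_x$ and $\theta_y\circ f(\pi_{y\ge x})$ are both morphisms $f(x)\to g(y)$ in $\bfQ$. Since $\bfQ$ is thin, there is at most one such morphism, so the two composites are equal and the square commutes.

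The main step is just noticing that thinness makes naturality automatic and reduces the existence of each component to an inequality. Beyond that there is no real obstacle. I would also note in passing that such a $\theta$ is unique, again because $\bfQ$ is thin.
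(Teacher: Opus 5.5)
Your proof is correct and is the standard argument the paper has in mind. The paper omits the proof as easy, and its later definitions of $\ovl{\al}$ and $\Int\al$ use your identification $\al_x=\pi_{g(x)\ge f(x)}$, with naturality automatic from the thinness of $\bfQ$.
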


To deal with pullbacks in the category of posets, we use the following.

\begin{lemma}
\label{lem:leq-pb}
Consider a diagram
$$
\begin{tikzcd}
\bfS & \bfR & \bfQ_2\\
& \bfQ_1 & \bfP
\Ar{1-1}{1-2}{"f", yshift=3pt}
\Ar{1-1}{1-2}{"g"', yshift=-3pt}
\Ar{1-2}{1-3}{"\pi_2"}
\Ar{1-2}{2-2}{"\pi_1"'}
\Ar{2-2}{2-3}{"\si_1"'}
\Ar{1-3}{2-3}{"\si_2"}
\end{tikzcd}
$$
with a pullback square in the category of posets.
Then
\begin{enumerate}[label=(\arabic*)]
\item 
If $\pi_i f = \pi_i g$ for all $i=1,2$, then $f = g$.
\item 
If $\pi_i f \le \pi_i g$ for all $i=1,2$, then $f \le g$.
\end{enumerate}
\end{lemma}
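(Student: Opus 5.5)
The plan is to use the concrete description of pullbacks in the category of posets. Then (1) reduces to the universal property, and (2) reduces to a pointwise check.

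First I recall that the pullback of $\si_1\colon\bfQ_1\to\bfP$ and $\si_2\colon\bfQ_2\to\bfP$ can be taken to be
\[
\bfR' := \{(q_1,q_2)\in\bfQ_1\times\bfQ_2 \mid \si_1(q_1)=\si_2(q_2)\},
\]
with the componentwise (product) order and the two projections. Given any cone $(\bfT,a_1,a_2)$ with $\si_1a_1=\si_2a_2$, the map $t\mapsto(a_1(t),a_2(t))$ is monotone, lands in $\bfR'$, and is the unique such factorization. Since $\bfR$ is a pullback, there is a unique isomorphism of posets $\theta\colon\bfR\to\bfR'$ with $\mathrm{pr}_i\theta=\pi_i$. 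Because $\theta$ is an order isomorphism, its inverse is monotone. In particular $\theta$ both preserves and reflects the order, so for $r,r'\in\bfR$,
\[
r\le r' \iff \pi_1(r)\le\pi_1(r') \text{ and } \pi_2(r)\le\pi_2(r').
\]
This shows that $(\pi_1,\pi_2)$ jointly reflect the order. This is the key structural fact.

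For (1), I argue directly from the universal property. The maps $\pi_1f$ and $\pi_2f$ form a cone over the cospan, since $\si_1\pi_1f=\si_2\pi_2f$, and $f$ is a factorization of this cone through $\bfR$. If $\pi_if=\pi_ig$ for $i=1,2$, then $g$ is another factorization of the same cone, so uniqueness gives $f=g$. Alternatively, (1) follows from (2) applied in both directions together with antisymmetry of $\bfR$.

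For (2), fix $s\in\bfS$ and put $r=f(s)$ and $r'=g(s)$. The hypotheses give $\pi_1(r)\le\pi_1(r')$ and $\pi_2(r)\le\pi_2(r')$. By the order-reflection property above, $r\le r'$, that is, $f(s)\le g(s)$. Since $s$ was arbitrary, $f\le g$.

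The only point needing care is the order-reflection claim. It is easy to overlook that a pullback $\bfR$ could in principle carry a coarser order than the product order. I rule this out via the explicit model: $\bfR'$ with the product order is itself a pullback, so $\theta$ is an isomorphism in the category of posets, and its inverse is monotone.
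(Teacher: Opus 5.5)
Your proposal is correct and matches the paper's proof: (1) follows from uniqueness in the universal property, and (2) follows from the concrete model of the pullback as the subposet $\{(q_1,q_2)\mid \si_1(q_1)=\si_2(q_2)\}$ of $\bfQ_1\times\bfQ_2$ with the product order, where the pointwise inequality is immediate. Your extra step, transporting the order-reflection property along the canonical isomorphism $\theta$ to an arbitrary pullback $\bfR$, just makes explicit the paper's tacit identification of $\bfR$ with this model.
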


\begin{proof}
(1) This is immediate from the universality of the pullback.

(2) Recall that $\bfR$ is constructed as the full subposet of $\bfQ_1 \times \bfQ_2$
with the underlying set given by
\[
\bfR \ :=\ \bigl\{(q_1,q_2)\in\bfQ_1\times\bfQ_2 : \si_1(q_1)=\si_2(q_2)\bigr\},
\]
with projections $\pi_1:\bfR\to\bfQ_1$ and $\pi_2:\bfR\to\bfQ_2$,
where the partial order on $\bfQ_1 \times \bfQ_2$ is defined as follows:
For any $(q_1, q_2), (q'_1, q'_2) \in \bfQ_1 \times \bfQ_2$,
$(q_1, q_2) \le (q'_1, q'_2)$ if and only if $q_i \le q'_i$ for all $i=1,2$.
Then the assertion is clear from this definition.
\end{proof}

\begin{corollary}
If $f:\bfQ\rightleftarrows \bfP:g$ is a Galois connection, then $g^{*}\dashv f^{*}$.
\[
\begin{tikzcd}[row sep=3em]
\Nname{E}\vect^\bfQ & \Nname{C} \vect^\bfP 
\arrow[from={E}, to={C}, "g^{*}", ""'{name=Gs}, bend left]
\arrow[from={C}, to={E}, "f^{*}"{name=Fs}, bend left]
\arrow[from={Gs}, to={Fs}, "\rotatebox{-90}{$\dashv$}" description, phantom]
\end{tikzcd}.
\]
\end{corollary}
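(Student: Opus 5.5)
The plan is to apply the $2$-functor of Proposition~\ref{prp:Yoneda-2-functor}, in the form recorded in Remark~\ref{rmk:contravariant-adj}, with $\calC=\vect$. First, I use Definition~\ref{dfn:Gal-conn} to view the Galois connection $f:\bfQ\rightleftarrows\bfP:g$ as an adjunction $f\dashv g$ between thin categories. Concretely, Lemma~\ref{lem:Gal-conn} supplies the unit $\eta:\id_\bfQ\To gf$, coming from $u\le gf(u)$, and the counit $\varepsilon:fg\To\id_\bfP$, coming from $fg(x)\le x$. The triangle identities hold automatically, because all parallel morphisms in a poset coincide.

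Next, I fix a universe $\frkU$ for which $\vect$, $\bfP$ and $\bfQ$ are $\frkU$-small. Then $\vect^\bfP=\Cat^\frkU(\bfP,\vect)$ and $\vect^\bfQ=\Cat^\frkU(\bfQ,\vect)$, so precomposition is exactly the action of the $2$-functor $\Cat^\frkU(\blank,\vect)$. Because that $2$-functor is contravariant on $1$-morphisms, it sends the adjoint system $(f,g,\eta,\varepsilon)$ to $(g^*,f^*,\eta^*,\varepsilon^*)$. Here $g^*:\vect^\bfQ\to\vect^\bfP$ is the left adjoint and $f^*:\vect^\bfP\to\vect^\bfQ$ is the right adjoint. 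The unit is $\varepsilon^*:\id\To f^*g^*=(gf)^*$ and the counit is $\eta^*:g^*f^*=(fg)^*\To\id$. The triangle identities transfer because $2$-functors preserve horizontal and vertical composition of $2$-cells.

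As a sanity check, or as a direct alternative, I would verify the bijection explicitly. A natural transformation $\phi:g^*\Gamma\To N$ has components $\Gamma(g(x))\to N(x)$. Its transpose $\psi:\Gamma\To f^*N$ is $\psi_u:=\phi_{f(u)}\circ\Gamma(u\le gf(u))$. Conversely, given $\psi$, I set $\phi_x:=N(fg(x)\le x)\circ\psi_{g(x)}$. Checking that these two constructions are mutually inverse uses only functoriality and the identities $fgf=f$ and $gfg=g$ from Lemma~\ref{lem:Gal-conn}.

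I do not expect a serious obstacle. The only point needing care is the variance bookkeeping: the order of the adjoints reverses, so $g^*\dashv f^*$ rather than $f^*\dashv g^*$. The other point is ensuring the functor categories are taken in a universe where $\vect$ is small, as stipulated in Remark~\ref{rmk:contravariant-adj}.
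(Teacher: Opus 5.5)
Your argument is correct and is the paper's own: the paper's proof is just the one-line appeal to the contravariant $2$-functoriality of $\Cat^\frkU(\blank,\vect)$ (Proposition~\ref{prp:Yoneda-2-functor}, as packaged in Remark~\ref{rmk:contravariant-adj}), and your explicit hom-set bijection is a valid independent check. One labelling slip: this $2$-functor is covariant on $2$-cells, so the unit of $g^*\dashv f^*$ is $\eta^*\colon \id\To f^*g^*=(gf)^*$ and the counit is $\varepsilon^*\colon g^*f^*=(fg)^*\To\id$, not the other way round as you wrote (your formulas for $\psi_u$ and $\phi_x$ already use $\eta$ and $\varepsilon$ in these correct roles).
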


\begin{proof}
Immediate from the contravariant $2$-functoriality of $\Cat^\frkU(\mbox{-},\vect)$ (e.g.\ Proposition~4.5.4 in \cite{JY}).
\end{proof}

\begin{corollary}
\label{cor:3_adj}
In the same setting, there are natural isomorphisms $g^{*}\cong f_{!}$ and $f^{*}\cong g_{*}$. In particular, the adjoint pairs
\[
g^{*}\dashv f^{*},\qquad f_{!}\dashv f^{*},\qquad g^{*}\dashv g_{*}
\]
are compatible via these isomorphisms.
\[
\begin{tikzcd}[row sep=50pt, column sep = 70pt]
\Nname{E}\vect^\bfQ & \Nname{C} \vect^\bfP 
\arrow[from={E}, to={C}, "g^{*} \cong f_{!}"{description, pos=.5, name=gs}, bend left=15pt]
\arrow[from={C}, to={E}, "f^{*}\cong g_{*}"{description, pos=.5, name=fs}, bend left=15pt]
\arrow[from={C}, to={E}, "g_{!}"', ""{name=gL}, bend right=80pt]
\arrow[from={E}, to={C}, "f_{*}"', ""{name=fR}, bend right=80pt]
\arrow[from={gL}, to={gs}, "\rotatebox{-90}{$\dashv$}" description, phantom]
\arrow[from={gs}, to={fs}, "\rotatebox{-90}{$\dashv$}" description, phantom]
\arrow[from={fs}, to={fR}, "\rotatebox{-90}{$\dashv$}" description, phantom]
\end{tikzcd}.
\]
\end{corollary}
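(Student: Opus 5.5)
The plan is to get both isomorphisms from uniqueness of adjoints. The previous corollary gives $g^{*}\dashv f^{*}$, and Proposition~\ref{prop:Kan-posets} gives $f_{!}\dashv f^{*}$. Thus $g^{*}$ and $f_{!}$ are both left adjoint to $f^{*}$. Uniqueness of left adjoints up to unique natural isomorphism then yields $g^{*}\cong f_{!}$, compatible with the units and counits.

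Dually, the previous corollary says $f^{*}$ is right adjoint to $g^{*}$, and Proposition~\ref{prop:Kan-posets} applied to the monotone map $g:\bfP\to\bfQ$ gives $g^{*}\dashv g_{*}$. Uniqueness of right adjoints gives $f^{*}\cong g_{*}$, again as an isomorphism of adjunctions. The same proposition applied to $g$ and to $f$ supplies the outer adjoints $g_{!}\dashv g^{*}$ and $f^{*}\dashv f_{*}$. Transporting these along the isomorphisms just found produces the displayed chain of four adjunctions.

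For the compatibility claim, the canonical isomorphism $\theta\colon g^{*}\To f_{!}$ is the unique natural transformation whose composite with the hom-set bijections carries the adjunction $g^{*}\dashv f^{*}$ onto $f_{!}\dashv f^{*}$. Concretely, $\theta$ is the mate of $\id_{f^{*}}$: it is the composite $g^{*}\To g^{*}f^{*}f_{!}\To f_{!}$, built from the unit of $f_{!}\dashv f^{*}$ followed by the counit of $g^{*}\dashv f^{*}$. The inverse is the symmetric composite. One checks via the triangle identities that the two are mutually inverse, which is standard. The $f^{*}\cong g_{*}$ case is identical with the roles of left and right swapped.

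Optionally, I would add a sanity check that makes the isomorphism concrete. For $M\in\vect^\bfQ$, the pointwise formula for left Kan extensions gives
\[
(f_{!}M)(x)=\operatorname{colim}_{u\in\bfQ,\ f(u)\le x}M(u).
\]
By the Galois condition, the indexing poset $\{u : f(u)\le x\}=\{u : u\le g(x)\}$ has terminal element $g(x)$. Hence the colimit is $M(g(x))=(g^{*}M)(x)$, naturally in $x$ and in $M$.

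No step is a real obstacle. The only point needing care is bookkeeping: the isomorphisms must be the canonical ones, so that the adjunctions, and not merely the functors, agree.
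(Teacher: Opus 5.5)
Your proposal is correct and takes the same route as the paper. Both isomorphisms come from uniqueness of adjoints, since $g^{*}$ and $f_{!}$ are both left adjoint to $f^{*}$, and $f^{*}$ and $g_{*}$ are both right adjoint to $g^{*}$; your explicit mate construction and the pointwise colimit check (the index set $\{u : f(u)\le x\}$ has terminal element $g(x)$) are correct details that the paper leaves out.
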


\begin{proof}
By uniqueness of adjoints in a $2$-category: $g^{*}$ and $f_{!}$ are both left adjoint to $f^{*}$, hence canonically isomorphic; dually, $f^{*}$ and $g_{*}$ are both right adjoint to $g^{*}$, hence canonically isomorphic.
\end{proof}

\begin{corollary}
\label{cor:g-ast-ex}
If $f:\bfQ\rightleftarrows \bfP:g$ is a Galois connection, then $g_{*}:\vect^\bfP\to\vect^\bfQ$ is exact.
\end{corollary}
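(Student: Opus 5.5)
By Corollary~\ref{cor:3_adj}, the Galois connection $f:\bfQ\rightleftarrows\bfP:g$ gives a natural isomorphism $g_{*}\cong f^{*}$. So it suffices to show that $f^{*}:\vect^\bfP\to\vect^\bfQ$ is exact, and then transport exactness across this isomorphism.

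Exactness of $f^{*}$ is exactly the first assertion of Proposition~\ref{prop:Kan-posets}. It can also be seen directly. In $\vect^\bfP$, kernels, cokernels and exactness are all computed pointwise. Given a short exact sequence $0\to L\to M\to N\to 0$ of $\bfP$-modules, the sequence obtained by evaluating $f^{*}$ at each $u\in\bfQ$ is
\[
0\to L(f(u))\to M(f(u))\to N(f(u))\to 0,
\]
which is already exact in $\vect$. Hence $f^{*}$ preserves short exact sequences.

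Finally, a functor naturally isomorphic to an exact functor is exact. Indeed, the natural isomorphism gives, for any short exact sequence, a commutative ladder whose vertical maps are isomorphisms. Since exactness of a sequence is invariant under isomorphism of sequences, $g_{*}$ sends short exact sequences to short exact sequences.

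No step is a real obstacle here. The only point needing care is that the isomorphism in Corollary~\ref{cor:3_adj} is natural, so that the comparison ladder commutes; this holds because that isomorphism comes from uniqueness of adjoints.
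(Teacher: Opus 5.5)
Your proposal is correct and follows the paper's own proof: it identifies $g_{*}\cong f^{*}$ via Corollary~\ref{cor:3_adj} and uses exactness of $f^{*}$ from Proposition~\ref{prop:Kan-posets}. Your pointwise check of $f^{*}$ and the observation that exactness transfers along a natural isomorphism only spell out details the paper leaves implicit.
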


\begin{proof}
Using Corollary~\ref{cor:3_adj}, $g_{*}\cong f^{*}$, and $f^{*}$ is exact by Proposition~\ref{prop:Kan-posets}.
\end{proof}

Combining Corollary~\ref{cor:g-ast-ex} with Lemma~\ref{lem:pres-proj} yields:

\begin{proposition}\label{prop:pullback_preserves_projectives}
Let $f:\bfQ\rightleftarrows \bfP:g$ be a Galois connection. If $M\in\vect^\bfQ$ is projective, then $g^{*}(M)$ is projective in $\vect^\bfP$.\qed
\end{proposition}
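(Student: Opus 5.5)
The plan is to apply Lemma~\ref{lem:pres-proj} with $L=g^{*}\colon\vect^\bfQ\to\vect^\bfP$. That lemma needs $g^{*}$ to have an exact right adjoint, so the work is to identify such an adjoint.

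First, by Corollary~\ref{cor:3_adj}, the Galois connection $f\dashv g$ gives the adjunction $g^{*}\dashv f^{*}$. It also gives a natural isomorphism $f^{*}\cong g_{*}$, so the right adjoint of $g^{*}$ can be taken to be $f^{*}$, or equivalently $g_{*}$.

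Second, this right adjoint is exact. Corollary~\ref{cor:g-ast-ex} says $g_{*}$ is exact, and directly, $f^{*}$ is exact by Proposition~\ref{prop:Kan-posets}, since restriction along a monotone map is computed pointwise.

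Third, both $\vect^\bfQ$ and $\vect^\bfP$ are abelian functor categories into $\vect$, so the hypotheses of Lemma~\ref{lem:pres-proj} hold. The lemma then shows that $g^{*}$ sends projectives of $\vect^\bfQ$ to projectives of $\vect^\bfP$. In particular, $g^{*}(M)$ is projective whenever $M$ is. Concretely, for projective $M$ we have
\[
\vect^\bfP(g^{*}M,\blank)\cong\vect^\bfQ(M,f^{*}(\blank)),
\]
which is a composite of exact functors and hence exact.

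There is essentially no obstacle here. The only point needing care is the direction of the adjunction: it is $g^{*}$, not $f^{*}$, that is the left adjoint, which comes from contravariance in Remark~\ref{rmk:contravariant-adj}. One should also note that $\vect^\bfQ$ is finite-dimensional-valued, so it is the right ambient category and both module categories are abelian.

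As an optional sanity check, one can verify directly on representables that $g^{*}\k[\bfQ]_u\cong\k[\bfP]_{f(u)}$. Indeed, $g^{*}\k[\bfQ]_u(x)\neq0$ if and only if $u\le g(x)$, which holds if and only if $f(u)\le x$. The general case then follows by additivity and Lemma~\ref{lem:ind-proj}.
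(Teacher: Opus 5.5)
Your proposal is correct and matches the paper's argument. The paper likewise combines Corollary~\ref{cor:g-ast-ex}, exactness of the right adjoint $g_{*}\cong f^{*}$ of $g^{*}$, with Lemma~\ref{lem:pres-proj}. Your optional check on representables is essentially Lemma~\ref{lem:rt-adj-prj-ind}, which the paper proves later for the stability theorem.
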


\medskip

The next basic facts will be used tacitly; they are immediate from the adjunction $f\dashv g$ and we omit the proof.

\begin{lemma}\label{lem:galois-connection-basics}
Let $f:\bfQ\rightleftarrows \bfP:g$ be a Galois connection. Then:
\begin{enumerate}[label=(\arabic*)]
\item The following are equivalent: $f$ is surjective; $g$ is injective; $f\circ g=\id_{\bfP}$.
If this is the case, this Galois connection is called a {\em Galois insertion}.
(For the proof, use Lemma \ref{lem:Gal-conn}.)
\item For $x\in \bfP$,
\[
g(x)=\max\{\,u\in \bfQ\mid f(u)\le x\,\}.
\]
In particular, if $f\circ g=\id_{\bfP}$, then
\[
g(x)=\max\{\,u\in \bfQ\mid f(u)=x\,\}.
\]
\end{enumerate}
\end{lemma}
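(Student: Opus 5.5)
The plan is to use only the characterization of Lemma~\ref{lem:Gal-conn}: $u\le gf(u)$ for all $u\in\bfQ$, $fg(x)\le x$ for all $x\in\bfP$, and consequently $fgf=f$ and $gfg=g$. The proof splits into the cycle of equivalences in (1), followed by the maximum formulas in (2).

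For (1), I would argue as follows.
\begin{itemize}
\item \emph{$f$ surjective $\Rightarrow$ $fg=\id_\bfP$.} Given $x\in\bfP$, choose $u$ with $f(u)=x$. Then $fg(x)=fgf(u)=f(u)=x$.
\item \emph{$fg=\id_\bfP$ $\Rightarrow$ $f$ surjective and $g$ injective.} Surjectivity is clear, since $x=f(g(x))$. For injectivity, $g(x)=g(x')$ gives $x=fg(x)=fg(x')=x'$.
\item \emph{$g$ injective $\Rightarrow$ $fg=\id_\bfP$.} From $gfg=g$ we get $g(fg(x))=g(x)$, and injectivity yields $fg(x)=x$.
\end{itemize}
These three implications close the cycle.

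For the first formula in (2), note that $g(x)$ lies in the set $\{u\mid f(u)\le x\}$, because $fg(x)\le x$. Conversely, if $f(u)\le x$, the Galois connection condition of Definition~\ref{dfn:Gal-conn} gives $u\le g(x)$. So $g(x)$ is an upper bound of the set that belongs to it, hence its maximum.

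For the second formula, assume $fg=\id_\bfP$. Then $f(g(x))=x$, so $g(x)$ lies in the smaller set $\{u\mid f(u)=x\}$. This set is contained in $\{u\mid f(u)\le x\}$, whose maximum is $g(x)$, so every element is $\le g(x)$. Hence $g(x)$ is again the maximum.

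No step is a genuine obstacle. The only place needing care is injectivity of $g$, where one must go through $gfg=g$ rather than through the unit inequality $u\le gf(u)$.
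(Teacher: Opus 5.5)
Your proposal is correct and is essentially the argument the paper has in mind: the paper omits the proof, saying only that these facts are immediate from the adjunction $f\dashv g$ and pointing to Lemma~\ref{lem:Gal-conn} for (1). You use exactly those ingredients, namely $fgf=f$ and $gfg=g$ for the cycle of equivalences in (1), and $fg(x)\le x$ together with the implication $f(u)\le x\Rightarrow u\le g(x)$ for the maximum formulas in (2).
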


\section{G\"ulen--McCleary Distance}
\label{sec:galois-transport}

Fix a finite poset $\bfP$ equipped with a metric $d_{\bfP}$. Motivated by optimal transport, we compare $\bfP$-modules by \emph{transporting} them through a common “apex” poset $\bfQ$ via \emph{Galois insertions}.

\begin{definition}{\cite[Definition~6.1]{GulenMcCleary}}
\label{def:coupling-insertion}
Let $M,N\in\vect^{\bfP}$. A \emph{Galois coupling} $(\bfQ, f \dashv g, h \dashv i, \Gamma)$
of the pair $(M, N)$ consists of a finite poset $\bfQ$, two Galois insertions
\[
f:\bfQ\rightleftarrows \bfP:g,
\qquad
h:\bfQ\rightleftarrows \bfP:i
\quad\text{with}\quad
f\circ g=\id_{\bfP}=h\circ i,
\]
and a module $\Gamma\in\vect^{\bfQ}$ such that $g^{*}\Gamma \cong M$ and $i^{*}\Gamma \cong N$. Equivalently (Corollary~\ref{cor:3_adj}), $M\cong f_{!}\Gamma$ and $N\cong h_{!}\Gamma$.
\end{definition}

\[
\begin{tikzcd}
& \bfQ \ar[dd,"\Gamma"] \ar[dr, bend left, "h"{name=H}] \ar[dl, bend right, "f"'{name=F}] & \\
\bfP \ar[rd,"M"'] \ar[ru, bend right, "g"'{name=G}] && \bfP \ar[ld,"N"] \ar[lu, bend left, "i"{name=I}] \\
& \vect &
\arrow[phantom, from=F, to=G, "\rotatebox{-45}{$\dashv$}" description]
\arrow[phantom, from=H, to=I, "\rotatebox{-135}{$\dashv$}" description]
\end{tikzcd}
\]

\begin{definition}{\cite[Definition~6.1]{GulenMcCleary}}
\label{def:cost}
The \emph{cost} of a coupling $(\bfQ,f\dashv g,\ h\dashv i,\ \Gamma)$ is
\[
\cost(\Gamma):=\sup_{q\in\bfQ} d_{\bfP}\big(f(q),\,h(q)\big).
\]
\end{definition}

The notion of a Galois coupling and its associated cost was introduced by G\"ulen and McCleary~\cite{GulenMcCleary, Guelen2024}, who allow the two modules to be indexed by different posets. In this paper, we restrict attention to the case of a single finite metric poset $\bfP$. In the next two subsections, we use Galois couplings to
introduce a metric defined by G\"ulen and McCleary
that we call the G\"ulen--McCleary distance after them to distinguish it from classical interleaving distance. 
G\"ulen and McCleary prove in~\cite[Proposition 6.10]{GulenMcCleary} and~\cite[Proposition 3.4.10]{Guelen2024} that this distance satisfies the triangle inequality.

\subsection{Composition of Couplings}
\label{ssec:composition}

We now show that Galois couplings compose.  
Suppose we are given couplings
\[
(\bfQ_1,\ f_1\dashv g_1,\ h_1\dashv i_1,\ \Gamma_1) \text{ of $(M,N)$ }
\qquad\text{and}\qquad
(\bfQ_2,\ f_2\dashv g_2,\ h_2\dashv i_2,\ \Gamma_2) \text{ of $(N,O)$},
\]
displayed below:
\[
\begin{tikzcd}
    && \bfR \ar[dr, bend left, "\pi_2"{name=piTwo}] 
           \ar[dl, bend right, "\pi_1"'{name=piOne}]  && \\
    & \bfQ_1 
        \ar[dddr, "\Gamma_1", near end, bend right = 20]
        \ar[ur, bend right, "\iota_1"'{name=iotaOne}]  
        \ar[dl, bend right, "f_1"'{name=fOne}]
        \ar[dr, bend left, "h_1"{name=hOne}]
        & & 
      \bfQ_2 
        \ar[ddld, "\Gamma_2"', bend left = 20, near end]
        \ar[ul, bend left, "\iota_2"{name=iotaTwo}]
        \ar[dl, bend right, "f_2"'{name=fTwo}]
        \ar[dr, bend left, "h_2"{name=hTwo}]
        & \\
    \bfP \ar[rrdd, "M"] 
        \ar[ur, bend right, "g_1"'{name=gOne}] 
      && 
    \bfP \ar[dd, "N"]  
        \ar[ul, bend left, "i_1"{name=iOne}] 
        \ar[ur, bend right, "g_2"'{name=gTwo}] 
      && 
    \bfP \ar[ldld, "O"] 
        \ar[ul, bend left, "i_2"{name=iTwo}] \\
    &  & &  & \\
    && \vect &&
    \arrow[phantom, from=piOne, to=iotaOne, "\rotatebox{-45}{$\dashv$}" description]
    \arrow[phantom, from=piTwo, to=iotaTwo, "\rotatebox{-135}{$\dashv$}" description]
    \arrow[phantom, from=fOne,  to=gOne,   "\rotatebox{-45}{$\dashv$}" description]
    \arrow[phantom, from=hOne,  to=iOne,   "\rotatebox{-135}{$\dashv$}" description]
    \arrow[phantom, from=fTwo,  to=gTwo,   "\rotatebox{-34}{$\dashv$}" description]
    \arrow[phantom, from=hTwo,  to=iTwo,   "\rotatebox{-135}{$\dashv$}" description]
\end{tikzcd}
\]
\vspace{-0.5em}

Let $\bfR$ be the pullback of $(h_1,f_2)$ in the category of posets:
$$
\begin{tikzcd}
\bfR & \bfQ_2\\
\bfQ_1 & \bfP
\Ar{1-1}{1-2}{"\pi_2"}
\Ar{1-1}{2-1}{"\pi_1"'}
\Ar{2-1}{2-2}{"h_1"'}
\Ar{1-2}{2-2}{"f_2"}
\end{tikzcd}.
$$

To show the triangle inequality, it is necessary to construct a module $\Psi\in\vect^{\bfR}$ whose pullbacks
recover $M$ and $O$ as in the following statement,
which is proved by Lemma 6.4 and by the proof of Proposition~6.10 in
\cite{GulenMcCleary}.
We therefore omit the proof here.

\begin{proposition}
\label{prop:compose-couplings}
There exists $\Psi\in\vect^{\bfR}$ and natural isomorphisms
\[
(\iota_1 g_1)^{*}\Psi \cong M,
\qquad
(\iota_2 i_2)^{*}\Psi \cong O,
\]
so that
\[
\bigl(\bfR,\ f_1\pi_1 \dashv \iota_1 g_1,\ 
\ h_2\pi_2 \dashv \iota_2 i_2,\ 
\ \Psi \bigr)
\]
is a Galois coupling of $(M,O)$.  
This composite is unique up to unique isomorphism.
\end{proposition}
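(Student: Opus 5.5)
The plan is to build everything explicitly: first the right adjoints $\iota_1,\iota_2$ of the projections, then the module $\Psi$ as a fibre product over the common middle module $N$.

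\textbf{Step 1: the right adjoints.} Recall $\bfR=\{(q_1,q_2)\mid h_1(q_1)=f_2(q_2)\}$ with the product order. Define
\[
\iota_1(q_1):=\bigl(q_1,\ g_2h_1(q_1)\bigr),\qquad \iota_2(q_2):=\bigl(i_1f_2(q_2),\ q_2\bigr).
\]
These land in $\bfR$ because $f_2g_2=\id_\bfP=h_1i_1$, and both maps are monotone.

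For $\pi_1\dashv\iota_1$, suppose $(q_1',q_2')\in\bfR$ and $q_1'\le q_1$. Using $q_2'\le g_2f_2(q_2')$ (Lemma~\ref{lem:Gal-conn}) we get
\[
q_2'\le g_2f_2(q_2')=g_2h_1(q_1')\le g_2h_1(q_1),
\]
so $(q_1',q_2')\le\iota_1(q_1)$. The converse is immediate. Moreover $\pi_1\iota_1=\id$, so this is a Galois insertion. The case of $\iota_2$ is symmetric.

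Adjunctions compose, so $f_1\pi_1\dashv\iota_1g_1$ and $h_2\pi_2\dashv\iota_2i_2$. Both are insertions, since $f_1\pi_1\iota_1g_1=f_1g_1=\id_\bfP$ and likewise $h_2\pi_2\iota_2i_2=\id_\bfP$.

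\textbf{Step 2: the module $\Psi$.} Fix isomorphisms $i_1^*\Gamma_1\cong N\cong g_2^*\Gamma_2$. By Corollary~\ref{cor:3_adj}, $h_{1!}\Gamma_1\cong N\cong f_{2!}\Gamma_2$. The units of $h_{1!}\dashv h_1^*$ and $f_{2!}\dashv f_2^*$ give morphisms
\[
\alpha:\Gamma_1\to h_1^*N,\qquad \beta:\Gamma_2\to f_2^*N.
\]
Pulling these back to $\bfR$ gives two maps into the same module $(h_1\pi_1)^*N=(f_2\pi_2)^*N$, namely
\[
\pi_1^*\alpha:\pi_1^*\Gamma_1\to(h_1\pi_1)^*N,\qquad \pi_2^*\beta:\pi_2^*\Gamma_2\to(f_2\pi_2)^*N.
\]
Define $\Psi$ as the fibre product of $\pi_1^*\alpha$ and $\pi_2^*\beta$ in $\vect^\bfR$, computed pointwise.

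\textbf{Step 3: restrictions of $\Psi$.} Restriction functors are exact (Proposition~\ref{prop:Kan-posets}), so they preserve fibre products. Hence $\iota_1^*\Psi$ is the fibre product of
\[
\Gamma_1\xrightarrow{\ \alpha\ }h_1^*N\xleftarrow{\ h_1^*(g_2^*\beta)\ }h_1^*g_2^*\Gamma_2,
\]
using $\pi_1\iota_1=\id$ and $\pi_2\iota_1=g_2h_1$.

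The key claim is that $g_2^*\beta:g_2^*\Gamma_2\to(f_2g_2)^*N=N$ is an isomorphism. I expect this to be the main obstacle. The point is that $g_2^*\beta$ should be exactly the canonical isomorphism $g_2^*\cong f_{2!}$ of Corollary~\ref{cor:3_adj}. To verify it, I would use the triangle identities for $f_2\dashv g_2$ together with $f_2g_2=\id$ (so that the counit is the identity), transported through the $2$-functor $\Cat^\frkU(\blank,\vect)$ of Remark~\ref{rmk:contravariant-adj}.

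Granting the claim, a fibre product along an isomorphism is canonically isomorphic to the other factor, so $\iota_1^*\Psi\cong\Gamma_1$. Therefore
\[
(\iota_1g_1)^*\Psi=g_1^*\iota_1^*\Psi\cong g_1^*\Gamma_1\cong M.
\]
Symmetrically, using $\pi_2\iota_2=\id$, $\pi_1\iota_2=i_1f_2$, and the fact that $i_1^*\alpha$ is an isomorphism, we get $\iota_2^*\Psi\cong\Gamma_2$ and hence $(\iota_2i_2)^*\Psi\cong O$. This shows that $(\bfR,\ f_1\pi_1\dashv\iota_1g_1,\ h_2\pi_2\dashv\iota_2i_2,\ \Psi)$ is a Galois coupling of $(M,O)$.

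\textbf{Step 4: uniqueness.} $\bfR$ is determined up to unique isomorphism by the pullback property, and $\iota_1,\iota_2$ are determined as right adjoints. Since $\Psi$ is defined by a limit, it is determined up to unique isomorphism by the data $(\Gamma_1,\Gamma_2,\alpha,\beta)$, which gives the uniqueness statement.
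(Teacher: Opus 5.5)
The paper gives no argument of its own here: it delegates both the poset-level part and the construction of $\Psi$ to G\"ulen--McCleary (their Lemma~6.4 and the proof of their Proposition~6.10). Your proposal is a correct, self-contained proof, so it works as an explicit replacement for that citation.

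What it buys is transparency. You write down and check the right adjoints $\iota_1(q_1)=(q_1,g_2h_1(q_1))$ and $\iota_2(q_2)=(i_1f_2(q_2),q_2)$. You realise $\Psi$ as the pointwise fibre product $\Psi(q_1,q_2)=\Gamma_1(q_1)\times_{N(h_1(q_1))}\Gamma_2(q_2)$, glued along $\Gamma_1(q_1)\to\Gamma_1(i_1h_1(q_1))\cong N(h_1(q_1))$ and $\Gamma_2(q_2)\to\Gamma_2(g_2f_2(q_2))\cong N(f_2(q_2))$.

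The single mechanism that makes the fibre product collapse is that, for a Galois insertion $h\dashv i$, the unit satisfies $\eta i=\id_i$. So the step you single out as the main obstacle is a one-line pointwise check. The component $(g_2^*\beta)_x$ is $\Gamma_2$ applied to $g_2(x)\le g_2f_2g_2(x)=g_2(x)$, i.e.\ the identity, followed by the fixed isomorphism $(g_2^*\Gamma_2)(x)\cong N(x)$. You can also skip the detour through $h_{1!}$ and $f_{2!}$. That detour needs the comparison isomorphism of Corollary~\ref{cor:3_adj} to respect units. Instead, define $\alpha$ and $\beta$ directly as the units of $i_1^*\dashv h_1^*$ and $g_2^*\dashv f_2^*$, composed with the fixed identifications.

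On uniqueness, your reading is the only one under which the statement holds: the limit is unique up to unique isomorphism once $(\Gamma_1,\Gamma_2,\alpha,\beta)$ is given. You should therefore say explicitly that the identifications $i_1^*\Gamma_1\cong N\cong g_2^*\Gamma_2$ are part of the data. Without them, $\Psi$ is not determined. Take $\bfP$ a point, $\bfQ_1=\{a<m\}$, $\bfQ_2=\{b<m'\}$ and $\Gamma_1=\Gamma_2=(\k\xrightarrow{e_1}\k^2)$. Twisting one identification by the swap of $e_1$ and $e_2$ changes $\dim\Psi(a,b)$ from $1$ to $0$, although both choices yield Galois couplings of $(M,O)$.
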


\subsection{Transport Distance}
\label{ssec:Galois_metric}

With composition available, we now
give a definition of a transport distance
due to G\"ulen and McCleary, 
and record its basic properties.

\begin{definition}{\cite[Definition~6.1]{GulenMcCleary}}
\label{def:transport-distance}
The \emph{G\"ulen--McCleary distance} between $M,N\in\vect^{\bfP}$ is
\[
\dGT(M,N):=\inf\{\ \cost(\Gamma)\ \mid\ \Gamma\ \text{is a Galois coupling of }(M,N)\ \}.
\]
If there is no Galois coupling between $M$ and $N$, set $\dGT(M,N)=\infty$.
\end{definition}

Now the following is easy to verify, and we just cite it without proof.

\begin{lemma}{\cite[Proposition~6.9]{GulenMcCleary}}
\label{lem:cost-subadditive}
If $\Gamma_1$ is a coupling for $(M,N)$ and $\Gamma_2$ is a coupling for $(N,O)$, and $\Psi$ is their composite from Proposition~\ref{prop:compose-couplings}, then
\[
\cost(\Psi)\ \le\ \cost(\Gamma_1)\ +\ \cost(\Gamma_2).
\]
\end{lemma}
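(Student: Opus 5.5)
We need to show that for the composite coupling $\Psi$ on $\bfR$ (the pullback of $h_1$ and $f_2$), with left maps $f_1\pi_1$ and $h_2\pi_2$,
\[
\sup_{r\in\bfR} d_\bfP\bigl(f_1\pi_1(r),h_2\pi_2(r)\bigr)\le \cost(\Gamma_1)+\cost(\Gamma_2).
\]
The plan is a pointwise estimate followed by taking suprema. It uses only the explicit description of $\bfR$ given in the proof of Lemma~\ref{lem:leq-pb}(2), together with the triangle inequality for the metric $d_\bfP$. The module $\Psi$ plays no role, because the cost depends only on the poset maps.

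First fix $r\in\bfR$. By that description, $r=(q_1,q_2)$ with $q_1\in\bfQ_1$, $q_2\in\bfQ_2$, $\pi_1(r)=q_1$, $\pi_2(r)=q_2$, and $h_1(q_1)=f_2(q_2)$. Set $x:=h_1(q_1)=f_2(q_2)\in\bfP$. The triangle inequality gives
\[
d_\bfP\bigl(f_1(q_1),h_2(q_2)\bigr)\le d_\bfP\bigl(f_1(q_1),x\bigr)+d_\bfP\bigl(x,h_2(q_2)\bigr)
= d_\bfP\bigl(f_1(q_1),h_1(q_1)\bigr)+d_\bfP\bigl(f_2(q_2),h_2(q_2)\bigr).
\]
By Definition~\ref{def:cost}, the first term is at most $\cost(\Gamma_1)$ and the second is at most $\cost(\Gamma_2)$.

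Finally, take the supremum over $r\in\bfR$; this yields the claim. If either cost is infinite, the inequality is trivial. If $\bfR$ is empty, the supremum is $0$ by convention (or $\bfR$ is nonempty anyway, since $h_1$ and $f_2$ are surjective).

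The only point needing care is that the left adjoints in the composite coupling are exactly $f_1\pi_1$ and $h_2\pi_2$, as stated in Proposition~\ref{prop:compose-couplings}. It is also essential that the defining pullback condition $h_1\pi_1=f_2\pi_2$ provides the common middle point $x$. Once these are in place, no real obstacle remains.
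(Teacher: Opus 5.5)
Your argument is correct. It is the routine verification the paper omits, since the paper only cites G\"ulen--McCleary, Proposition~6.9: for $r=(q_1,q_2)\in\bfR$, the pullback condition $h_1(q_1)=f_2(q_2)$ supplies the common midpoint, and the triangle inequality for $d_\bfP$ bounds $d_\bfP(f_1(q_1),h_2(q_2))$ by the two coupling costs. You are also right that $\cost(\Psi)$ depends only on the left adjoints $f_1\pi_1$ and $h_2\pi_2$, not on the module $\Psi$.
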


Using this G\"ulen and McCleary proved the following
(see \cite[Proposition~6.10]{GulenMcCleary} for the triangle inequality).

\begin{theorem}{\cite[Section~6]{GulenMcCleary}}
\label{thm:gt-pseudometric}
For a finite poset $\bfP$ with metric $d_{\bfP}$, the function
\[
\dGT:\ \mathrm{Ob}(\vect^{\bfP})\times \mathrm{Ob}(\vect^{\bfP})\longrightarrow [0,+\infty]
\]
is an extended pseudometric.
\end{theorem}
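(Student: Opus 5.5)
To show $\dGT$ is an extended pseudometric on $\mathrm{Ob}(\vect^{\bfP})$, I will verify four properties: values lie in $[0,\infty]$, $\dGT(M,M)=0$, symmetry, and the triangle inequality. Identifying isomorphic modules is not required at this stage.

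\emph{Nonnegativity.} Every cost is a supremum of values of $d_\bfP$, hence lies in $[0,\infty)$; here $\bfQ$ is finite, so the supremum is a maximum. By Definition~\ref{def:transport-distance}, an empty set of couplings gives $\infty$, so $\dGT$ takes values in $[0,\infty]$.

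\emph{Vanishing on the diagonal.} Take the trivial coupling $(\bfP,\id\dashv\id,\id\dashv\id,M)$. The identity is a Galois insertion, since $\id\circ\id=\id_\bfP$, and $\id^{*}M=M$ on both sides. Its cost is $\max_q d_\bfP(q,q)=0$, so $\dGT(M,M)=0$. More generally, the same construction with $\Gamma=M$ gives $\dGT(M,N)=0$ whenever $M\cong N$.

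\emph{Symmetry.} Given a coupling $(\bfQ,f\dashv g,h\dashv i,\Gamma)$ of $(M,N)$, swap the two insertions to get $(\bfQ,h\dashv i,f\dashv g,\Gamma)$. This is a coupling of $(N,M)$, because $i^{*}\Gamma\cong N$ and $g^{*}\Gamma\cong M$. Its cost is $\max_q d_\bfP(h(q),f(q))$, which equals the original cost by symmetry of $d_\bfP$. So the two sets of costs over which we take infima coincide, and $\dGT(M,N)=\dGT(N,M)$.

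\emph{Triangle inequality.} This is the only substantive step, and it rests on Proposition~\ref{prop:compose-couplings} and Lemma~\ref{lem:cost-subadditive}. If either $\dGT(M,N)$ or $\dGT(N,O)$ is $\infty$, there is nothing to prove. Otherwise fix $\epsilon>0$ and choose couplings $\Gamma_1$ of $(M,N)$ and $\Gamma_2$ of $(N,O)$ with
\[
\cost(\Gamma_i)<\dGT(M,N)+\epsilon \ \text{or}\ \dGT(N,O)+\epsilon
\]
respectively. Proposition~\ref{prop:compose-couplings} gives a composite coupling $\Psi$ of $(M,O)$ on the pullback poset $\bfR$. For this to be legitimate I must check three things:
\begin{enumerate}
\item $\bfR$ is finite, since it is a subposet of $\bfQ_1\times\bfQ_2$.
\item $f_1\pi_1\dashv\iota_1 g_1$ and $h_2\pi_2\dashv\iota_2 i_2$ are Galois insertions. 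These are composites of adjunctions, and surjectivity holds because $f_1\pi_1\iota_1g_1=f_1g_1=\id$, using $\pi_1\iota_1=\id$ as asserted by the proposition.
\item The cost bound holds, which is Lemma~\ref{lem:cost-subadditive}.
\end{enumerate}
Then
\[
\dGT(M,O)\le\cost(\Psi)\le\dGT(M,N)+\dGT(N,O)+2\epsilon,
\]
and letting $\epsilon\to0$ gives the triangle inequality.

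The main obstacle is the existence of the composite module $\Psi$ with the correct pullbacks; I take it from Proposition~\ref{prop:compose-couplings} as cited. If I were to reprove Lemma~\ref{lem:cost-subadditive}, I would argue pointwise. For $r=(q_1,q_2)\in\bfR$ we have $h_1(q_1)=f_2(q_2)$, so
\[
d_\bfP\bigl(f_1(q_1),h_2(q_2)\bigr)\le d_\bfP\bigl(f_1(q_1),h_1(q_1)\bigr)+d_\bfP\bigl(f_2(q_2),h_2(q_2)\bigr),
\]
which is at most $\cost(\Gamma_1)+\cost(\Gamma_2)$.
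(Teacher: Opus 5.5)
Your proof is correct and follows the paper's argument. The identity coupling gives $\dGT(M,M)=0$, swapping the two insertion legs gives symmetry, and the triangle inequality comes from composing couplings via Proposition~\ref{prop:compose-couplings} together with Lemma~\ref{lem:cost-subadditive}; your $\varepsilon$-argument and the pointwise bound $d_\bfP(f_1(q_1),h_2(q_2))\le d_\bfP(f_1(q_1),h_1(q_1))+d_\bfP(f_2(q_2),h_2(q_2))$ on $\bfR$ just make explicit what the paper leaves to the cited results (and $\pi_1\iota_1=\mathrm{id}$ does hold, since $\iota_1(q_1)=(q_1,g_2h_1(q_1))$).
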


\begin{proof}
Nonnegativity is immediate. For any $M$, the identity coupling $\bfQ=\bfP$, $f=h=\id_{\bfP}$, $g=i=\id_{\bfP}$, $\Gamma=M$ has cost $0$, so $\dGT(M,M)=0$. Symmetry holds because swapping the two insertion legs $(f\dashv g,h\dashv i)$ of any coupling gives a coupling for $(N,M)$ with the same cost (the metric $d_{\bfP}$ is symmetric). The triangle inequality follows from Lemma~\ref{lem:cost-subadditive}.
\end{proof}

\begin{corollary}
\label{cor:gt-metric-on-iso}
On isomorphism classes, $\dGT$ is an extended metric: if $\dGT(M, N)=0$ then $M\cong N$.
\end{corollary}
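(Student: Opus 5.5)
The plan is to exploit the finiteness of $\bfP$ to show that the infimum in Definition~\ref{def:transport-distance} is attained whenever it is finite, and then to show that a coupling of cost $0$ forces $M\cong N$. The apex posets $\bfQ$ range over an infinite family, so there is no a priori reason for the infimum to be achieved. However, the cost of any Galois coupling $(\bfQ,f\dashv g,h\dashv i,\Gamma)$ is
\[
\cost(\Gamma)=\sup_{q\in\bfQ}d_\bfP\bigl(f(q),h(q)\bigr).
\]
Since $\bfQ$ is finite, this supremum is a maximum. Hence $\cost(\Gamma)$ belongs to the finite set
\[
D:=\{d_\bfP(x,y)\mid x,y\in\bfP\}\subset[0,\infty).
\]

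Let $\delta:=\min\bigl(D\setminus\{0\}\bigr)$, with $\delta=\infty$ if $\bfP$ has only one point. Because $d_\bfP$ is a metric on the finite set $\bfP$, we have $\delta>0$. Every coupling therefore has cost either $0$ or at least $\delta$. So if $\dGT(M,N)=0$, there must be a coupling $(\bfQ,f\dashv g,h\dashv i,\Gamma)$ of $(M,N)$ with $\cost(\Gamma)=0$. For this coupling, $d_\bfP(f(q),h(q))=0$ for every $q\in\bfQ$, and hence $f=h$ as maps $\bfQ\to\bfP$.

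Next, I use that a right adjoint is determined by its left adjoint. By Lemma~\ref{lem:galois-connection-basics}(2), for every $x\in\bfP$,
\[
g(x)=\max\{u\in\bfQ\mid f(u)\le x\}=\max\{u\in\bfQ\mid h(u)\le x\}=i(x).
\]
Thus $g=i$, and therefore
\[
M\cong g^{*}\Gamma=i^{*}\Gamma\cong N.
\]
Combined with Theorem~\ref{thm:gt-pseudometric}, this also shows that $\dGT$ is well defined on isomorphism classes, since it is invariant under replacing $\Gamma$'s pullback identifications by isomorphic ones. It therefore descends to an extended metric there.

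The only real obstacle is the first step, namely that the infimum is attained. It is resolved by the observation that all costs lie in the finite set $D$, which has a positive gap $\delta$ above $0$.
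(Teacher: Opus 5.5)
Your proof is correct and follows the paper's argument: all costs lie in the finite set $\{d_{\bfP}(x,y)\mid x,y\in\bfP\}$, so the infimum is attained, and a cost-$0$ coupling gives $f=h$ and hence $M\cong N$. The only cosmetic difference is the last step. You conclude via $g=i$, since a right adjoint between posets is determined by its left adjoint (Lemma~\ref{lem:galois-connection-basics}(2)), and then $M\cong g^{*}\Gamma=i^{*}\Gamma\cong N$. The paper instead uses $M\cong f_{!}\Gamma=h_{!}\Gamma\cong N$ from Corollary~\ref{cor:3_adj}; both rest on the same uniqueness-of-adjoints fact.
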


\begin{proof}
Since $\bfP$ is finite, the set $\{d_{\bfP}(x,y)\mid x,y\in\bfP\}$ is finite; hence every coupling has cost in this finite set, and the infimum in the definition of $\dGT(M,N)$ is a \emph{minimum}. If $\dGT(M,N)=0$, there exists a coupling with $\cost(\Gamma)=0$, so $d_{\bfP}(f(q),h(q))=0$ for all $q\in\bfQ$, hence $f(q)=h(q)$ and therefore $f=h$ as maps $\bfQ\to\bfP$. Using Corollary~\ref{cor:3_adj},
\[
M\ \cong\ f_{!}\Gamma\ =\ h_{!}\Gamma\ \cong\ N,
\]
so $M\cong N$.
\end{proof}

\paragraph{Relation to interleavings.}
When $\bfP=\bbR$ with its usual total order, the G\"ulen--McCleary
distance coincides with the classical interleaving distance.
This equivalence was established by G\"ulen and McCleary
\cite[Proposition~6.4]{GulenMcCleary}.

\subsection{Examples}

We now present two illustrative examples—one in the 1-parameter setting
and one in the 2-parameter setting.  
These will serve as running test cases throughout the paper for the 
G\"ulen--McCleary distance and its comparison with later constructions.
First we collect terminologies for later use.

Recall that the \emph{Hasse quiver} $H(\bfP)$ of a finite poset $\bfP$
is defined  as follows:
The vertex set of $H(\bfP)$ is given by $\bfP$ itself.
For any $a, b \in \bfP$, the number of arrows from $a$ to $b$ 
in $H(\bfP)$ is at most one, and it is one if and only if
$a < b$ and there exist no $c \in \bfP$
such that $a < c < b$.
We sometimes express finite posets as their Hasse quivers below.
Note that any $M \in \vect^\bfP$ is expressed as a representation
of the quiver $H(\bfP)$ satisfying the full commutativity relations.

\begin{definition}
\label{dfn:gen-interval}
Let $\bfP$ be a finite poset.

(1) A full subposet $I$ of $\bfP$ is said to be \emph{connected} (resp.\ \emph{convex in $\bfP$}) if $H(I)$ is connected as a graph (resp.\ if $a \le c \le b$ in $\bfP$ with $a, b \in I$ implies $c \in I$).
Then $I$ is called a \emph{generalized interval} 
(or simply \emph{interval}) in $\bfP$ if
it is both connected and convex in $\bfP$.
By $\gInt \bfP$ we denote the set of all generalized intervals.

(2) A subset $A$ of $\bfP$ is called an \emph{antichain} in $\bfP$ if any distinct elements of $A$ are incomparable. We denote by $\Ac(\bfP)$ the set of all antichains in $\bfP$.
According to \cite{BBH2024approximations},
for any $A, B \in \Ac(\bfP)$, we define $A \le B$ if for any $a \in A$, there exists $b_a \in B$ such that $a \le b_a$, and for any $b \in B$, there exists $a_b \in A$ such that $a_b \le b$\ \footnote{
In other words, $A \le B$ if and only if
``$A \subseteq \dset B$ and $B \subseteq \uset A$''
if and only if
``$\dset A \subseteq \dset B$ and $\uset B \subseteq \uset A$'',
where $\uset A:= \{x \in \bfP \mid a \le x \text{ for some } a \in A\}$, the up-set of $A$ and $\dset B:= \{x \in \bfP \mid x \le b \text{ for some } b \in B\}$, the down-set of $B$.}.
In this case, we define
$$
[A, B]:= \{x \in \bfP \mid a \le x \le b \text{ for some }
a \in A \text{ and for some }b \in B\}.
$$
It is known that 
$\{[A, B] \mid A, B \in \Ac(\bfP), \ A \le B\}$
forms the set of all convex subsets in $\bfP$
(e.g., see \cite[Proposition 2.2]{BBH2024approximations}, \cite[Lemma 3.10]{asashiba2024interval}).
Hence any generalized interval $I$ has the form $I = [A, B]$, where
$A$ (resp.\ $B$) is the set of minimal (resp.\ maximal) elements in $I$
(here $A, B \in \Ac(\bfP)$ automatically).

Note here that a full subposet $S$ of $\bfP$ is an interval 
in the usual sense, namely, $S = [a, b]$ for some $a \le b$ in $\bfP$
if and only if $S$ is a generalized interval $[A, B]$ such that
both $A$ and $B$ are singletons.

(3) For any generalized interval $I$ in $\bfP$, a $\bfP$-module $V_I$
(called the \emph{interval module for $I$}) is defined by $V_I(x):= \k$ if $x \in I$, $V_I(x):= 0$ otherwise; and
$V_I(\al):= \id_\k$ if $\al$ is an arrow $a \to b$ with $a, b \in I$, $V_I(\al):= 0$ otherwise.
As is easily seen, the endomorphism algebra of $V_I$ is isomorphic
to $\k$, and hence it is an indecomopsable $\bfP$-module.
To emphasize the fact that $V_I$ is a $\bfP$-module, we denote it
by $V^\bfP_I$.
\end{definition}

\begin{example}\label{ex:gtd-chain}
Let $\bfP=\{1<2<3<4\}$ with metric $d_{\bfP}(i,j)=|i-j|$.  
Define
\[
M:= V_{[1,1]}\oplus V_{[2,3]},\qquad
N:=V_{[2,3]}.
\]
To construct a low-cost coupling, take the apex poset 
$\bfQ:=\{0<1<2<3<4<5\}$ and define $f,h:\bfQ\to\bfP$
and $g, i \colon \bfP \to \bfQ$ by the following tables:
\[
\begin{array}{c|cccccc}
q & 0 & 1 & 2 & 3 & 4 & 5\\ \hline
f(q) & 1 & 1 & 2 & 3 & 4 & 4\\
h(q) & 1 & 2 & 2 & 3 & 4 & 4\\
\hline
d_\bfP(f(q), h(q)) & 0 & 1 & 0 &0 &0 & 0
\end{array},
\quad\text{and}\quad
\begin{array}{c|cccccc}
p & 1 & 2 & 3 & 4\\ \hline
g(p) & 1 & 2 & 3 & 5\\
i(p) & 0 & 2 & 3 & 5
\end{array}.
\]
A short computation shows that $f\dashv g$ and $h\dashv i$ are Galois insertions.
Let
\[
\Gamma := V^{\bfQ}_{[1,1]}\ \oplus\ V^{\bfQ}_{[2,3]}.
\]  
Then 
\[
g^*\Gamma \cong M,\qquad i^*\Gamma \cong N,
\]
and therefore $(\bfQ,f\dashv g,h\dashv i,\Gamma)$ is a Galois coupling
of the pair $(M,N)$.
The coupling cost is
\[
\cost(\Gamma)=\sup_{q\in\bfQ} d_{\bfP}(f(q),h(q))=1,
\]
and hence $\dGT(M,N)\le 1$.
To see that this bound is sharp, note that $\bfP$ is finite, so 
$d_{\bfP}$ takes only integer values.  A coupling of cost $<1$ would
therefore have cost $0$, which would force $\dGT(M,N)=0$.  By
Corollary~\ref{cor:gt-metric-on-iso}, this would imply $M\cong N$, which
is false.  Hence no cost-$0$ coupling exists.
Thus $\dGT(M,N)\ge 1$.
Combining the upper and lower bounds, we conclude
\[
\dGT(M,N)=1.
\]
\end{example}

Before giving the next examples, we first give a general way of constructing Galois couplings.
Similar constructions are found in \cite[Lemma 6.5]{hem2025poset}.
For a $\bfP$-module $M$, we set $\supp M:= \{x \in \bfP \mid M(x) \ne 0\}$, the support of $M$, and
for a monotone map $\si \colon \bfP \to \bfP$, we set $\bfP^\si$ to be the set
$\{x \in \bfP \mid \si(x) = x\}$ of fixed points of $\bfP$ by $\si$.
Recall that a monotone map $\si$ above is called a \emph{translation} if
$x \le \si(x)$ for all $x \in \bfP$.

\begin{proposition}
\label{prp:gen-Gal-coupl}
Let $\bfP$ be a finite poset with metric $d_\bfP$,
and $\si \colon \bfP \to \bfP$ a translation.
Then we can construct a finite poset $\bfQ$ and Galois insertions
$f : \bfQ \rightleftarrows \bfP : g$ and 
$h : \bfQ \rightleftarrows \bfP : i$ as follows.

(1) Take the disjoint union
\[
\hat{\bfQ} \;:=\; \bfP_L \sqcup \bfP_R
\]
of two copies of $\bfP$, more explicitly,
\[
\bfP_L := \{\, x_L := (x,0) \mid x \in \bfP \,\}, 
\ 
\bfP_R := \{\, x_R := (x,1) \mid x \in \bfP \,\},
\  \text{and }\ \hat{\bfQ}:= \bfP_L \cup \bfP_R.
\]
For each $S \subseteq \bfP$, we set $S_T:= \{x_T \mid x \in S\}$ for all $T \in \{L, R\}$.

(2) We define a binary relation $\le$ on $\hat{\bfQ}$ as follows.
For any $x, y \in \bfP$, and $i, j \in \{0,1\}$,
$$
(x,i) \le (y,j)
\quad\text{if and only if}\quad
\begin{cases}
\si(x) \le y
  & \text{if $i \ne j$},\\
x \le y & \text{otherwise}.
\end{cases}
$$

(3) Then this is a preorder on $\hat{\bfQ}$.
Denote by $\sim_\si$ the equivalence relation with respect to this preorder, namely,
for any $x\, y \in \bfP$, and $i, j \in \{0,1\}$,
$(x,i) \sim_\si (y,j)$ if and only if $(x,i) \le (y,j)$ and $(y,j) \le (x,i)$.

(4) We define $\bfQ$ to be the quotient poset $\hat{\bfQ}/\!\!\sim_\si$, and denote
by $[\blank] \colon \hat{\bfQ} \to \bfQ,\, u \mapsto [u]$ the canonical surjection.
We set $[S]:= \{[s] \mid s \in S\}$ for all $S \subseteq \hat{\bfQ}$.
For each $T \in \{L, R\}$,
this restricts to a bijection $\bfP_T \to [\bfP_T]$,
by which we identify $\bfP_T$ and $[\bfP_T]$.

(5) For each $(x,i) \in \hat{\bfQ}$, we have
$$
[(x,i)] = \begin{cases}
\{x_L, x_R\} & \text{if } \si(x) = x,\\
\{(x,i)\} & \text{otherwise}.
\end{cases}
$$
Thus $\bfQ$ is obtained from $\bfP_L$ and $\bfP_R$ by making the identifications
$x_L = x_R$ for all $x \in \bfP^\si$:
\begin{equation}
\label{eq:gen-str-bfQ}
\bfP_L \cup \bfP_R = \bfQ, \ \text{and } \ \bfP_L\cap \bfP_R = [\bfP^\si_L] = [\bfP^\si_R].
\end{equation}

(6) The following defines a Galois insertion $f : \bfQ \rightleftarrows \bfP : g$.
$$
g(x) := x_L,
\quad\text{and}\quad
f(u):= \begin{cases}
x &\text{if $u = x_L$ for some $x \in \bfP$},\\
\si(x) & \text{if $u = x_R$ for some $x \in \bfP$}.
\end{cases}
$$
Note that $f$ is well-defined by \eqref{eq:gen-str-bfQ}.

(7) Similarly, the following defines a Galois insertion $h : \bfQ \rightleftarrows \bfP : i$.
$$
i(x) := x_R,
\quad\text{and}\quad
h(u):= \begin{cases}
\si(x) & \text{if $u = x_L$ for some $x \in \bfP$},\\
x &\text{if $u = x_R$ for some $x \in \bfP$}.
\end{cases}
$$

(8)
Now let $M$ and $N$ be $\bfP$-modules.
If there exists a $\bfQ$-module $\Ga$ such that
$g^*(\Ga) \cong M$ and $i^*(\Ga) \cong N$, then
$(\bfQ, f \dashv g, h \dashv i, \Ga)$ is a Galois coupling 
for $(M, N)$ by definition.
If this is the case, the $\cost(\Ga)$ is given by
$$
\cost(\Ga) = \sup_{u \in \bfQ} d_\bfP(f(u), h(u)) = \sup_{x \in \bfP} d_\bfP(x, \si(x)).
$$
For example, If
\begin{equation}
\label{eq:M=N-si}
M = N\si,
\end{equation}
then we can construct a Galois coupling
$(\bfQ, f \dashv g, h \dashv i, \Ga)$
by setting $\Ga:= h^*(N)$.
\end{proposition}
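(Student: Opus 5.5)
We verify the items of Proposition~\ref{prp:gen-Gal-coupl} in order; each is a direct check from the definitions, and the only real care is in (3), (5) and the well-definedness and adjunction claims in (6).

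\emph{Items (3) and (5).} Reflexivity of $\le$ on $\hat{\bfQ}$ is clear, since only the same-side clause $x\le x$ is involved. For transitivity we split into cases on the sides of $(x,i)\le(y,j)\le(z,k)$.
\begin{itemize}
\item If no side change occurs, this is transitivity in $\bfP$.
\item If exactly one side change occurs, we get $\si(x)\le y\le z$, or $x\le y$ with $\si(y)\le z$. In the latter case $\si(x)\le\si(y)\le z$ by monotonicity of $\si$.
\item If two side changes occur (so $i=k$), we get $x\le\si(x)\le y\le\si(y)\le z$, using that $\si$ is a translation.
\end{itemize}
For (5), $x_L\sim y_L$ forces $x=y$. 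Also $x_L\sim y_R$ means $\si(x)\le y$ and $\si(y)\le x$. Then $x\le\si(x)\le y\le\si(y)\le x$, so $x=y=\si(x)$. Conversely, if $\si(x)=x$ then clearly $x_L\sim x_R$. This gives \eqref{eq:gen-str-bfQ} and the identification of $\bfP_T$ with $[\bfP_T]$. The main subtle point of the whole proof is exactly this use of $x\le\si(x)$.

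\emph{Item (6).} We first check that $f$ is well defined. On an identified point $x_L=x_R$ with $x\in\bfP^\si$, both formulas give $x=\si(x)$. Monotonicity of $f$ is checked on generating relations:
\begin{itemize}
\item $x_L\le y_L$ gives $x\le y$;
\item $x_R\le y_R$ gives $\si(x)\le\si(y)$;
\item $x_L\le y_R$, i.e.\ $\si(x)\le y$, gives $x\le\si(x)\le\si(y)$;
\item $x_R\le y_L$, i.e.\ $\si(x)\le y$, gives $f(x_R)=\si(x)\le y$.
\end{itemize}
The map $g$ is monotone, and $fg=\id_\bfP$. By Lemma~\ref{lem:Gal-conn} it remains to check $u\le gf(u)$. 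For $u=x_L$ this is equality. For $u=x_R$ we need $x_R\le \si(x)_L$, which holds because $\si(x)\le\si(x)$.

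\emph{Item (7).} This is symmetric, with $L$ and $R$ interchanged.

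\emph{Item (8).} The coupling claim is immediate from the definition. For the cost, $d_\bfP(f(u),h(u))$ equals $d_\bfP(x,\si(x))$ for both $u=x_L$ and $u=x_R$, by symmetry of $d_\bfP$. Hence the supremum over $\bfQ=[\bfP_L]\cup[\bfP_R]$ is $\sup_x d_\bfP(x,\si(x))$.

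Finally, for $\Ga:=h^*N$ we compute the two pullbacks:
\begin{align*}
i^*\Ga &= N\circ h\circ i = N,\\
g^*\Ga &= N\circ h\circ g = N\si = M,
\end{align*}
since $h(x_L)=\si(x)$.
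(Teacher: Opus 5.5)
Your proposal is correct and follows the paper's proof essentially step by step. It uses the same case analysis for transitivity in (3), the same determination of the $\sim_\sigma$-classes in (5), and the same pullback computation $N\circ h\circ g=N\sigma$ in (8); your chain $x\le\sigma(x)\le y\le\sigma(y)\le x$ is just a shorter variant of the paper's chain through $\sigma^2(x)$. The only real variation is in (6): you check monotonicity of $f$ directly and then apply the unit/counit criterion of Lemma~\ref{lem:Gal-conn}, whereas the paper shows $f(u)=\min\{y\mid u\le g(y)\}$ and $g(x)=\max\{u\mid f(u)\le x\}$. You also spell out the cost formula and the $L\leftrightarrow R$ symmetry giving (7), both of which the paper leaves implicit.
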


\begin{proof}
(3) For any $x \in \bfP$ and $i \in \{0,1\}$, since $x \le x$, we have $(x,i) \le (x,i)$.

Let $(x,i), (y,j), (z,k) \in \hat{\bfQ}$, and assume that
$(x,i) \le (y,j)$ and $(y,j) \le (z,k)$.
Then we have four cases: (a) $i=j$, $j = k$; (b) $i = j$, $j \ne k$;
(c) $i \ne j$, $j = k$; and (d) $i \ne j$, $j \ne k$.
It is easy to see that in any case, we have $(x,i) \le (z,k)$ by definition.

(5)
Let $(x,i), (y,j) \in \hat{\bfQ}$, and assume that
$(x,i) \sim_\si (y,j)$.  Then 

Case $i = j$: In this case, we have $x \le y$ and $y \le x$, and hence $x = y$.

Case $i \ne j$: In this case, we have $\si(x) \le y$ and $\si(y) \le x$, and hence
$\si^2(x) \le \si(y) \le x \le \si(x) \le \si^2(x)$, which shows that
$\si(x) = x = \si(y)$.  By symmetry we have $\si(y) = y$.
Thus $x = y$ and $\si(x) = x$.
This shows statement (5).

(6) First we show that for any $u \in \bfQ$, we have $f(u) = \min\{y \in \bfP \mid u \le g(y)\}$.
When $u = [x_L]$ for some $x \in \bfP$,
we have the following equivalences for any $y \in \bfP$:
$$
u \le g(y) \Longleftrightarrow [x_L] \le [y_L] \Longleftrightarrow x \le y.
$$
Hence $\min\{y \in \bfP \mid u \le g(y)\} = x = f(u)$.

When $u = [x_R]$ for some $x \in \bfP$, we have the following equivalences for any $y \in \bfP$:
$$
u \le g(y) \Longleftrightarrow [x_R] \le [y_L] \Longleftrightarrow \si(x) \le y.
$$
Hence $\min\{y \in \bfP \mid u \le g(y)\} = \si(x) = f(u)$.

Second, we show that for any $x \in \bfP$, we have $g(x) = \max\{u \in \bfQ \mid f(u) \le x\}$.
Note that
$$
\{u \in \bfQ \mid f(u) \le x\} = \{[y_L] \mid y \in \bfP, y \le x\} \cup \{[z_R] \mid z \in \bfP, \si(z) \le x\}.
$$
Then clearly $[x_L] = \max \{[y_L] \mid y \in \bfP, y \le x\}$, and for any $z \in \bfP$,
if $\si(z) \le x$, then $[z_R] \le [x_L]$, which shows that
$\max\{u \in \bfQ \mid f(u) \le x\} = [x_L] = g(x)$.

(8) This is verified as follows:
$$
g^*(\Ga) = N \circ h \circ g = N \circ \si = M,\ \text{and }\ 
i^*(\Ga) = N \circ h \circ i = N \circ \id_{\bfP} = N.
$$
\end{proof}

We now give further sufficient conditions when $\Ga$ can be constructed from $M, N$ in the setting above.

\begin{proposition}
\label{prp:Ga-MN}
Consider the same setting as in Proposition \ref{prp:gen-Gal-coupl}.
We define a subposet  $\bfQ'$ of $\bfQ$ as follows:
The underlying set of $\bfQ'$ is the same as that of $\bfQ$.
The partial order $\le'$ of $\bfQ'$ is defined by
$$
[(x,i)] \le' [(y,j)] :\Longleftrightarrow i=j \text{ and } x \le y \text{ in } \bfP
$$
for all $(x,i), (y,j) \in \hat{\bfQ}$.
For each $T \in \{L, R\}$, denote by $p_T \colon \bfP_T \to \bfP$,
$x_T \mapsto x$ the first projection.

(1) Assume that
\begin{equation}
\label{eq:M-N-compatible}
M|_{\bfP^\si} = N|_{\bfP^\si}
\end{equation}
(for example, this trivially holds if
$\bfP^\si  \cap \supp M = \varnothing = \bfP^\si \cap \supp N$), 
Then we define $\Ga' \in \vect^{\bfQ'}$ by
\[
\Gamma'|_{\bfP_L} = M \circ p_L,
\qquad
\Gamma'|_{\bfP_R} = N \circ p_R,
\]
which is well-defined by \eqref{eq:M-N-compatible} and \eqref{eq:gen-str-bfQ}.
Regard $\k[\bfQ]$ as $\k[\bfQ]$-$\k[\bfQ']$-bimodule via
the inclusion $q \colon \bfQ' \to \bfQ$, and consider the 
adjoint pair given by this bimodule:
$$
\k[\bfQ] \otimes_{\k[\bfQ']}\blank \colon \vect^{\bfQ'} 
\rightleftarrows \vect^{\bfQ} : \vect^{\bfQ}(\k[\bfQ], \blank).
$$
Set $\Ga$ to be the left Kan extension of $\Ga'$ along $q$:
$$
\Ga:= \k[\bfQ] \otimes_{\k[\bfQ']} \Ga'.
$$
In addition, assume that
$$
\Ga|_{\bfQ'} \cong \Ga'
$$
(this holds for example, if the unit morphism
$$
\et_{\Ga'} \colon \Ga' \to \vect^{\bfQ}(\k[\bfQ], \k[\bfQ] \otimes_{\k[\bfQ']} \Ga')
$$
is an isomorphism). Then 
$(\bfQ, f \dashv g, h \dashv i, \Ga)$ is a Galois coupling for $(M,N)$. 

(2) For each $1 \le n\in\bbN$, we write $[n]=\{1,\dots,n\}$. 
Let $M= \Ds_{i=1}^m V_{I_i}^\bfP$ and
$N = \Ds_{j=1}^n V_{J_j}^\bfP$ for some $I_i, J_j \in \gInt \bfP$
with $i \in [m], j \in [n]$ for some $1 \le m, n \in \bbN$.
Then it is obvious that $I_{i,L}:= (I_i)_L$ and $J_{j,R}:= (J_j)_R$ are in $\gInt \bfQ'$ for all $i \in [m], j \in [n]$.
Assume that $I_{i,L}$ and $J_{j,R}$ are convex also in $\bfQ$, or equivalently,
$I_{i,L}, J_{j,R} \in \gInt \bfQ$ for all $i \in [m], j \in [n]$.
Then by setting
$$
\Ga:= \left(\Ds_{i=1}^m V_{I_{i,L}}^\bfQ\right) \ds 
\left(\Ds_{j=1}^n V_{J_{j,R}}^\bfQ\right)
$$
We have a Galois coupling $(\bfQ, f \dashv g, h \dashv i, \Ga)$
for $(M,N)$.
\end{proposition}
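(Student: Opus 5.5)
The plan is to reduce both parts to the single verification that $g^{*}\Ga\cong M$ and $i^{*}\Ga\cong N$. Once that holds, Proposition~\ref{prp:gen-Gal-coupl}(8) says that $(\bfQ,f\dashv g,h\dashv i,\Ga)$ is a Galois coupling, since the two Galois insertions are already constructed there. The basic observation is that $g$ and $i$ factor through the inclusion $q\colon\bfQ'\to\bfQ$. Write $g'\colon\bfP\to\bfQ'$, $x\mapsto x_L$, and $i'\colon\bfP\to\bfQ'$, $x\mapsto x_R$. These are monotone for $\le'$, because $x\le y$ gives $x_L\le' y_L$ and $x_R\le' y_R$ by the definition of $\le'$. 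Hence $g=q g'$ and $i=q i'$, and so
\[
g^{*}\Ga = g'^{*}(q^{*}\Ga)=g'^{*}(\Ga|_{\bfQ'}),\qquad i^{*}\Ga=i'^{*}(\Ga|_{\bfQ'}).
\]

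For (1), I would first make the well-definedness of $\Ga'$ precise. $\bfQ'$ is the gluing of $\bfP_L$ and $\bfP_R$ along the common subset $[\bfP^\si_L]=[\bfP^\si_R]$ from \eqref{eq:gen-str-bfQ}. Its order relations are generated by the relations inside each copy, with transitive closure taken through the glued points. A functor on $\bfQ'$ is therefore the same as a pair of functors on $\bfP_L$ and $\bfP_R$ that agree on the glued full subposet. Assumption \eqref{eq:M-N-compatible} gives exactly this agreement for $M\circ p_L$ and $N\circ p_R$, so $\Ga'$ exists. Next, the extra hypothesis $\Ga|_{\bfQ'}\cong\Ga'$ gives
\[
g^{*}\Ga\cong g'^{*}\Ga' = M\circ p_L\circ g' = M, \qquad i^{*}\Ga\cong N\circ p_R\circ i'=N.
\]
The parenthetical remark about the unit $\et_{\Ga'}$ follows because $\vect^{\bfQ}(\k[\bfQ],\blank)$ is the restriction $q^{*}$, by the Yoneda lemma.

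For (2), I would argue directly with interval modules. If $I\in\gInt\bfQ$, then $g^{*}V^{\bfQ}_I$ is the interval-type module supported on $g^{-1}(I)$, with identity structure maps wherever both ends lie in the support; the same holds for $i$. So the plan is to compute preimages: $g^{-1}(I_{i,L})=I_i$ and $i^{-1}(J_{j,R})=J_j$. Summing over the indices and using that pullback commutes with direct sums gives the two isomorphisms.

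The main obstacle is the cross terms $g^{-1}(J_{j,R})=\{x\mid x_L\in J_{j,R}\}=J_j\cap\bfP^\si$ and symmetrically $i^{-1}(I_{i,L})=I_i\cap\bfP^\si$. These vanish only when the intervals avoid the fixed points. So I would check how the glued points are to be read. One option is to work, as in (1), with $\Ga$ as the Kan extension of the gluing $\Ga'$, so that a fixed point carries the common value $M(x)=N(x)$ only once. The other is to impose the disjointness $\bfP^\si\cap\supp M=\varnothing=\bfP^\si\cap\supp N$ mentioned in (1), under which the cross terms are zero and the computation above finishes the proof. I would first try the disjoint case and then see whether the convexity hypothesis on $I_{i,L}$ and $J_{j,R}$ lets (2) be deduced from (1) in general.
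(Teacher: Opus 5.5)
Your argument for (1) is the paper's: factor $g=q\circ g'$ and $i=q\circ i'$, then read off $g^{*}\Ga\cong (g')^{*}\Ga'=M$ and $i^{*}\Ga\cong (i')^{*}\Ga'=N$ from $\Ga|_{\bfQ'}\cong\Ga'$. You assert that a functor on $\bfQ'$ is just a pair of functors on $\bfP_L,\bfP_R$ agreeing on $\bfP^\si$. This is not automatic for gluings of posets, because a relation $y_L\le' z_R$ can factor through several fixed points. It does hold here: the stable value $x_0$ of $y\le\si(y)\le\si^2(y)\le\cdots$ is a fixed point lying below every fixed point $x\ge y$, so every such composite equals the one through $x_0$. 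The paper also leaves this implicit.

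For (2), the obstacle you isolate is genuine, and it is a defect of the statement, not of your approach. The paper's proof computes $g^{*}V^{\bfQ}_{I_{i,L}}=V^{\bfP}_{I_i}$ and $i^{*}V^{\bfQ}_{J_{j,R}}=V^{\bfP}_{J_j}$. It then concludes $g^{*}\Ga=M$ and $i^{*}\Ga=N$, silently discarding the cross terms you found: $g^{*}V^{\bfQ}_{J_{j,R}}\cong V^{\bfP}_{J_j\cap\bfP^\si}$ and $i^{*}V^{\bfQ}_{I_{i,L}}\cong V^{\bfP}_{I_i\cap\bfP^\si}$. Convexity in $\bfQ$ does not make them vanish.

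Here is a counterexample. Let $\bfP=\{1<2<3\}$ and $\si(x)=\min\{x+1,3\}$, so $\bfP^\si=\{3\}$. Take $M=V^{\bfP}_{[2,3]}$ and $N=V^{\bfP}_{\{3\}}$. Then $I_{1,L}=\{2_L,3\}$ and $J_{1,R}=\{3\}$ are generalized intervals of $\bfQ$. Yet $\Ga=V^{\bfQ}_{\{2_L,3\}}\ds V^{\bfQ}_{\{3\}}$ gives $g^{*}\Ga\cong V^{\bfP}_{[2,3]}\ds V^{\bfP}_{\{3\}}\not\cong M$ and $i^{*}\Ga\cong V^{\bfP}_{\{3\}}\ds V^{\bfP}_{\{3\}}\not\cong N$.

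So your second option, deducing (2) in general from (1) via convexity, cannot succeed. Part (1) does apply to this example, but it produces the glued module $V^{\bfQ}_{\{2_L,3\}}$, not the direct sum prescribed in (2).

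The right repair is the one you propose first: add the hypothesis $\bfP^\si\cap\supp M=\varnothing=\bfP^\si\cap\supp N$. A dimension count at a point of $\bfP^\si$ shows this hypothesis is also necessary for this particular $\Ga$. Under it the cross terms vanish, and your preimage computation is a complete proof. It is the paper's argument with the missing vanishing step supplied. The only later use of (2), in Example~\ref{ex:gtd-2d}, satisfies the hypothesis, since $\bfP^\si=\{33\}$ lies outside both supports.
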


\begin{proof}
We can define a monotone map $g'\colon \bfP \to \bfQ'$ by setting
$g'(x):= g(x)$ for all $x \in \bfP$.  Then $g = q \circ g'$ as monotone maps.

(1) By definition of $\Ga'$ and the additional assumption,
we have the following commutative diagrams:
\[
\begin{tikzcd}
\bfP_L & \bfQ'\\
\bfP & \vect
\Ar{1-1}{1-2}{hook}
\Ar{2-1}{2-2}{"M"'}
\Ar{1-1}{2-1}{"p_L"'}
\Ar{1-2}{2-2}{"\Ga'"}
\Ar{2-1}{1-2}{"g'"'}
\end{tikzcd}
\quad\text{and}\quad
\begin{tikzcd}
	\bfP && \bfQ \\
	& {\bfQ'} \\
	& \vect
	\arrow["g", from=1-1, to=1-3]
	\arrow["{g'}", from=1-1, to=2-2]
	\arrow["M"', from=1-1, to=3-2, bend right]
	\arrow["\Ga", from=1-3, to=3-2, bend left]
	\arrow["q", from=2-2, to=1-3]
	\arrow["{\Ga'}"', ""{name=A}, from=2-2, to=3-2]
    \Ar{A}{1-3}{"\cong"', Rightarrow, shorten=20pt}
\end{tikzcd},
\]
where only the triangle containing ``$\Rightarrow$'' is commutative 
up to natural isomorphsim.
Therefore, $g^*(\Ga) = \Ga \circ g \cong M$.
Similarly, we have $i^*(\Ga) \cong N$.

(2)
Set $\calI:= \{I_{i,L}, J_{j,R} \mid i \in [m], n \in [n]\}$.
Since $\calI \subseteq \gInt \bfQ$, we have
$V_I^{\bfQ} \circ q' = V_I^{\bfQ'}$ for all $I \in \calI$.
By definition of interval modules, we have the following commutative diagram for all $i \in [m]$:
\[
\begin{tikzcd}
	\bfP && \bfQ \\
	& {\bfQ'} \\
	& \vect
	\arrow["g", from=1-1, to=1-3]
	\arrow["{g'}", from=1-1, to=2-2]
	\arrow["{V_{I_{i}}^\bfP}"', from=1-1, to=3-2, bend right]
	\arrow["{V_{I_{i,L}}^\bfQ}", from=1-3, to=3-2, bend left]
	\arrow["q", from=2-2, to=1-3]
	\arrow["{V_{I_{i,L}}^{\bfQ'}}"', from=2-2, to=3-2]
\end{tikzcd}.
\]
Therefore, $g^*(V_{I_{i,L}}^\bfQ) = V_{I_{i,L}}^\bfQ\circ g
= V_{I_i}^\bfP$ for all $i \in [m]$.
Similarly, we have $i^*(V_{I_{j,R}}^\bfQ) = V_{I_{j}}^\bfP$
for all $j \in [n]$.
Thus we have $g^*(\Ga) = M$ and $i^*(\Ga) = N$.
\end{proof}

\begin{example}\label{ex:gtd-2d}
Let $\bfP:= \{1,2,3\}^2$ with the product order and 
$d_{\bfP}((i,j),(i',j'))=\max\{|i-i'|,|j-j'|\}$.
We denote $(x,y)$ simply by $xy$ for all $x, y \in \{1,2,3\}$.
Then $\bfP$ is visualized by its Hasse quiver as follows:
$$
H(\bfP) = 
\begin{tikzcd}
\Nname{13}13 & \Nname{23}23 & \Nname{33}33\\
\Nname{12}12 & \Nname{22}22 & \Nname{32}32\\
\Nname{11}11 & \Nname{21}21 & \Nname{31}31
\Ar{13}{23}{} \Ar{23}{33}{}
\Ar{12}{22}{} \Ar{22}{32}{}
\Ar{11}{21}{} \Ar{21}{31}{}
\Ar{11}{12}{} \Ar{12}{13}{}
\Ar{21}{22}{} \Ar{22}{23}{}
\Ar{31}{32}{} \Ar{32}{33}{}
\end{tikzcd}.
$$
Set $M_1:= V_{[12, 12]},\, M_2:= V_{[21,31]},\, M:= M_1 \ds M_2$ and $N:= V_{[22,\{23,32\}]}$, which are visualized as representations of $H(\bfP)$ as follows:
\begin{equation}
\label{eq:2D-M-N}
M = 
\begin{tikzcd}
\Nname{13}0 & \Nname{23}0 & \Nname{33}0\\
\Nname{12}\k & \Nname{22}0 & \Nname{32}0\\
\Nname{11}0 & \Nname{21}\k & \Nname{31}\k
\Ar{13}{23}{} \Ar{23}{33}{}
\Ar{12}{22}{} \Ar{22}{32}{}
\Ar{11}{21}{} \Ar{21}{31}{"1"}
\Ar{11}{12}{} \Ar{12}{13}{}
\Ar{21}{22}{} \Ar{22}{23}{}
\Ar{31}{32}{} \Ar{32}{33}{}
\end{tikzcd},\ \text{and}\quad
N = 
\begin{tikzcd}
\Nname{13}0 & \Nname{23}\k & \Nname{33}0\\
\Nname{12}0 & \Nname{22}\k & \Nname{32}\k\\
\Nname{11}0 & \Nname{21}0 & \Nname{31}0
\Ar{13}{23}{} \Ar{23}{33}{}
\Ar{12}{22}{} \Ar{22}{32}{"1"}
\Ar{11}{21}{} \Ar{21}{31}{}
\Ar{11}{12}{} \Ar{12}{13}{}
\Ar{21}{22}{} \Ar{22}{23}{"1"}
\Ar{31}{32}{} \Ar{32}{33}{}
\end{tikzcd}.
\end{equation}

We define a monotone map $\si \colon \bfP \to \bfP$ by
$\si(xy):= (\min\{x+1,3\}, \min\{y+1,3\})$ for each
$xy \in \bfP$.
Then as is easily seen, $\bfP^\si = \{33\}$.
To couple the modules $M, N$, we apply Proposition \ref{prp:gen-Gal-coupl} with this $\si$.
In this case, 
it is easy to verify that $[12,12]_L, [21,31]_L, [22, \{23,32\}]_R$ are
convex in $\bfQ$.
Hence by Proposition \ref{prp:Ga-MN} (2),
$\Ga:= V_{[12,12]_L}^\bfQ \ds V_{[21,31]_L}^\bfQ \ds V_{[22, \{23,32\}]_R}^\bfQ$
defines
a Galois coupling $(\bfQ, f \dashv g, h \dashv i, \Ga)$.
Then by Proposition \ref{prp:gen-Gal-coupl},
it follows from the definition of $\si$ that
\[
\cost(\Gamma)
\;=\;
\sup_{x \in \bfP} d_{\bfP}(x, \si(x)) = 1,
\]
and therefore
\[
\dGT^{\bfP}(M,N) \;\le\; 1.
\]

To see that this bound is sharp, note that $d_{\bfP}$ takes only integer values.
Thus a cost strictly smaller than $1$ would have to be $0$.
A cost-$0$ coupling forces $f=h$ and hence $M \cong N$
by Corollary~\ref{cor:gt-metric-on-iso}, but the modules are not isomorphic.
Combining the upper and lower bounds yields
\[
\dGT^{\bfP}(M,N) = 1.
\]
\end{example}

\begin{remark}
In Example \ref{ex:gtd-2d}, it is also possible to apply Proposition
\ref{ex:gtd-2d} (1) instead of (2).
In that case, we have a different $\Ga$ (to distinguish, we denote it
by $\Ga^!$).
First, $H(\bfQ')$ is given as follows:
$$
H(\bfQ') = 
\begin{tikzcd}
\Nname{13}13_R & \Nname{23}23_R & \Nname{33}33\\
\Nname{12}12_R & \Nname{22}22_R & \Nname{32}32_R\\
\Nname{11}11_R & \Nname{21}21_R & \Nname{31}31_R
\Ar{13}{23}{} \Ar{23}{33}{}
\Ar{12}{22}{} \Ar{22}{32}{}
\Ar{11}{21}{} \Ar{21}{31}{}
\Ar{11}{12}{} \Ar{12}{13}{}
\Ar{21}{22}{} \Ar{22}{23}{}
\Ar{31}{32}{} \Ar{32}{33}{}
\end{tikzcd}
\begin{tikzcd}
\Nname{13}13_L & \Nname{23}23_L & \Nname{33}33\\
\Nname{12}12_L & \Nname{22}22_L & \Nname{32}32_L\\
\Nname{11}11_L & \Nname{21}21_L & \Nname{31}31_L
\Ar{13}{23}{} \Ar{23}{33}{}
\Ar{12}{22}{} \Ar{22}{32}{}
\Ar{11}{21}{} \Ar{21}{31}{}
\Ar{11}{12}{} \Ar{12}{13}{}
\Ar{21}{22}{} \Ar{22}{23}{}
\Ar{31}{32}{} \Ar{32}{33}{}
\end{tikzcd},
$$
where we identify $33_L = 33 = 33_R$, and
$H(\bfQ)$ is obtained from $H(\bfQ')$ by adding arrows
$x_L \to \si(x)_R$ and $x_R \to \si(x)_L$ for all $x \in \bfP$.
Then $\Ga^!$ is given as follows as a representation of $H(\bfQ)$:
$$
\Ga^! = 
\begin{tikzcd}
\Nname{13}0 & \Nname{23}0 & \Nname{33}0\\
\Nname{12}\k & \Nname{22}0 & \Nname{32}0\\
\Nname{11}0 & \Nname{21}\k & \Nname{31}\k
\Ar{13}{23}{} \Ar{23}{33}{}
\Ar{12}{22}{} \Ar{22}{32}{}
\Ar{11}{21}{} \Ar{21}{31}{"1"}
\Ar{11}{12}{} \Ar{12}{13}{}
\Ar{21}{22}{} \Ar{22}{23}{}
\Ar{31}{32}{} \Ar{32}{33}{}
\end{tikzcd}
\begin{tikzcd}
\Nname{13}0 & \Nname{23}\k & \Nname{33}0\\
\Nname{12}0 & \Nname{22}\k & \Nname{32}\k\\
\Nname{11}0 & \Nname{21}0 & \Nname{31}0
\Ar{13}{23}{} \Ar{23}{33}{}
\Ar{12}{22}{} \Ar{22}{32}{"1"}
\Ar{11}{21}{} \Ar{21}{31}{}
\Ar{11}{12}{} \Ar{12}{13}{}
\Ar{21}{22}{} \Ar{22}{23}{"1"}
\Ar{31}{32}{} \Ar{32}{33}{}
\end{tikzcd},
$$
where for each additional arrow $\pi$, we have
$$
\Ga^!(\pi) = \begin{cases}
\id_\k & \text{if $\pi = \pi_{\si(x)_R, x_L}$ for $x = 21, 31$},\\
0 &\text{otherwise},
\end{cases}
$$
whereas $\Ga(\pi) = 0$ for all additional arrows $\pi$.
We remark that the Galois coupling given by
$(\bfQ,\ f\dashv g,\ h\dashv i,\ \Gamma^!)$ 
is precisely a $1$-interleaving in the sense of Lesnick’s theory of multiparameter persistence~\cite{Lesnick2015}.
\end{remark}

The following example will be used to illustrate the proof of
Theorem \ref{thm:stability}.

\begin{example}
\label{exm:stability}
Let $\bfP$ be a poset with $H(\bfP) = (1 \to 2)$ with metric $d_{\bfP}(a,b)=|a - b|$
for all $a, b \in \bfP$, and
define a monotone map $\si \colon \bfP \to \bfP$ by $\si(1) = 2 = \si(2)$.
Then $\bfP^\si = \{2\}$.
Apply Proposition \ref{prp:gen-Gal-coupl} to construct 
a finite poset $\bfQ$ and Galois insertions
$f : \bfQ \rightleftarrows \bfP : g$ and 
$h : \bfQ \rightleftarrows \bfP : i$.
In this case, $\bfQ$ is expressed by its Hasse quiver
$$
H(\bfQ) = (1_L\to 2 \leftarrow 1_R),
$$
where we set $2:= 2_L = 2_R$.
$f, h \colon \bfQ \to \bfP$ are defined as follows:
for any $x \in \bfP$,
$$
f(x_L):= x,\, f(1_R):= 2, \quad\text{and}\quad
h(x_R):= x,\, h(1_L):= 2.
$$
Their right adjoints $g, i \colon \bfP \to \bfQ$ are
defined as follows, respectively: for any $x \in \bfP$,
$$
g(x):= x_L,\quad\text{and}\quad i(x):= x_R.
$$

Consider $\bfP$-modules $M = V_\bfP \oplus V_{\{1\}}$ and
$N = V_\bfP$.
They are visualized as representations of the quiver $H(\bfP)$ as follows:
$$
M = \k^2 \ya{[1\ 0]} \k, \quad N = \k \ya{1} \k.
$$
It is obvious that \eqref{eq:M-N-compatible} is satisfied, and
note that $\bfQ' = \bfQ$ in this case.
Hence by Proposition \ref{prp:Ga-MN} (1),
we can take $\Ga:= \Ga'$, which is givne by
$(\k^2 \xrightarrow{[1\ 0]} \k \xleftarrow{1} \k)$ as a
representation of $H(\bfQ)$, and this defines a Galois coupling
$(\bfQ, f \dashv g, h \dashv i, \Ga)$ for $(M, N)$.

Since $d_\bfP(x, \si(x)) = 1, 0$
for $x = 1, 2$, respectively, we have
$\cost(\Ga) = 1$.
Hence $\dGT^{\bfP}(M, N) \le 1$.
Moreover, since $M \not\cong N$, we have $\dGT^{\bfP}(M, N) > 0$
by Corollary \ref{cor:gt-metric-on-iso}.
Therefore, $\dGT^{\bfP}(M, N) = 1$ because the image of
$\dGT^{\bfP}$ consists of integers in this case.
\end{example}

\section{Complex Matching Distance }
\label{sec:bneck}

In this section we define the complex matching distance. The construction is carried out first for bounded complexes of finitely generated projective $\k[\bfP]$-modules: after allowing padding by contractible cones, we compare complexes by degreewise matchings of their indecomposable summands that are compatible with the differentials. This yields an extended metric on the homotopy category $\Kb(\prj \k[\bfP])$. We then restrict the construction to non-negative complexes, and in particular to minimal projective resolutions of objects of $\vect^\bfP$, obtaining the version that we will compare with the GM distance in the next section. Throughout this section, $(\bfP,d_{\bfP})$ is a finite metric poset.
\subsection{Bounded complexes of finitely generated projectives}

We denote by $\Cb(\prj \k[\bfP])$ the category of bounded chain complexes of objects in $\prj \k[\bfP]$, and 
by $\Kb(\prj \k[\bfP])$ the homotopy category of $\Cb(\prj \k[\bfP])$.

Let  $M\down=(M_i,\partial^M_i)_{i\in \bbZ} \in \Cb(\prj \k[\bfP])$.
Then by Lemmas \ref{lem:ind-proj} and \ref{lem:mor-betw-proj},
we may set
\begin{equation}
\label{eq:proj-terms-cpx}
M_i = \Ds_{x^{(i)} \in \smd(M_i)} \k[\bfP]_{x^{(i)}}
\end{equation}
for all $i \in \bbZ$, and
\begin{equation}
\label{eq:diffl-cpx}
\partial^M_i = [a_{x^{(i+1)}, x^{(i)}}^{(i)}\, \ro(x^{(i+1)}\ge x^{(i)})]_{(x^{i}, x^{i+1})\, \in \smd(M_{i}) \times \smd(M_{i+1})}
\colon M_{i+1} \to M_i
\end{equation}
with
$\Mat(\partial^M_i) = {}^t[a_{x^{(i+1)}, x^{(i)}}^{(i)}]_{(x^{(i+1)}, x^{(i)})\, \in \smd(M_{i+1}) \times \smd(M_{i})}$ for all $i \in \bbZ$.

The \emph{size vector} $|M\down|$ of $M\down$ is
defined as $|M\down|:=(|M_i|)_{i\in \bbZ}$. For indecomposable projectives $U,V$ (identified with representables $U \cong \k[\bfP]_x$, $V \cong \k[\bfP]_y$), we set
\[
\dist(U,V):=d_{\bfP}(x,y).
\]

If $E$ is any projective object in $\vect^{\bfP}$, the mapping cone $\Cone(\id_E)$ is the two-term acyclic complex
\[
\cdots \to 0 \to \overset{\smat{(\deg\,1)\\\ }}{E} \xrightarrow{\ \id_E\ } \overset{\smat{(\deg\,0)\\\ }}{E} \to 0 \to \cdots,
\]
concentrated in degrees 0 and 1; its shift $\Cone(\id_E)[a]$ places the two copies of $E$ in degrees $a$ and $a+1$ for all $a \in \bbZ$.
An object $A\down \in \Cb(\prj \k[\bfP])$ is called
a \emph{contractible cone} if 
$A \cong \Cone(\id_E)[a]$ in $\Cb(\prj \k[\bfP])$
for some $E \in \prj \k[\bfP]$ and $a \in \bbZ$.

First of all, we introduce an extended pseudometric on $\Cb(\prj \k[\bfP])$, which will be shown to induce an extended metric on $\Kb(\prj \k[\bfP])$.
In the next subsection, this metric restricts to the subset consisting of the minimal projective resolutions of objects in $\vect^\bfP$ .
We begin with the following well-known facts (for fundamental facts on homological algebra see e.g., 
\cite{weibel1994introduction}).

\begin{proposition}
\label{prp:htp-eq=upto-acyc}
Let $M\down, N\down \in \Cb(\prj \k[\bfP])$.
Then we have the following:
\begin{enumerate}[label=(\arabic*)]
\item 
$M\down \cong N\down$ in $\Kb(\prj k[\bfP])$ if and only if
there exist acyclic complexes $A\down, B\down \in \Cb(\prj \k[\bfP])$
such that
\[
M\down \ds A\down \cong N\down \ds B\down \quad \text{ in } \Cb(\prj \k[\bfP]).
\]

\item
Any acyclic complex $A\down \in \Cb(\prj \k[\bfP])$
is isomorphic in $\Cb(\prj \k[\bfP])$
to the direct sum of finitely many contractible cones.
\end{enumerate}
\end{proposition}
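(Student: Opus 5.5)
I would prove (2) first, since (1) uses it. The key fact is that $\prj \k[\bfP]$ has enough structure: $\vect^\bfP$ has finite global dimension, and every acyclic bounded complex of projectives is split exact. More precisely, let $A\down$ be acyclic and bounded, say $A_i=0$ for $i<a$ and $i>b$. I would argue by induction on the length $b-a$.

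The differential $\partial_a\colon A_{a+1}\to A_a$ is surjective by exactness at degree $a$. Since $A_a$ is projective, it splits, giving $A_{a+1}\cong Z\ds A_a$ with $Z=\Ker\partial_a$, and $Z$ projective as a summand. Changing basis in $A_{a+1}$ via this splitting, $A\down$ decomposes as a direct sum in $\Cb(\prj\k[\bfP])$ of two pieces:
\begin{itemize}
\item $\Cone(\id_{A_a})[a]$;
\item the complex $\cdots\to A_{a+2}\to Z\to 0$, whose differential $A_{a+2}\to Z$ is the corestriction of $\partial_{a+1}$, valid since $\partial_a\partial_{a+1}=0$.
\end{itemize}
One checks that the isomorphism of complexes commutes with the differentials, which is a routine matrix computation. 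The second piece is acyclic, because homology is additive, and it is shorter. Induction then gives a finite sum of contractible cones. Finiteness is automatic from boundedness, and the Krull--Schmidt property lets us further split each $E$ into indecomposables if desired.

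For (1), the direction ``$\Leftarrow$'' is immediate. Each contractible cone is null-homotopic (homotopy given by $\id_E$), so by (2) any acyclic $A\down\in\Cb(\prj\k[\bfP])$ is zero in $\Kb$. Hence
\[
M\down\cong M\down\ds A\down\cong N\down\ds B\down\cong N\down \quad\text{in }\Kb.
\]

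For ``$\Rightarrow$'', take a homotopy equivalence $\phi\colon M\down\to N\down$. I would form the mapping cone $C\down=\Cone(\phi)$. It is acyclic, since $\phi$ is a quasi-isomorphism, and it lies in $\Cb(\prj\k[\bfP])$, so by (2) it is a sum of contractible cones.

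The standard trick is then to compare $N\down\ds\Cone(\id_{M\down})$-type objects with $M\down\ds\Cone(\ldots)$. Concretely, I would use the degreewise split short exact sequence
\[
0\to N\down\to C\down\to M\down[-1]\to 0
\]
(with the appropriate shift convention). Alternatively, and more cleanly, I would use the classical fact that $\phi$ is a homotopy equivalence if and only if it is a split injection in $\Cb$ after adding $\Cone(\id_{M\down})$. That is, the map $M\down\to N\down\ds\Cone(\id_{M\down})[-1]$, given by $(\phi,\iota)$ where $\iota$ is the canonical inclusion, is a degreewise split monomorphism. Its cokernel $B'\down$ is homotopy equivalent to $\Cone(\phi)$, hence acyclic. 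Since the monomorphism is degreewise split, one gets
\[
N\down\ds\Cone(\id_{M\down})[-1]\cong M\down\ds B'\down
\]
as graded objects. Upgrading this to an isomorphism of complexes is the main obstacle.

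To handle it, I would instead argue via minimal complexes using Krull--Schmidt. Every $X\down\in\Cb(\prj\k[\bfP])$ decomposes as $X\down\cong X^{\min}\down\ds(\text{contractible cones})$: repeatedly split off any component $\k[\bfP]_x\to\k[\bfP]_x$ of the differential with nonzero (invertible) coefficient, via Gaussian elimination. Here $X^{\min}\down$ is minimal, meaning all coefficients of $\Mat(\partial)$ lie off the identity components. Homotopy-equivalent minimal complexes are isomorphic in $\Cb$: writing $\psi\phi=\id+dh+hd$, the endomorphism $\psi\phi$ is congruent to an automorphism modulo the radical, because minimality puts $dh+hd$ in the radical of the finite-dimensional graded endomorphism algebra. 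Hence $\psi\phi$ is an isomorphism, and likewise $\phi\psi$. Applying this to $M^{\min}\down\cong N^{\min}\down$ yields (1), with $A\down,B\down$ taken as the appropriate sums of cones.
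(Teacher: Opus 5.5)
Your proof is correct. For (2) and for $(\Leftarrow)$ in (1) you follow the paper: the paper's (2) uses the same induction, splitting off $\Cone(\id_{A_m})[m]$ at the lowest nonzero degree, and its alternative proof of $(\Leftarrow)$ via (2) is exactly yours.

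For $(\Rightarrow)$ you take a genuinely different route. The paper simply invokes the standard mapping-cylinder argument, $M\down\ds\Cone(\phi)\cong\mathrm{Cyl}(\phi)\cong N\down\ds(\text{a cone on }\id_{M\down})$.

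That argument also dissolves the ``obstacle'' in your first attempt. Your cokernel $B'\down$ is a bounded acyclic complex of projectives, so it is contractible by (2). The extension class of the degreewise split sequence is then a chain map out of a contractible complex, hence null-homotopic, so the sequence already splits in $\Cb(\prj\k[\bfP])$.

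Your final argument works differently. You strip each complex to a minimal one by Gaussian elimination, then show that homotopy-equivalent minimal complexes are isomorphic. The key step is that $\psi\phi\in\id+\mathrm{rad}$ degreewise, which is invertible in the finite-dimensional algebras $\mathrm{End}(M_i)$.

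The trade-off is this. The mapping-cylinder route is purely formal and works in any additive category. Yours uses that $\mathrm{End}(\k[\bfP]_x)\cong\k$ is local and that Hom-spaces are finite dimensional. In return you get more: an explicit choice of $A\down,B\down$ as finite sums of contractible cones. You also get the complex-level analogue of uniqueness of minimal resolutions: a homotopy equivalence between minimal complexes is already an isomorphism in $\Cb$.
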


\begin{proof}
(1) ($\Rightarrow$).
Assume that $M\down$ and $N\down$ are isomorphic in $\Kb(\prj \k[\bfP])$.
Then they are chain-homotopy equivalent.
By the standard mapping-cylinder argument for
homotopy equivalences of complexes, there exist contractible complexes
$C\down$ and $D\down$ such that
\[
M\down \ds C\down \cong N\down \ds D\down
\]
in $\Cb(\prj \k[\bfP])$. Since contractible complexes are acyclic, this
gives (b).

($\Leftarrow$).
Since acyclic objects are zero in the derived category $\Db(\vect^\bfP)$, 
the existence of a canonical embedding $\Kb(\prj \k[\bfP]) \to \Db(\vect^\bfP)$ shows that all acyclic objects
in $\Kb(\prj \k[\bfP])$ are zero.
(Alternatively, 
Since contractible cones are zero in $\Kb(\prj \k[\bfP])$,
this assertion follows also from (2).)

(2)
We argue by induction on the number of nonzero terms of $A\down$.
Since there is nothing to prove if $A\down=0$, we may
assume $A\down \neq 0$. Let $m$ be the smallest integer such that
$A_m\neq 0$. Then $A_{m-1}=0$, and $A\down$ has the form
$$
\cdots \to A_{m+1} \ya{\partial^A_m} A_m \to 0 \to 0 \to \cdots.
$$
Since $A\down$ is acyclic, $\partial_{m}^A$ is an epimorphism in $\vect^\bfP$, which splits because $A_m$ is projective. 
Thus we may write
\[
A_{m+1}\cong \Ker(\partial_{m}^A)\oplus A_m
\]
in such a way that $\partial_{m}^A$ restricts to the identity on the
second summand.

It follows that the two-term subcomplex
\[
0 \to A_m \xrightarrow{\id_{A_m}} A_m \to 0
\]
splits off from $A\down$; this subcomplex $C\down$ is a contractible cone.
Hence
\[
A\down \cong C\down \ds A'\down
\]
in $\Cb(\prj \k[\bfP])$, where 
$A'\down$ is again an acyclic complex in $\Cb(\prj \k[\bfP])$ with fewer nonzero terms than $A\down$.
By the induction hypothesis, $A'\down$ is isomorphic to a finite direct
sum of contractible cones. Therefore so is $A\down$.
\end{proof}

\begin{ntn}
\label{ntn:Pad}
Let $M\down, E\down, N\down \in \Cb(\prj \k[\bfP])$.
We say that $E\down$ is a \emph{padding of $M\down$ by contractible cones} if $E\down \cong M\down \ds A\down$ in $\Cb(\prj \k[\bfP])$ for some
finite direct sum $A\down$ of contractible cones.
By $\Pad(M)$, we denote the set of all paddings of $M\down$ by contratible cones:
\begin{multline}
\Pad(M\down):= \{E\down \in \Cb(\prj \k[\bfP])
\mid E\down \cong M\down \ds A\down
\text{ in } \Cb(\prj \k[\bfP])\\
\text{ for some finite direct sum $A\down$ of
contractible cones}\}.
\end{multline}
Using this, we set
$$
\Pad(M\down, N\down):= \{(E\down, F\down) \in \Pad(M\down) \times \Pad(N\down) \mid |E\down| = |F\down|\}.
$$
\end{ntn}

The set $\Pad$ characterizes for two objects in $\Kb(\prj \k[\bfP])$ to be isomorphic as follows, which immediately follows by Proposition \ref{prp:htp-eq=upto-acyc}:

\begin{lemma}
Let $M\down, N\down \in \Cb(\prj \k[\bfP])$.
Then $M\down \cong N\down$ in
$\Kb(\prj \k[\bfP])$ if and only if
$$
\Pad(M\down \ds A\down) = \Pad(N\down \ds B\down)
$$
for some finite direct sums $A\down, B\down$ of contractible cones.
\end{lemma}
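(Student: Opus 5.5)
We must prove: $M\down \cong N\down$ in $\Kb(\prj \k[\bfP])$ if and only if $\Pad(M\down \ds A\down) = \Pad(N\down \ds B\down)$ for some finite direct sums $A\down, B\down$ of contractible cones. The plan is to derive both directions from Proposition~\ref{prp:htp-eq=upto-acyc}, using one observation: $\Pad(E\down)$ depends only on the isomorphism class of $E\down$ in $\Cb(\prj \k[\bfP])$. Indeed, if $E\down \cong E'\down$ in $\Cb(\prj \k[\bfP])$, then $X\down \cong E\down \ds C\down$ holds exactly when $X\down \cong E'\down \ds C\down$. Since the definition of $\Pad$ only involves isomorphisms in $\Cb(\prj \k[\bfP])$, this is immediate.

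For ($\Rightarrow$), assume $M\down \cong N\down$ in $\Kb(\prj \k[\bfP])$. Proposition~\ref{prp:htp-eq=upto-acyc}(1) gives acyclic $A\down, B\down \in \Cb(\prj \k[\bfP])$ with $M\down \ds A\down \cong N\down \ds B\down$ in $\Cb(\prj \k[\bfP])$. By Proposition~\ref{prp:htp-eq=upto-acyc}(2), each of $A\down$ and $B\down$ is isomorphic to a finite direct sum of contractible cones. Replacing them by these sums keeps the isomorphism, so we may assume $A\down$ and $B\down$ are finite direct sums of contractible cones. The observation above then gives $\Pad(M\down \ds A\down) = \Pad(N\down \ds B\down)$.

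For ($\Leftarrow$), assume $\Pad(M\down \ds A\down) = \Pad(N\down \ds B\down)$. Padding by the empty sum of cones shows $M\down \ds A\down \in \Pad(M\down \ds A\down)$, so $M\down \ds A\down \in \Pad(N\down \ds B\down)$. Hence $M\down \ds A\down \cong N\down \ds B\down \ds C\down$ in $\Cb(\prj \k[\bfP])$ for some finite direct sum $C\down$ of contractible cones. Each contractible cone $\Cone(\id_E)[a]$ is acyclic, and finite direct sums of acyclic complexes are acyclic. So $A\down$ and $B\down \ds C\down$ are acyclic, and Proposition~\ref{prp:htp-eq=upto-acyc}(1)($\Leftarrow$) yields $M\down \cong N\down$ in $\Kb(\prj \k[\bfP])$.

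I do not expect a real obstacle. The only points that need care are that the empty direct sum counts as a finite direct sum of contractible cones, and that the convention $\Pad(E\down)$ being closed under isomorphism is exactly what the definition provides. If the empty sum were disallowed, I would instead use that $E\down \ds \Cone(\id_0)$ equals $E\down$, since $\Cone(\id_0)=0$ is a contractible cone with $E=0$.
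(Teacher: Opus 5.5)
Your proof is correct and takes the same route as the paper, which simply says the lemma follows immediately from Proposition~\ref{prp:htp-eq=upto-acyc}. Your observation that $\Pad(E\down)$ depends only on the isomorphism class of $E\down$ in $\Cb(\prj \k[\bfP])$, together with $E\down \in \Pad(E\down)$ via the trivial padding, is exactly what is needed to carry out that deduction in both directions.
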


\begin{definition}
\label{dfn:matching-cpx}
Let $M\down, N\down \in \Cb(\prj \k[\bfP])$,
$(E\down,F\down)\in\Pad(M\down, N\down)$, and set
$$
\Mat(\partial^E_i) = {}^t[a_{x^{(i+1)}, x^{(i)}}^{(i)}]_{(x^{(i+1)}, x^{(i)})\, \in \smd(E_{i+1}) \times \smd(E_{i})},
\quad
\Mat(\partial^F_i) = {}^t[b_{y^{(i+1)}, y^{(i)}}^{(i)}]_{(y^{(i+1)}, y^{()})\, \in \smd(F_{i+1}) \times \smd(F_{i})}
$$
for all $i \in \bbZ$ as in \eqref{eq:proj-terms-cpx} and \eqref{eq:diffl-cpx}.
Then a \emph{pre-matching} of $(E\down, F\down)$ is a family of degreewise bijections
\begin{equation}
\label{eq:prematching}
B=(B_i)_{i\ge 0},\quad B_i\in\Bij(\smd(E_i),\,\smd(F_i)).
\end{equation}
A \emph{matching} of $(E\down, F\down)$ is a pre-matching of $(E\down, F\down)$
satisfying the \emph{compatibility condition on differentials}:
$$
a^{(i)}_{x^{(i+1)},x^{(i)}} = b^{(i)}_{B_{i+1}(x^{(i+1)}), B_i(x^{(i)})}
$$
for all $i \in \bbZ$, $x^{(i)} \in \smd(E_i), x^{(i+1)} \in \smd(E_{i+1})$
whenever $\smd(E_{i+1}) \times \smd(E_i) \ne \varnothing$.
Note that this condition implies that if $a^{(i)}_{x^{(i+1)},x^{(i)}} \ne 0$, then $B_{i+1}(x^{(i+1)}) \ge B_i(x^{(i)})$.

By $\Match(E\down, F\down)$ (resp.\ $\pMatch(E\down, F\down)$),
we denote the set of matchings (resp.\ pre-matchings) of $(E\down, F\down)$.

Define the \emph{cost} of a pre-matching $B$ as the $L^\infty$-type aggregate of the underlying poset metric,
\[
\cost(B)\ :=\
\sup\bigl\{\,\dist\bigl(U,\ B_i(U)\bigr) \bigm| i\in \bbZ,\ U \in \Summands(E_i)\,\bigr\} = \sup\{d_\bfP(x, B_i(x)) \mid i \in \bbZ, x \in \smd(E_i)\}.
\]
(Equivalently, $\cost(B)=\sup\{\dist(B_i^{-1}(V),V)\mid i\in \bbZ,\ V\in\Summands(F_i)\}$.)  
We refer to the quantity
\[
\distM(E\down,F\down)\ :=\ \inf_{\,B\in\Match(E\down,F\down)}\ \cost(B)
\]
as the \emph{matching distance} for complexes with a fixed size vector.
It is defined to be infinity if $\Match(E\down, F\down) = \varnothing$.
\end{definition}

\begin{lemma}\label{lem:reg-triangle-cpx}
Let $M\down, N\down, O\down \in \Cb(\prj k[\bfP])$, and 
$(E\down, F\down) \in \Pad(M\down, N\down)$,
$(F\down, G\down) \in \Pad(N\down, O\down)$.
Then
\[
\distM(E\down,G\down)\ \le\ \distM(E\down,F\down)\ +\ \distM(F\down,G\down).
\]
\end{lemma}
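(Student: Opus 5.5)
The plan is to compose matchings degreewise. Note first that $|E\down|=|F\down|=|G\down|$, since $(E\down,F\down)$ and $(F\down,G\down)$ both lie in the relevant $\Pad$ sets. If either $\Match(E\down,F\down)$ or $\Match(F\down,G\down)$ is empty, the right-hand side is $\infty$ and there is nothing to prove. So assume both are nonempty, and fix $B\in\Match(E\down,F\down)$ and $C\in\Match(F\down,G\down)$.

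Define the composite $D=(D_i)_{i\in\bbZ}$ by $D_i:=C_i\circ B_i\in\Bij(\smd(E_i),\smd(G_i))$. Since each $B_i$ and $C_i$ is a bijection of multisets (identified with sets of labelled elements as in the preliminaries), each $D_i$ is a bijection, so $D$ is a pre-matching of $(E\down,G\down)$.

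Next, check the compatibility condition on differentials. Write the coefficient matrices of $\partial^E_i,\partial^F_i,\partial^G_i$ with entries $a^{(i)},b^{(i)},c^{(i)}$. For $x^{(i+1)}\in\smd(E_{i+1})$ and $x^{(i)}\in\smd(E_i)$, the matching property of $B$ gives
\[
a^{(i)}_{x^{(i+1)},x^{(i)}}=b^{(i)}_{B_{i+1}(x^{(i+1)}),B_i(x^{(i)})}.
\]
Applying the matching property of $C$ to the elements $B_{i+1}(x^{(i+1)})$ and $B_i(x^{(i)})$ gives
\[
b^{(i)}_{B_{i+1}(x^{(i+1)}),B_i(x^{(i)})}=c^{(i)}_{D_{i+1}(x^{(i+1)}),D_i(x^{(i)})}.
\]
Hence $D\in\Match(E\down,G\down)$.

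For the cost, the triangle inequality for $d_\bfP$ gives
\[
d_\bfP\bigl(x,D_i(x)\bigr)\le d_\bfP\bigl(x,B_i(x)\bigr)+d_\bfP\bigl(B_i(x),C_i(B_i(x))\bigr)\le\cost(B)+\cost(C).
\]
Taking the supremum over $i$ and $x$ yields
\[
\distM(E\down,G\down)\le\cost(D)\le\cost(B)+\cost(C).
\]
Taking the infimum over $B$ and $C$ gives the claim.

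The only point needing care, and the main obstacle, is the bookkeeping: the coefficient matrices must be taken with respect to the same fixed decomposition of $F\down$ in both pairs, so that the entries $b^{(i)}$ mean the same thing in both compatibility equations. I will fix one decomposition of $F\down$ as in \eqref{eq:proj-terms-cpx} and \eqref{eq:diffl-cpx} and use it for both $B$ and $C$. This is implicit in the definition, since the matching distance depends on the chosen summand decompositions with their normalized coefficient matrices.
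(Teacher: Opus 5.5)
Your proposal is correct and follows the paper's proof essentially step for step. You compose degreewise as $D_i=C_i\circ B_i$, verify compatibility by chaining the two coefficient equalities, bound the cost by the triangle inequality for $d_\bfP$, and then pass to infima. Your remark about using one fixed decomposition of $F\down$ for both pairs is exactly what the paper tacitly assumes, since elements of $\Pad$ are treated as concrete complexes of direct sums of representables with their own coefficient matrices.
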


\begin{proof}
If $(E\down, F\down)$ or $(F\down, G\down)$ has no matching, then
the right hand side is infinity, and the inequality holds.
Therefore, we may assume that both of them have some matchings.
Choose matchings
\begin{equation}
\label{eq:B-C-cpx}
B\in\Match(E\down,F\down),
\qquad
C\in\Match(F\down,G\down).
\end{equation}
Define $D\in\Match(E\down,G\down)$ degreewise by
\[
D_i := C_i\circ B_i
\qquad (i\ge 0),
\]
so that $D_i\colon\Summands(E_i)\to\Summands(G_i)$ is again a bijection.
Then $D \in \pMatch(E\down, G\down)$.
Moreover, $D$ satisfies the compatibility condition on differentials for $(E\down, G\down)$,
and hence $D \in \Match(E\down, G\down)$.
Indeed, set $\Mat(\partial^E_i), \Mat(\partial^F_i)$ to be as in Definition 
\ref{dfn:matching}, and
$$
\Mat(\partial^G_i):=
{}^t[c_{z^{(i+1)}, z^{(i)}}^{(i)}]_{(z^{(i+1)}, z^{(i)})\, \in \smd(G_{i+1}) \times \smd(G_{i})}
$$
for all $i \in \bbZ$.
Then it follows from \eqref{eq:B-C-cpx} that
$$
a^{(i)}_{x^{(i+1)},x^{(i)}} = b^{(i)}_{B_{i+1}(x^{(i+1)}), B_i(x^{(i)})}
= c^{(i)}_{C_{i+1}(B_{i+1}(x^{(i+1)})), C_i(B_i(x^{(i)}))}
= c^{(i)}_{D_{i+1}(x^{(i+1)}), D_i(x^{(i)})}
$$
for all $i \in \bbZ$, $x^{(i)} \in \smd(E_i), x^{(i+1)} \in \smd(E_{i+1})$,
as desired.

For any $i$ and any $U\in\Summands(E_i)$, the metric $\dist$ on indecomposable
projectives satisfies
\[
\dist\bigl(U,D_i(U)\bigr)
=
\dist\bigl(U,C_i(B_i(U))\bigr)
\le
\dist\bigl(U,B_i(U)\bigr)
+
\dist\bigl(B_i(U),C_i(B_i(U))\bigr).
\]
Taking the supremum over all $i$ and $U$ gives
\[
\cost(D)\ \le\ \cost(B)\ +\ \cost(C),
\]
and therefore
\[
\distM(E\down,G\down)
\ \le\
\distM(E\down,F\down)\ +\ \distM(F\down,G\down). \qedhere
\]
\end{proof}

\begin{definition}
\label{dfn:CM-dist-cpx}
For $M\down, N\down \in \Cb(\prj \k[\bfP])$ (not necessarily of the same size vector), the \emph{complex matching distance} is
\[
\distCM\bigl(M\down, N\down\bigr)
\ :=\ 
\inf_{\ (E\down,F\down)\in\Pad(M\down, N\down)}\ \distM(E\down,F\down).
\]
We adopt the extended-value convention that $\distCM\bigl(M\down, N\down\bigr)=\infty$ if $\Pad(M\down, N\down)=\varnothing$.
\end{definition}

\begin{remark}
By using $\pMatch(E\down, F\down)$ instead of $\Match(E\down, F\down)$ for a pair $(E\down, F\down) \in \Pad(M\down, N\down)$, we can define other functions
$$
\begin{aligned}
\distM'(E\down, F\down)&:= \inf_{B \in \pMatch(E\down, F\down)} \cost(B),
\ \text{and}\\
\distCM'(M\down, N\down)&:= \inf_{(E\down, F\down) \in \Pad(M\down, N\down)}
\distM'(E\down, F\down).
\end{aligned}
$$
\end{remark}

\begin{theorem}
\label{thm:bneck-pseudometric-cpx}
The function $\distCM$ defines an extended pseudometric on $\Cb(\prj \k[\bfP])$.
\end{theorem}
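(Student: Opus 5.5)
We must verify three properties: $\distCM(M\down,M\down)=0$, symmetry, and the triangle inequality. Nonnegativity is immediate, since every cost is a supremum of values of $d_\bfP$.

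\emph{Zero on the diagonal.} Take $(E\down,F\down)=(M\down,M\down)\in\Pad(M\down,M\down)$ with the empty padding. Fixing one decomposition \eqref{eq:proj-terms-cpx} of each $M_i$ and taking $B_i$ to be the identity bijection of $\smd(M_i)$, the compatibility condition on differentials holds trivially because both sides use the same coefficient matrices. The cost is $0$, so $\distCM(M\down,M\down)=0$.

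\emph{Symmetry.} If $(E\down,F\down)\in\Pad(M\down,N\down)$, then $(F\down,E\down)\in\Pad(N\down,M\down)$. If $B$ is a matching of $(E\down,F\down)$, then $B^{-1}=(B_i^{-1})_i$ is a matching of $(F\down,E\down)$: substituting $y=B(x)$ in the compatibility identity gives it in the reverse direction. Moreover $\cost(B^{-1})=\cost(B)$ by the equivalent formula for the cost and the symmetry of $d_\bfP$. Taking infima gives $\distCM(N\down,M\down)=\distCM(M\down,N\down)$.

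\emph{Triangle inequality.} Assume both $\distCM(M\down,N\down)$ and $\distCM(N\down,O\down)$ are finite, since otherwise there is nothing to prove. Choose $(E\down,F\down)\in\Pad(M\down,N\down)$ and $(F'\down,G\down)\in\Pad(N\down,O\down)$ with matchings $B$ and $C$ of costs within $\epsilon$ of the respective infima. Lemma~\ref{lem:reg-triangle-cpx} requires the two pairs to share the \emph{same} middle complex, and here $F\down\cong N\down\ds A\down$ while $F'\down\cong N\down\ds A'\down$ with possibly different cone sums. This is the main obstacle.

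To overcome it, I pad further. Set $\tilde E\down:=E\down\ds A'\down$, $\tilde F\down:=N\down\ds A\down\ds A'\down$ and $\tilde G\down:=G\down\ds A\down$. Then $(\tilde E\down,\tilde F\down)\in\Pad(M\down,N\down)$ and $(\tilde F\down,\tilde G\down)\in\Pad(N\down,O\down)$, with equal size vectors because $|E\down|=|F\down|$ and $|F'\down|=|G\down|$.

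The required auxiliary claim has two parts.
\begin{enumerate}[label=(\roman*)]
\item A matching of $(E\down,F\down)$ extends to a matching of $(E\down\ds A'\down,\,F\down\ds A'\down)$ of the same cost. Match the summands of $A'\down$ to themselves by the identity. The coefficient matrices are block diagonal, so compatibility holds block by block, and the new summands contribute cost $0$.
\item The notions are invariant under isomorphism in $\Cb(\prj\k[\bfP])$.
\end{enumerate}

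Part (ii) is the delicate point. The coefficient matrices depend on the chosen decomposition, so I read the definition as follows: $\Match$ of a pair of objects allows any choice of decompositions, i.e.\ $\distM$ is an infimum over representatives. Under this reading, transporting a matching along the isomorphism $F\down\ds A'\down\cong N\down\ds A\down\ds A'\down\cong F'\down\ds A\down$ gives matchings of the same cost for $(\tilde E\down,\tilde F\down)$ and $(\tilde F\down,\tilde G\down)$.

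Lemma~\ref{lem:reg-triangle-cpx} then yields
\[
\distCM(M\down,O\down)\le\distM(\tilde E\down,\tilde G\down)\le \cost(B)+\cost(C),
\]
and letting $\epsilon\to0$ gives the triangle inequality.
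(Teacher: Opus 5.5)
\textbf{There is a genuine gap at step (ii), at the same point the paper's proof passes over.} Your diagonal and symmetry arguments match the paper's. Your $\tilde E\down,\tilde F\down,\tilde G\down$ with part (i) are exactly the paper's $E^*\down$, $H\down$, $G^*\down$ with the matchings $B_i\sqcup\id$ and $\id\sqcup B'_i$. The paper simply writes $F\down=C\down\ds N\down$ and $F'\down=N\down\ds C'\down$ ``by definition'', although Notation~\ref{ntn:Pad} gives these only up to isomorphism in $\Cb(\prj\k[\bfP])$. You were right to isolate this as the crux, but (ii) does not resolve it.

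Matchings do not transport along chain isomorphisms that are not permutations of summands, because such isomorphisms change the coefficient matrices that the compatibility condition compares. Example~\ref{ex:bneck-2d} shows this: the same $B$ is a matching for $(P^M\down,F'\down)$ but not for $(P^M\down,F\down)$, although $F'\down\cong F\down$. Reading $\distM$ as an infimum over representatives does make $\distM(\tilde E\down,\tilde F\down)\le\cost(B)$ and $\distM(\tilde F\down,\tilde G\down)\le\cost(C)$ true. But these bounds are witnessed on two different representatives of $\tilde F\down$, namely $F\down\ds A'\down$ and $F'\down\ds A\down$. Lemma~\ref{lem:reg-triangle-cpx} forms $C_i\circ B_i$ on one fixed summand set carrying one fixed set of coefficient matrices. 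With a non-permutation isomorphism $\psi$ in the middle, $C_i\circ\psi_i\circ B_i$ is not a bijection of summands, so your final display is unjustified.

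\textbf{The gap cannot be closed: with isomorphism-closed paddings the triangle inequality fails.}

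\emph{Setup.} Take $\bfP=\{r,p,q,q_1,q_3\}$ with $q<p<r$, $q_3<r$ and no other relations. Set $d_\bfP(q,q_1)=d_\bfP(q,q_3)=1$ and $d_\bfP(q_1,q_3)=2$, and put all other pairs of distinct points at distance $10$. Let $M\down$, $N\down$, $O\down$ be the two-term complexes $\kP{r}\to\kP{p}\ds\kP{q_1}$, $\kP{r}\to\kP{p}\ds\kP{q}$ and $\kP{r}\to\kP{p}\ds\kP{q_3}$, with coefficient matrices $\bsmat{1\\0}$, $\bsmat{1\\0}$ and $\bsmat{1\\1}$ respectively.

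\emph{The two short sides.} The bijection $q_1\mapsto q$ gives $\distCM(M\down,N\down)\le 1$. The automorphism $\bsmat{1&0\\ \ro(p\ge q)&1}$ of $\kP{p}\ds\kP{q}$ gives $N\down\cong N'\down$, where $N'\down$ has coefficient matrix $\bsmat{1\\1}$. The bijection $q\mapsto q_3$ then gives $\distCM(N\down,O\down)\le 1$. Note that $(N\down,O\down)$ itself has no matching.

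\emph{The long side.} Suppose $B$ is a matching of cost $\le 2$ for some $(E\down,G\down)\in\Pad(M\down,O\down)$. Encode it as a complex $\tilde Z$ of projectives over $\mathcal S=\{(x,y)\in\bfP\times\bfP\mid d_\bfP(x,y)\le 2\}$, with summands $\k[\mathcal S]_{(x,B_i(x))}$ and the same coefficient matrices. The two projections relabel $\tilde Z$ back to $E\down$ and $G\down$. The fibres $\{r\}$, $\{p\}$ and $\{q,q_1,q_3\}$ are antichains, so Betti numbers add along the projections. This forces the minimal part of $\tilde Z$ to be $\k[\mathcal S]_{(r,r)}\to\k[\mathcal S]_{(p,p)}\ds\k[\mathcal S]_{(q_1,q_3)}$. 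Its second component vanishes because $r\not\ge q_1$. Hence $G\down\cong(\kP{r}\to\kP{p})\ds\kP{q_3}\ds(\text{cones})$. Since $O\down$ is indecomposable, this contradicts Krull--Schmidt. Therefore $\distCM(M\down,O\down)>2$.

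Reading the paddings literally ($F\down=N\down\ds C\down$ on the nose) would make both your argument and the paper's go through. However, $\distCM$ would then depend on the chosen representative rather than on its isomorphism class.
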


\begin{proof}
First, it holds that $\distCM(M\down, M\down)=0$ for every $M\down$.
Indeed, $B:= (\id_{\smd(M_i)})_{i \in \bbZ}$ is a matching
of $(M\down, M\down)$, and $\cost(B) =0$.

Symmetry holds because any matching $B=(B_i)$ between two padded complexes
induces an inverse matching $B^{-1}=(B_i^{-1})$ of the same cost in the
opposite direction.  Hence
\[
\distCM(M\down, N\down)
=
\distCM(N\down, M\down).
\]

To show the triangle inequality, let $M\down,N\down,O\down \in \Cb(\prj \k[\bfP])$.
We have to verify
\begin{equation}
\label{eq:tri-ineq-cpx}
\distCM(M\down, O\down)
\ \le\
\distCM(M\down, N\down)\ +\ \distCM(N\down, O\down).
\end{equation}
If $\Pad(M\down, N\down) = \varnothing$ or $\Pad(N\down, O\down) = \varnothing$,
then the right hand side is infinity, and the inequality holds.
Therefore, we may assume that 
$\Pad(M\down, N\down) \ne \varnothing$ and $\Pad(N\down, O\down) \ne \varnothing$.
If $\Match(E\down, F\down) = \varnothing$
for all $(E\down, F\down) \in \Pad(M\down, N\down)$, then again
the right hand is infinity.
Hence we may assume that $\Match(E\down, F\down) \ne \varnothing$
for some $(E\down, F\down) \in \Pad(M\down, N\down)$.
Similarly, we may assume that 
$\Match(F'\down, G\down) \ne \varnothing$
for some $(F'\down, G\down) \in \Pad(N\down, O\down)$.
Let
$$
\begin{aligned}
(E\down, F\down) &\in \Pad(M\down, N\down) \text{ with }
B \in \Match(E\down, F\down), \text{and }\\ 
(F'\down, G\down) &\in \Pad(N\down, O\down) \text{ with }
B' \in \Match(F\down, O\down).
\end{aligned}
$$
By definition, both $F\down$ and $F'\down$ have the forms
$$
F\down = C\down \ds N\down,\quad
F'\down = N\down \ds C'\down
$$
for some $C\down$ and $C'\down$ that are finite direct sums of contractible cones.
Set
$$
E^*\down:= E\down \ds C'\down,\, H\down:= C\down \ds P^N\down \ds C'\down,
\text{ and } G^*\down:= C\down \ds G\down.
$$
Then as is easily checked we have
$(B_i \sqcup \id_{\smd(C'_i)})_{i \ge 0} \in \Match(E^*\down, H\down)$
and $(\id_{\smd(C_i)} \sqcup B'_i)_{i \ge 0} \in \Match(H\down, G^*\down)$,
which are illustrated as
$$
\smd(E_i) \sqcup \smd(C'_i) \ya{B_i \sqcup \id_{\smd(C'_i)}}
\smd(F_i) \sqcup \smd(C'_i) = \smd(C_i) \sqcup\smd(F'_i)
\ya{\id_{\smd(C_i)} \sqcup B'_i} \smd(C_i) \sqcup \smd(G_i).
$$

Then since $|E\down| = |N\down| + |C\down|$ and $|G\down| = |N\down| + |C'\down|$,
we have 
$$
|E^*\down| = |H\down| = |G^*\down| = |N\down| + |C\down| + |C'\down|\ \ .
$$
Thus 
$$
(E^*\down,H\down)\in\Pad(M\down, N\down),
\qquad
(H\down,G^*\down)\in\Pad(N\down, O\down).
$$
Applying Lemma~\ref{lem:reg-triangle-cpx} yields
\[
\distM(E^*\down,G^*\down)
\ \le\
\distM(E^*\down,H\down)\ +\ \distM(H\down,G^*\down).
\]
Therefore \eqref{eq:tri-ineq-cpx} holds.
\end{proof}

The proof above verifies the following:

\begin{corollary}
The function $\distCM'$ also defines an extended pseudometric on $\Cb(\prj \k[\bfP])$.
\end{corollary}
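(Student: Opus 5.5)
The corollary says that $\distCM'$ is an extended pseudometric on $\Cb(\prj \k[\bfP])$. I would rerun the proof of Theorem~\ref{thm:bneck-pseudometric-cpx} word for word, with $\Match$ replaced by $\pMatch$ and $\distM$ by $\distM'$. The only inputs that proof used about matchings are that identities, inverses, composites and disjoint unions of matchings are again matchings. Each step goes through for pre-matchings, and more easily, since the compatibility condition on differentials no longer has to be checked.

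For reflexivity, $(\id_{\smd(M_i)})_{i}$ is a pre-matching of $(M\down,M\down)$ with cost $0$. For symmetry, if $B\in\pMatch(E\down,F\down)$ then $B^{-1}=(B_i^{-1})_i$ lies in $\pMatch(F\down,E\down)$. It has the same cost, because $d_\bfP$ is symmetric and the two formulas for $\cost(B)$ in Definition~\ref{dfn:matching-cpx} agree.

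For the triangle inequality, I would first prove the analogue of Lemma~\ref{lem:reg-triangle-cpx} for $\distM'$. Given $B\in\pMatch(E\down,F\down)$ and $C\in\pMatch(F\down,G\down)$, the degreewise composite $D_i:=C_i\circ B_i$ is a bijection $\smd(E_i)\to\smd(G_i)$, so $D\in\pMatch(E\down,G\down)$. The triangle inequality for $d_\bfP$ then gives $\cost(D)\le\cost(B)+\cost(C)$.

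Next I would take the padding step verbatim from the theorem. Write $F\down=C\down\ds N\down$ and $F'\down=N\down\ds C'\down$, and set
\[
E^*\down:=E\down\ds C'\down,\quad H\down:=C\down\ds N\down\ds C'\down,\quad G^*\down:=C\down\ds G\down .
\]
The bijections $B_i\sqcup\id_{\smd(C'_i)}$ and $\id_{\smd(C_i)}\sqcup B'_i$ are pre-matchings of $(E^*\down,H\down)$ and $(H\down,G^*\down)$. Padding by identities adds no cost, so they have the same costs as $B$ and $B'$. The size vectors agree, so $(E^*\down,H\down)\in\Pad(M\down,N\down)$ and $(H\down,G^*\down)\in\Pad(N\down,O\down)$. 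The regular triangle inequality then gives
\[
\distM'(E^*\down,G^*\down)\le\cost(B)+\cost(B').
\]
Since $(E^*\down,G^*\down)\in\Pad(M\down,O\down)$, taking infima yields the inequality for $\distCM'$.

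The infinite cases are handled as in the theorem. If $\Pad(M\down,N\down)$ or $\Pad(N\down,O\down)$ is empty, the right-hand side is $\infty$. If the padded sets are nonempty, then $\pMatch$ of any pair with equal size vectors is automatically nonempty, so there is nothing further to check.

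I expect no real obstacle. The only point needing care is that the cone summands $C\down,C'\down$ must be matched by identities, so that the padding contributes zero cost.
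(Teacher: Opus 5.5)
Your proposal is correct and follows essentially the same route as the paper. The paper obtains the corollary by observing that the proof of Theorem~\ref{thm:bneck-pseudometric-cpx} goes through verbatim with pre-matchings in place of matchings: identity, inverse, degreewise composite, and padding by identities on the cone summands. That is exactly what you carry out, and without the compatibility condition on differentials there is nothing further to check.
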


\begin{ntn}
\label{ntn:permutation}
For a permutation $\si$ of the set $[n]\ (n \ge 1)$,
we set $\Mat_\si$ to be the permutation matrix
$$
\Mat_\si:= [\de_{i,\si(j)}]_{(i,j) \in [n]^2}.
$$
For a pre-matching $B$ in \eqref{eq:prematching}, if
$\smd(E_i) = (x_1, \dots, x_{d_i})$ and $\smd(F_i) = (y_1, \dots, y_{d_i})$,
then for each $i \in \bbZ$, the bijection $B_i \colon \smd(E_i) \to \smd(F_i))$ is regarded as the permutation $\si_i$
of $(1, \dots, d_i)$ defined by setting $\si_i(j):= k$ if $B_i(x_j) = y_{k}$ with $j, k \in \{1, \dots, d_i\}$.
\end{ntn}

To strengthen Theorem \ref{thm:bneck-pseudometric-cpx} by the proposition below, we first interpret the compatibility condition on differentials in terms of commutative diagram.
The following is easy to verify, and is left to the reader:

\begin{lemma}
\label{lem:matching-comm-cpx}
Let $M\down, N\down \in \Cb(\prj \k[\bfP])$,
$(E\down, F\down) \in \Pad(M\down, N\down)$, and
$B \in \pMatch(E\down, F\down)$.
Then $B \in \Match(E\down, F\down)$ if and only if
the diagram
\begin{equation}
\label{eq:matching-eq-cpx}
\begin{tikzcd}[column sep=40pt]
\Ds_{x^{(i+1)} \in \smd(E_{i+1})}\k x^{(i+1)} & \Ds_{x^{(i)} \in \smd(E_{i})}\k x^{(i)}\\
\Ds_{y^{(i+1)} \in \smd(F_{i+1})} \k y^{(i+1)}& \Ds_{y^{(i)} \in \smd(F_{i})} \k y^{(i)}
\Ar{1-1}{1-2}{"\Mat(\partial^E_i)"}
\Ar{2-1}{2-2}{"\Mat(\partial^F_i)"'}
\Ar{1-1}{2-1}{"\Mat_{B_{i+1}}"'}
\Ar{1-2}{2-2}{"\Mat_{B_{i}}"}
\end{tikzcd}
\end{equation}
is commutative for all $i \in \bbZ$.
\end{lemma}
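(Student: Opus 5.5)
The plan is to work one degree $i$ at a time, since both sides of the claimed equivalence are conditions on the single square \eqref{eq:matching-eq-cpx} for that $i$. Fix $i$ and order $\smd(E_i)=(x_1,\dots,x_{d_i})$, $\smd(F_i)=(y_1,\dots,y_{d_i})$, and likewise in degree $i+1$. Let $\si_i,\si_{i+1}$ be the permutations attached to $B_i,B_{i+1}$ as in Notation~\ref{ntn:permutation}.

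First I record the action of the permutation matrix on basis vectors. Since $\Mat_{\si}=[\de_{k,\si(j)}]$, the $j$-th column has a single $1$, in row $\si(j)$. Hence $\Mat_{B_i}$ sends the basis vector $x_j$ to $y_{\si_i(j)}=B_i(x_j)$; that is, $\Mat_{B_i}$ is the linear extension of $B_i$ on bases. The same holds in degree $i+1$.

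Next I compute the two composites on a basis vector $x^{(i+1)}\in\smd(E_{i+1})$. By the convention $\Mat(\partial^E_i)={}^t[a^{(i)}_{x^{(i+1)},x^{(i)}}]$, the column of $\Mat(\partial^E_i)$ indexed by $x^{(i+1)}$ is $\sum_{x^{(i)}} a^{(i)}_{x^{(i+1)},x^{(i)}}\,x^{(i)}$. Applying $\Mat_{B_i}$ gives
\[
\sum_{x^{(i)}} a^{(i)}_{x^{(i+1)},x^{(i)}}\,B_i(x^{(i)}).
\]
The other way around the square sends $x^{(i+1)}$ first to $B_{i+1}(x^{(i+1)})$ and then to
\[
\sum_{y^{(i)}} b^{(i)}_{B_{i+1}(x^{(i+1)}),y^{(i)}}\,y^{(i)}.
\]
Since $B_i$ is a bijection, I reindex this second sum by $y^{(i)}=B_i(x^{(i)})$. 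Comparing coefficients of the basis vector $B_i(x^{(i)})$, the two composites agree on every basis vector exactly when
\[
a^{(i)}_{x^{(i+1)},x^{(i)}}=b^{(i)}_{B_{i+1}(x^{(i+1)}),B_i(x^{(i)})}
\]
for all $x^{(i)}$ and $x^{(i+1)}$. This is precisely the compatibility condition of Definition~\ref{dfn:matching-cpx} in degree $i$.

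Taking this over all $i\in\bbZ$ gives the equivalence. When $\smd(E_{i+1})\times\smd(E_i)=\varnothing$, the square involves a zero space, so it commutes trivially, and the compatibility condition is vacuous in that degree. That case therefore needs no separate work.

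The only real obstacle is bookkeeping: keeping the transpose convention in $\Mat(\partial)$ consistent with the column convention of $\Mat_\si$. I would check both on a $1\times 1$ example before writing the general computation.
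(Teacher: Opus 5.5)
Your proof is correct: you identify $\Mat_{B_i}$ with the linear extension of $B_i$ on the basis indexed by $\smd(E_i)$, compare the coefficient of $B_i(x^{(i)})$ in the two composites applied to $x^{(i+1)}$, and this recovers exactly the compatibility condition degree by degree, with the degenerate case handled properly. The paper leaves this lemma to the reader, and your computation is precisely the routine verification intended there.
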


\begin{proposition}
\label{prp:metric-on-isoclasses-cpx}
For any $M\down, N\down \in \Cb(\prj \k[\bfP])$, the following are equivalent:
\begin{enumerate}[label=(\arabic*)]
\item
$\distCM(M\down, N\down) = 0,$ and
\item 
$M\down \cong N\down$ in $\Kb(\prj \k[\bfP])$.
\end{enumerate}
\end{proposition}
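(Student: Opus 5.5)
The plan is to prove the two implications separately, using that $\bfP$ is finite so all infima are attained, and then to convert a zero-cost matching into an honest isomorphism of complexes via Lemma~\ref{lem:matching-comm-cpx}.

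For (2)$\Rightarrow$(1): assume $M\down \cong N\down$ in $\Kb(\prj \k[\bfP])$. By Proposition~\ref{prp:htp-eq=upto-acyc}(1), there are acyclic $A\down, B\down$ with $M\down \ds A\down \cong N\down \ds B\down$ in $\Cb(\prj \k[\bfP])$. By part (2) of the same proposition, $A\down$ and $B\down$ are finite direct sums of contractible cones. Hence $E\down := M\down \ds A\down \in \Pad(M\down)$ and $E\down \in \Pad(N\down)$ as well, so $(E\down, E\down) \in \Pad(M\down, N\down)$. The identity pre-matching $(\id_{\smd(E_i)})_i$ is a matching of cost $0$. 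Therefore $\distCM(M\down,N\down)=0$.

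For (1)$\Rightarrow$(2): the values $\cost(B)$ lie in the finite set $\{d_\bfP(x,y)\}$. So if the infimum is $0$, there exist $(E\down,F\down)\in\Pad(M\down,N\down)$ and $B\in\Match(E\down,F\down)$ with $\cost(B)=0$. Since $d_\bfP$ is a metric, $B_i(x)=x$ for every $x\in\smd(E_i)$. Fix decompositions $E_i=\Ds_{j}\k[\bfP]_{x_j}$ and $F_i=\Ds_{k}\k[\bfP]_{y_k}$. Define $\phi_i\colon E_i\to F_i$ as the isomorphism sending the summand $\k[\bfP]_{x_j}$ identically onto $\k[\bfP]_{B_i(x_j)}=\k[\bfP]_{x_j}$. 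Its normalized matrix is $\Mat_{B_i}$ with entries $\ro(x\ge x)=\id$.

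The key step is to check that $\phi\down$ is a chain map, i.e.\ $\partial^F_i\phi_{i+1}=\phi_i\partial^E_i$. Both sides are morphisms $E_{i+1}\to F_i$ between direct sums of indecomposable projectives. Because $B$ fixes points, composing with the permutation-type isomorphisms just permutes the entries $a\,\ro(x\ge y)$ without changing the $\ro$-factors. So the coefficient matrices of the two sides are $\Mat(\partial^F_i)\Mat_{B_{i+1}}$ and $\Mat_{B_i}\Mat(\partial^E_i)$. By Lemma~\ref{lem:matching-comm-cpx} these agree. The uniqueness in Lemma~\ref{lem:mor-betw-proj} then gives equality of the morphisms; normalization holds because $a\ne0$ forces $x\ge y$.

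Thus $\phi\down\colon E\down\to F\down$ is an isomorphism in $\Cb(\prj\k[\bfP])$. It follows that $M\down\ds A\down\cong E\down\cong F\down\cong N\down\ds B\down$ with $A\down,B\down$ sums of contractible cones, hence acyclic, and Proposition~\ref{prp:htp-eq=upto-acyc}(1) gives $M\down\cong N\down$ in $\Kb(\prj\k[\bfP])$.

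The only delicate point is the bookkeeping identifying coefficient matrices of composites with products of $\Mat$'s. This is where the vanishing cost is essential: for a general matching, the $\ro(x\ge B(x))$ factors need not exist as identities, and $\phi_i$ would not even be defined.
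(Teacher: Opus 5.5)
Your proposal is correct and follows the paper's proof. For (2)$\Rightarrow$(1) you use Proposition~\ref{prp:htp-eq=upto-acyc} and the identity matching on $(E\down,E\down)$; for (1)$\Rightarrow$(1)'s converse direction, (1)$\Rightarrow$(2), you turn a cost-zero matching into the degreewise isomorphisms $\hat{B}_i$ and use Lemma~\ref{lem:matching-comm-cpx} to see that they form a chain map. You also make explicit two points the paper leaves tacit: the infimum is attained because $d_\bfP$ takes finitely many values, and coefficient matrices multiply under composition, so uniqueness in Lemma~\ref{lem:mor-betw-proj} upgrades the matrix identity to $\partial^F_i\phi_{i+1}=\phi_i\partial^E_i$.
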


\begin{proof}
 (1) $\Rightarrow$ (2).
 Assume (1) holds.
 Then there exists a pair $(E\down, F\down) \in \Pad(M\down, N\down)$ and some $B \in \Match(E\down, F\down)$ such that $\cost(B) = 0$.
 Thus we have a commutative diagram \eqref{eq:matching-eq-cpx}.
 Here $\cost(B) = 0$ shows that for each $i \in \bbZ$ with
 $\smd(E_i) \ne \varnothing$ and each $x^{(i)} \in \smd(E_{i})$, we have
\begin{equation}
\label{eq:B-fix-cpx}
B(x^{(i)}) = x^{(i)}.
\end{equation}
 Therefore, the commutative diagram \eqref{eq:matching-eq-cpx} yields a commutative diagram
 $$
 \begin{tikzcd}
 E_{i+1} & E_i\\
 F_{i+1} & F_i
 \Ar{1-1}{1-2}{"\partial^E_i"}
 \Ar{2-1}{2-2}{"\partial^F_i"'}
 \Ar{1-1}{2-1}{"\hat{B}_{i+1}"'}
 \Ar{1-2}{2-2}{"\hat{B}_{i}"}
 \end{tikzcd}
 $$
 for all $i \in \bbZ$, where we set
\begin{equation}
\label{eq:hat-B}
\hat{B}_i:= [\de_{y^{(i)}, B_i(x^{(i)})}\,\ro(x^{(i)} \ge B_i(x^{(i)}))]_{(y^{(j)},x^{(i)})},
\end{equation}
each of which turns out to be an isomorphisms by \eqref{eq:B-fix-cpx}.
Thus $E\down \cong F\down$, which shows that
$M\down \ds A\down \cong N\down \ds B\down$ in $\Cb(\prj \k[\bfP])$
for some acyclic complexes $A\down, B\down$ by definition of
$\Pad(M\down, N\down)$.
Hence (2) holds.

(2) $\Rightarrow$ (1).
If (2) holds, then by Proposition \ref{prp:htp-eq=upto-acyc},
$M\down \ds A\down \cong N\down \ds B\down$ in $\Cb(\prj \k[\bfP])$
for some finite direct sums $A\down, B\down$ of contractible cones.
By setting $E\down:= M\down \ds A\down$,
we have $(E\down, E\down) \in \Pad(M\down, N\down)$.
Since $\distM(E\down, E\down) = 0$, (1) holds.
\end{proof}

By combining Theorem \ref{thm:bneck-pseudometric-cpx} and
Proposition \ref{prp:metric-on-isoclasses-cpx}, we have
the following.

\begin{theorem}
\label{thm:metric_homotopy}
The function $\distCM$ defines an extended metric on $\Kb(\prj \k[\bfP])$.
\end{theorem}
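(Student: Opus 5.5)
The plan is to show that $\distCM$ is well defined on isomorphism classes of $\Kb(\prj \k[\bfP])$, and then assemble Theorem~\ref{thm:bneck-pseudometric-cpx} with Proposition~\ref{prp:metric-on-isoclasses-cpx}.

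\emph{Step 1: invariance under homotopy equivalence.} Suppose $M\down\cong M'\down$ in $\Kb(\prj \k[\bfP])$. By Proposition~\ref{prp:metric-on-isoclasses-cpx}, $\distCM(M\down,M'\down)=0$. The triangle inequality from Theorem~\ref{thm:bneck-pseudometric-cpx} then gives
\[
\distCM(M\down,N\down)\le \distCM(M\down,M'\down)+\distCM(M'\down,N\down)=\distCM(M'\down,N\down),
\]
and the symmetric argument gives the reverse inequality. Hence $\distCM(M\down,N\down)=\distCM(M'\down,N\down)$. Applying the same argument in the second variable shows that $\distCM$ depends only on the homotopy classes of its arguments. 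It therefore descends to a function on pairs of objects of $\Kb(\prj \k[\bfP])$, which have the same objects as $\Cb(\prj \k[\bfP])$ up to isomorphism in $\Kb$.

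\emph{Step 2: metric axioms on $\Kb$.} Nonnegativity, $\distCM(M\down,M\down)=0$, symmetry and the triangle inequality are inherited directly from Theorem~\ref{thm:bneck-pseudometric-cpx}, since the descended function is evaluated on representatives. For definiteness, if the descended distance between the classes of $M\down$ and $N\down$ is $0$, then $\distCM(M\down,N\down)=0$ for the representatives. Proposition~\ref{prp:metric-on-isoclasses-cpx} (1)$\Rightarrow$(2) then gives $M\down\cong N\down$ in $\Kb(\prj \k[\bfP])$. Values may be $\infty$, which is why the metric is only extended.

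\emph{Expected obstacle.} The only point needing care is Step~1: the triangle inequality must be available on all of $\Cb$, including pairs where some $\Pad$ set is empty. This is already handled by the extended-value convention in Theorem~\ref{thm:bneck-pseudometric-cpx}. The nontrivial content, namely that zero distance forces a homotopy equivalence, is exactly Proposition~\ref{prp:metric-on-isoclasses-cpx}, which may be assumed, so no further work beyond this bookkeeping is expected.
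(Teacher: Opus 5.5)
Your proposal is correct and follows the paper's route. The paper proves this theorem simply by combining Theorem~\ref{thm:bneck-pseudometric-cpx} with Proposition~\ref{prp:metric-on-isoclasses-cpx}, as in your Step~2; your Step~1 makes explicit the descent of $\distCM$ to $\Kb$-isomorphism classes, which the paper leaves implicit, and this step could also be done without the triangle inequality by noting that $\distCM(M\down\ds A\down,N\down)=\distCM(M\down,N\down)$ for any finite sum $A\down$ of contractible cones (pad both sides by $A\down$) and then invoking Proposition~\ref{prp:htp-eq=upto-acyc}.
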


\begin{remark}
Since the global dimension of $\k[\bfP]$ is finite, the fully faithful embedding $\Kb(\prj \k[\bfP])$ $\to \Db(\vect^\bfP)$ becomes an equivalence
with a quasi-inverse $P\down \colon \calD(\vect^\bfP) \to \Kb(\prj \k[\bfP])$,
through which $\distCM$ defines an extended metric on the derived category $\Db(\vect^\bfP)$ by setting $\distCM(M, N):= \distCM(P\down(M\down), P\down(N\down))$
for all $M\down, N\down \in \Db(\vect^\bfP)$.
\end{remark}

\begin{remark}
\label{rmk:charact-matching}
Let $M\down, N\down \in \Cb(\prj \k[\bfP])$, and let
$(E\down, F\down) \in \Pad(M\down, N\down)$.
To understand when $\Match(E\down, F\down) \ne \varnothing$, it is useful
to record only the coefficient matrices of the differentials.

For simplicity, we identify $\prj \k[\bfP]$ with its full subcategory
$\add \k[\bfP]$ consisting of all finite direct sums of the representables
$\k[\bfP]_x$ with $x \in \bfP$, as above.
For any object
\[
P=\Ds_{i=1}^n \k[\bfP]_{x_i} \in \prj \k[\bfP]
\qquad (n\in\bbN,\ (x_1,\dots,x_n)\in \bfP^n),
\]
we set
\[
\Mat(P):=\Ds_{i=1}^n \k x_i \in \vect.
\]
For any morphism $\al \colon P \to Q$ in $\prj \k[\bfP]$, we let
$\Mat(\al)$ be its coefficient matrix as in Lemma~\ref{lem:mor-betw-proj}.
In this way we obtain a functor
\[
\Mat \colon \prj \k[\bfP] \to \vect.
\]

For each complex
$M\down=(M_i,\partial^M_i)_{i\in\bbZ} \in \Cb(\prj \k[\bfP])$,
we set
\[
\Mat(M\down):=(\Mat(M_i),\Mat(\partial^M_i))_{i\in\bbZ} \in \Cb(\vect).
\]
If $B=(B_i)_{i\in\bbZ}\in \pMatch(E\down,F\down)$,
then each bijection
$B_i \colon \smd(E_i)\to \smd(F_i)$ induces a linear isomorphism
\[
\Mat_{B_i}\colon \Mat(E_i)\to \Mat(F_i),
\]
and by Lemma~\ref{lem:matching-comm-cpx}, 
\[
\Mat_B:=(\Mat_{B_i})_{i\in\bbZ}\colon \Mat(E\down)\to \Mat(F\down)
\]
turns out to be a morphism of complexes of vector spaces
if and only if $B \in \Match(E\down, F\down)$.

Namely, Lemma~\ref{lem:matching-comm-cpx} can be rephrased as the equivalence
of the following statements:
\begin{enumerate}[label=(\arabic*)]
\item
$\Match(E\down, F\down) \ne \varnothing$.
\item
There exists a $B \in \pMatch(E\down, F\down)$ such that
$\Mat_B \colon \Mat(E\down) \to \Mat(F\down)$
is an isomorphism in $\Cb(\vect)$.
\end{enumerate}

In particular, the nonemptiness of $\Match(E\down,F\down)$ is controlled
by the coefficient matrices of the differentials, rather than by homology
alone. Note also that we do not require
$x^{(i)} \ge B_i(x^{(i)})$ for each $i \in \bbZ$ and
$x^{(i)} \in \smd(E_i)$.
Namely, $\hat{B}_i$ in \eqref{eq:hat-B} are not always assumed to be morphisms in $\prj \k[\bfP]$.
\end{remark}

\begin{remark}
By the characterization of matchings in Remark~\ref{rmk:charact-matching},
the idea of the distance $\hat{d}^\infty_\calI(M,N)$ for
$M,N\in\vect^{\bbR^n}$ introduced by Bjerkevik--Lesnick
\cite[Section~3.1]{bjerkevik2021ell} is very close to that of the
complex matching distance. If one adapts their construction to a finite
poset $\bfP$, then for $M,N\in\vect^\bfP$ one obtains
\[
\hat{d}^\infty_\calI(M,N)=\distCM(P(M),P(N)),
\]
where $P(M)$ and $P(N)$ denote minimal projective presentations of
$M$ and $N$, regarded as complexes concentrated in degrees $0$ and $1$.

The difference lies mainly in how the matching is represented.
Given $(E\down,F\down)\in\Pad(P(M),P(N))$, and assuming that \eqref{eq:matching-eq-cpx} is commutative for $i = 0$,
Bjerkevik--Lesnick absorb the matching into the choice of orderings of $\smd(E_i)$ and $\smd(F_i)$, making the corresponding bijections $B_0, B_1$ the identity permutations in the sense of Notation \ref{ntn:permutation}.
In contrast, in this paper we keep the summand sets $\smd(E_i)$ and $\smd(F_i)$ fixed and record the matching explicitly via the bijections $B_0, B_1$.
\end{remark}

\subsection{Complex matching distance of minimal projective resolutions}

In this subsection, we work over the full subcategory $\Cnn(\prj \k[\bfP])$ of $\Cb(\prj \k[\bfP])$ consisting of \emph{non-negative complexes}, that is, objects $C\down$ in $\Cb(\prj \k[\bfP])$ such that $C_i = 0$ for all $i < 0$, which are sometimes written $C\down = (C_i, \partial^C_i)_{i \ge 0}$.
For each $M \in \vect^\bfP$, any projective resolution of $M$ is regarded as an object in $\Cnn(\prj \k[\bfP])$.  We denote by $P^M\down$ a minimal projective resolution of $M$,
which is unique up to isomorphism of complexes.

A contractible cone $\Cone(\id_E)[a]$ with $E \in \prj \k[\bfP]$ and $a \in \bbZ$ is said to be \emph{non-negative} if $a \ge 0$.
Let $C\down \in \Cnn(\prj \k[\bfP])$.
When we work over $\Cnn(\prj \k[\bfP])$, \emph{padding of $C\down$ by contractible cones} means padding of $C\down$ by non-negative contractive cones.
Parallel to Notation \ref{ntn:Pad}, we use the following.

\begin{ntn}
\label{ntn:Padnn}
Let $M\down, N\down \in \Cnn(\prj \k[\bfP])$.
By $\Padnn(M\down)$, we denote the set of all paddings of $M\down$ by non-negative contratible cones, and set
$$
\Padnn(M\down, N\down):= \{(E\down, F\down) \in \Padnn(M\down) \times \Padnn(N\down) \mid |E\down| = |F\down|\}.
$$
\end{ntn}

Since $\k[\bfP]$ has a finite global dimension, 
we have the following.

\begin{lemma}
\label{lem:all-prj-resol}
Let $M \in \vect^\bfP$.
Then the set of all projective resolutions of $M$ coincides with
$\Padnn(P^M\down)$.
\end{lemma}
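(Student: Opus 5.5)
The statement has two inclusions, and I will prove each separately. The inclusion $\Padnn(P^M\down)\subseteq\{\text{projective resolutions of }M\}$ is the easy one. Take $E\down\cong P^M\down\ds A\down$ in $\Cnn(\prj\k[\bfP])$, where $A\down$ is a finite direct sum of non-negative contractible cones $\Cone(\id_E)[a]$ with $a\ge 0$. Such a cone is acyclic and vanishes in negative degrees. Hence $E\down$ is a non-negative complex of finitely generated projectives, and
\[
H_i(E\down)\cong H_i(P^M\down)\ds H_i(A\down)=H_i(P^M\down).
\]
This is $M$ for $i=0$ and $0$ for $i>0$. The augmentation of $P^M\down$, extended by zero on $A_0$, gives $H_0(E\down)\cong M$. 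So $E\down$ is a projective resolution of $M$.

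For the reverse inclusion, let $Q\down$ be any projective resolution of $M$ in $\Cnn(\prj\k[\bfP])$. Boundedness comes from the finite global dimension of $\k[\bfP]$ together with the standing convention that the complex lies in $\Cb$. I will follow the standard argument that every projective resolution is the minimal one plus trivial complexes, in four steps.

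\textbf{Step 1.} By the comparison theorem there are chain maps $\varphi\colon P^M\down\to Q\down$ and $\psi\colon Q\down\to P^M\down$ over $\id_M$. Then $\psi\varphi$ is homotopic to $\id_{P^M\down}$.

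\textbf{Step 2.} Minimality of $P^M\down$ means $\partial(P^M\down)\subseteq \operatorname{rad}P^M\down$. This forces any chain map homotopic to the identity to be of the form $\id+(\text{radical})$ in each degree. Since $\vect^\bfP$ is Krull--Schmidt with local endomorphism rings of the $\k[\bfP]_x$, each $\psi_i\varphi_i$ is then an automorphism. Replacing $\psi$ by $(\psi\varphi)^{-1}\psi$, I get $\psi\varphi=\id$. Thus $P^M\down$ is a direct summand of $Q\down$ in $\Cb(\prj\k[\bfP])$, say $Q\down\cong P^M\down\ds A\down$.

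\textbf{Step 3.} The complement $A\down$ is non-negative. It is acyclic, because $H_0(\varphi)$ is an isomorphism and the higher homology of both complexes vanishes.

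\textbf{Step 4.} By Proposition~\ref{prp:htp-eq=upto-acyc}(2), $A\down$ is a finite direct sum of contractible cones. The inductive splitting in that proof starts at the lowest nonzero degree $m\ge 0$, so every cone produced is $\Cone(\id_E)[a]$ with $a\ge m\ge 0$, i.e. it is non-negative. Hence $Q\down\in\Padnn(P^M\down)$.

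The main obstacle is Step 2, showing that $\psi\varphi$ is an isomorphism rather than merely a homotopy equivalence. The point to verify is this: if $h$ is a homotopy with $\id-\psi\varphi=\partial h+h\partial$, then each component lies in the radical because $\partial$ does, by minimality. An endomorphism of a finitely generated projective of the form $\id-r$ with $r$ in the radical is invertible, using the finite-dimensional algebra structure of $\operatorname{End}(P^M_i)$. With this in hand, the remaining steps are bookkeeping.
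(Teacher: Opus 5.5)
Your proof is correct, and since the paper gives no argument beyond the remark that $\k[\bfP]$ has finite global dimension, you are supplying the standard argument it leaves implicit. The steps are:
\begin{itemize}
\item the comparison maps $\varphi,\psi$;
\item minimality ($\operatorname{im}\partial\subseteq\operatorname{rad}$) forcing $\id-\psi\varphi=\partial h+h\partial$ to be degreewise radical, so that $\psi\varphi$ is a chain automorphism and $P^M\down$ splits off $Q\down$ in $\Cb(\prj\k[\bfP])$;
\item Proposition~\ref{prp:htp-eq=upto-acyc}(2) applied to the acyclic complement.
\end{itemize}
Your check that the inductive splitting of a non-negative acyclic complex only produces cones $\Cone(\id_E)[a]$ with $a\ge 0$ is necessary and is not stated in the paper. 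It is what lands you in $\Padnn$ rather than $\Pad$.

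Your sentence on boundedness needs correcting. Finite global dimension does not bound an arbitrary resolution: for instance $P^M\down\ds\bigoplus_{a\ge 0}\Cone(\id_E)[a]$ is an unbounded projective resolution of $M$. Boundedness of $Q\down$ comes only from the convention that resolutions are taken in $\Cnn(\prj\k[\bfP])$.

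What finite global dimension actually provides is that $P^M\down$ is bounded. This is what puts $P^M\down$ in $\Cnn(\prj\k[\bfP])$ and makes $\Padnn(P^M\down)$ defined at all. Within your argument, once a bounded $Q\down$ is given, boundedness of $P^M\down$ also follows from its being a summand of $Q\down$.
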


\begin{remark}
\label{rmk:mobius-betti}
For $b\in\bfP$, let $1_b$ denote the interval $\bfP$-module
$V_{\{b\}}$ of the singleton
$\{b\}$ (a spread in the sense of~\cite{ElchesenPatel}), and let $M\in\vect^\bfP$.

In the language of~\cite{ElchesenPatel}, the M\"obius cohomology of $M$ at $b$
is computed by the Ext–groups
\[
\Ext^d_{\vect^\bfP}(1_b,M),
\]
and the M\"obius homology groups $H^\downarrow_d M(b)$ of~\cite{patel2023mobius_homology}
are, for $\vect$, canonically dual:
\[
H^\downarrow_d M(b)\ \cong\ \Ext^d_{\vect^\bfP}(1_b,M)^\vee,
\]
where $(\blank)^\vee$ denotes $\k$-linear dual.
In particular, if $P\down^M\to M$ is a minimal projective resolution, then the
Ext-groups (and hence the M\"obius homology at $b$) are functorially determined
by $P\down^M$ via the standard computation
\[
\Ext^d_{\vect^\bfP}(1_b,M)
\ \cong\
H^d\!\bigl(\Hom_{\vect^\bfP}(P\down^{1_b},P\down^M)\bigr),
\]
for any projective resolution $P\down^{1_b}\to 1_b$.
Thus, up to duality, the M\"obius homology of $M$ is encoded in its minimal
projective resolution.

We will not use this identification in this paper, but it provides a conceptual
bridge between the M\"obius homology of~\cite{patel2023mobius_homology} and the
projective–resolution viewpoint developed in this paper; full details in the
(cohomological) setting can be found in~\cite{ElchesenPatel}.
For a detailed discussion connecting M\"obius homology to minimal projective resolutions, see~\cite[Appendix A]{bauer2026algebraic} 
\end{remark}

We will replace $\Pad$ by $\Padnn$, and see that all statements on $\Cb(\prj \k[\bfP])$ and $\Kb(\prj \k[\bfP])$ remain valid on $\Cnn(\prj \k[\bfP])$ and $\Knn(\prj \k[\bfP])$, respectively.
In this way, we will restrict the metric $\distCM$ on $\Kb(\prj \k[\bfP])$ to $\Knn(\prj \k[\bfP])$ and to its subset $\{P^M\down \mid M \in \vect^\bfP\}$.

\begin{proposition}
\label{prop:common-padding}
Let $M\down, N\down \in \Cnn(\prj \k[\bfP])$.
Then they have the same alternating sum
\[
\sum_{i\ge 0}(-1)^i|M_i|\ =\ \sum_{i\ge 0}(-1)^i|N_i|
\]
if and only if $\Padnn(M\down, N\down)\neq \varnothing$.
\end{proposition}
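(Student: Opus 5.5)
The plan is to reduce both directions to arithmetic on size vectors, since a padding only changes the size vector. First compute the effect of one non-negative contractible cone. For $\Cone(\id_E)[a]$ with $E\in\prj\k[\bfP]$, $|E|=e$ and $a\ge 0$, the size vector has $e$ in degrees $a$ and $a+1$ and $0$ elsewhere. Its alternating sum is therefore $(-1)^a e+(-1)^{a+1}e=0$. Because size vectors and alternating sums are additive under direct sums and invariant under isomorphism in $\Cnn(\prj\k[\bfP])$, every $E\down\in\Padnn(M\down)$ has the same alternating sum as $M\down$.

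For ($\Leftarrow$), suppose $(E\down,F\down)\in\Padnn(M\down,N\down)$. Then $|E\down|=|F\down|$, so the two paddings have equal alternating sums. By the observation above, these equal the alternating sums of $M\down$ and $N\down$ respectively, so $M\down$ and $N\down$ have the same alternating sum.

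For ($\Rightarrow$), assume the alternating sums agree. Put $\delta_i:=|M_i|-|N_i|\in\bbZ$ for $i\ge0$. This vanishes for $i>n$ with $n$ large, since both complexes are bounded, and $\sum_i(-1)^i\delta_i=0$. The goal is to find finite sums $A\down,B\down$ of non-negative cones with $|M\down|+|A\down|=|N\down|+|B\down|$. Write $c_a$ for the multiplicity of $\Cone(\id_{\k[\bfP]_x})[a]$ (any fixed $x\in\bfP$) that we put into $A\down$ when $c_a>0$, or into $B\down$ with multiplicity $-c_a$ when $c_a<0$. Then the degree-$i$ discrepancy to be cancelled is $\delta_i+c_i+c_{i-1}=0$, with $c_{-1}:=0$. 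Solve it recursively by
\[
c_i:=-\delta_i-c_{i-1}=-\sum_{j=0}^{i}(-1)^{i-j}\delta_j \qquad (i\ge 0).
\]
For $i\ge n$ we get $c_i=\pm\sum_{j\ge0}(-1)^j\delta_j=0$, so only finitely many cones are used and all shifts satisfy $a\ge0$. Set $E\down:=M\down\ds A\down$ and $F\down:=N\down\ds B\down$. Then $|E\down|=|F\down|$, so $(E\down,F\down)\in\Padnn(M\down,N\down)$.

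The only point needing care is the bookkeeping of the sign convention, i.e.\ splitting the integer $c_a$ into a cone on the $M$ side or on the $N$ side. The vanishing of $c_i$ for large $i$ is exactly where the hypothesis on alternating sums enters, and it is also why non-negative shifts suffice in degree $0$: that is the reason for starting the recursion with $c_{-1}=0$.
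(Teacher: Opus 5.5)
Your proof is correct and follows essentially the same route as the paper. Both arguments reduce to size-vector arithmetic: the paper writes $|M\down|-|N\down|=\sum_i k_i e^{(i)}$ using the (omitted) fact that the kernel of the alternating-sum map is generated by the cone vectors $e^{(i)}=e_i+e_{i+1}$, then pads according to the sign of $k_i$ exactly as you do with $c_i$ (with $k_i=-c_i$). Your recursion $c_i=-\delta_i-c_{i-1}$ simply unrolls the bottom-up cancellation in the width induction of Lemma~\ref{lem:kernel-hat-alpha}, and your ($\Leftarrow$) direction spells out what the paper leaves as ``the argument above can be reversed.''
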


\begin{proof}
Let $p=|M\down|$, $q=|N\down|$, $e_i:= (\de_{j,i})_{j \in \bbN} \in \bbZ^{(\bbN)}$, and $e^{(i)}:= e_i + e_{i+1}$ for all $i \in \bbN$.
Then $|\Cone(\id_{\k[\bfP]_x})[i]| = e^{(i)}$ for all $x \in \bfP$ and $i \in \bbN$.
We set $\al \colon \bbZ^{(\bbN)} \to \bbZ$ to be
the alternating-sum map: $\alpha(r)=\sum_i(-1)^ir_i$\ ($r \in \bbZ^{(\bbN)}$).
Then as is easily seen, the kernel of $\al$ is generated by the set
$\{e^{(i)} \mid i \in \bbN\}$ as an abelian group:
$\Ker \al = \ang{e^{(i)} \mid i \in \bbN}$ (The proof is similar to and easier than
that of Lemma \ref{lem:kernel-hat-alpha}, and we omit it).

($\Rightarrow$).
Assume $\alpha(p)=\alpha(q)$.
Then we have $p-q \in \Ker \al = \ang{e^{(i)} \mid i \in \bbN}$, and hence
$$
p-q = \sum_{i\in \bbN}k_i e^{(i)} =\sum_{i \in S_+}k_ie^{(i)} - \sum_{i \in S_-}|k_i|e^{(i)},\quad\text{and thus}\quad
p+\sum_{i \in S_-}|k_i|e^{(i)} = q+ \sum_{i \in S_+}k_ie^{(i)}
$$
for some $(k_i)_{i\in \bbN} \in \bbZ^{(\bbN)}$, where $S_+:= \{i \in \bbN \mid k_i \ge 0\},\, S_-:= \{i \in \bbN \mid k_i < 0\}$.
Take any $x \in \bfP$, and set $E:= \k[\bfP]_x$.
Then the last equality above shows that
$$
\left|M\down \ds \left(\Ds_{i \in S_-}\Cone(\id_E)[i]\right)^{(k_i)}\right|
= \left|N\down \ds \left(\Ds_{i \in S_+}\Cone(\id_E)[i]\right)^{(k_i)}\right|.
$$

($\Leftarrow$).
The argument above can be reversed.
\end{proof}

Definition \ref{dfn:matching-cpx} restricts to the following.

\begin{definition}
\label{dfn:matching}
Let $M, N \in \vect^\bfP$,
$(E\down,F\down)\in\Padnn(P\down^M,P\down^N)$, and set
$$
\Mat(\partial^E_i) = {}^t[a_{x^{(i+1)}, x^{(i)}}^{(i)}]_{(x^{(i+1)}, x^{(i)})\, \in \smd(E_{i+1}) \times \smd(E_{i})},
\quad
\Mat(\partial^F_i) = {}^t[b_{y^{(i+1)}, y^{(i)}}^{(i)}]_{(y^{(i+1)}, y^{()})\, \in \smd(F_{i+1}) \times \smd(F_{i})}
$$
for all $i \ge 0$.
Then a \emph{pre-matching} of $(E\down, F\down)$ is a family of degreewise bijections
\[
B=(B_i)_{i\ge 0},\quad B_i\in\Bij(\smd(E_i),\,\smd(F_i)).
\]
A \emph{matching} of $(E\down, F\down)$ is a pre-matching of $(E\down, F\down)$
satisfying the \emph{compatibility condition on differentials}:
$$
a^{(i)}_{x^{(i+1)},x^{(i)}} = b^{(i)}_{B_{i+1}(x^{(i+1)}), B_i(x^{(i)})}
$$
for all $i \ge 0$, $x^{(i)} \in \smd(E_i), x^{(i+1)} \in \smd(E_{i+1})$.
Note that this condition implies that if $a^{(i)}_{x^{(i+1)},x^{(i)}} \ne 0$, then $B_{i+1}(x^{(i+1)}) \ge B_i(x^{(i)})$.

By $\Match(E\down, F\down)$ (resp.\ $\pMatch(E\down, F\down)$),
we denote the set of matchings (resp.\ pre-matchings) of $(E\down, F\down)$.

Define the \emph{cost} of a pre-matching $B$ as follows:
\[
\cost(B)\ :=\
\sup\bigl\{\,\dist\bigl(U,\ B_i(U)\bigr)\ \bigm|\ i\ge 0,\ U \in \Summands(E_i)\,\bigr\} = \sup\{d_\bfP(x, B_i(x)) \mid i \ge 0, x \in \smd(E_i)\}.
\]
(Equivalently, $\cost(B)=\sup\{\dist(B_i^{-1}(V),V)\mid i\ge 0,\ V\in\Summands(F_i)\}$.)  
We refer to the quantity
\[
\distM(E\down,F\down)\ :=\ \inf_{\,B\in\Match(E\down,F\down)}\ \cost(B)
\]
as the \emph{matching distance} for resolutions with a fixed size vector.
It is defined to be infinity if $\Match(E\down, F\down) = \varnothing$.
\end{definition}

The same argument as in the proof of Lemma \ref{lem:reg-triangle-cpx}
shows the following.

\begin{lemma}\label{lem:reg-triangle}
Let $M, N, O \in \vect^\bfP$, and 
$(E\down, F\down) \in \Padnn(P^M\down, P^N\down)$,
$(F\down, G\down) \in \Padnn(P^N\down, P^O\down)$.
Then
\[
\distM(E\down,G\down)\ \le\ \distM(E\down,F\down)\ +\ \distM(F\down,G\down).
\]
\end{lemma}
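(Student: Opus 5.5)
The plan is to copy the proof of Lemma~\ref{lem:reg-triangle-cpx} verbatim, noting that nothing there used that the complexes were arbitrary bounded complexes rather than non-negative ones. If $\Match(E\down,F\down)$ or $\Match(F\down,G\down)$ is empty, the right-hand side is $\infty$ and there is nothing to prove. Otherwise we fix arbitrary matchings $B\in\Match(E\down,F\down)$ and $C\in\Match(F\down,G\down)$. Here $E\down,F\down,G\down$ are all non-negative complexes with $|E\down|=|F\down|=|G\down|$. We then define $D=(D_i)_{i\ge 0}$ by $D_i:=C_i\circ B_i\in\Bij(\smd(E_i),\smd(G_i))$, so $D\in\pMatch(E\down,G\down)$.

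Next we check that $D$ satisfies the compatibility condition on differentials of Definition~\ref{dfn:matching}. Write $\Mat(\partial^G_i)={}^t[c^{(i)}_{z^{(i+1)},z^{(i)}}]$. For all $i\ge 0$ and all $x^{(i)}\in\smd(E_i)$, $x^{(i+1)}\in\smd(E_{i+1})$, the two compatibility conditions chain together:
\[
a^{(i)}_{x^{(i+1)},x^{(i)}}=b^{(i)}_{B_{i+1}(x^{(i+1)}),B_i(x^{(i)})}=c^{(i)}_{D_{i+1}(x^{(i+1)}),D_i(x^{(i)})}.
\]
Equivalently, by Lemma~\ref{lem:matching-comm-cpx}, composing the commutative squares \eqref{eq:matching-eq-cpx} for $B$ and $C$ gives the square for $D$, since $\Mat_{C_i}\Mat_{B_i}=\Mat_{C_iB_i}$. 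Hence $D\in\Match(E\down,G\down)$.

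For the cost, apply the triangle inequality for $d_\bfP$ summandwise: $d_\bfP(x,D_i(x))\le d_\bfP(x,B_i(x))+d_\bfP(B_i(x),C_i(B_i(x)))$. Taking suprema gives $\cost(D)\le\cost(B)+\cost(C)$. Then $\distM(E\down,G\down)\le\cost(B)+\cost(C)$, and taking the infimum over $B$ and $C$ yields the claim.

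There is no real obstacle in this argument. The only point needing care is that the second pair of arguments uses the \emph{same} $F\down$; this is what allows the bijections to be composed directly, with no re-padding as in the proof of Theorem~\ref{thm:bneck-pseudometric-cpx}.
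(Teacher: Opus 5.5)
Your proposal is correct and matches the paper, which proves this lemma by noting that the argument of Lemma~\ref{lem:reg-triangle-cpx} carries over verbatim. That argument composes the matchings degreewise as $D_i=C_i\circ B_i$, chains the two compatibility identities on coefficients, and applies the triangle inequality for $d_\bfP$ summandwise; your remark that the shared middle complex $F\down$ is what permits direct composition without re-padding is also accurate.
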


Definition \ref{dfn:CM-dist-cpx} restricts to the following.

\begin{definition}
For minimal projective resolutions $P\down^M$ and $P\down^N$ (not necessarily of the same size), the \emph{complex matching distance} is
\[
\distCM\bigl(P\down^M,P\down^N\bigr)
\ :=\ 
\inf_{\ (E\down,F\down)\in\Padnn(P\down^M,P\down^N)}\ \distM(E\down,F\down).
\]
We adopt the extended-value convention that $\distCM\bigl(P\down^M,P\down^N\bigr)=\infty$ if there is no compatible padding (i.e.\ $\Padnn(P\down^M,P\down^N)=\varnothing$).
\end{definition}

\begin{remark}
Let $M, N \in \vect^\bfP$, and $(E\down, F\down) \in \Padnn(P^M\down, P^N\down)$.
Then by using $\pMatch(E\down, F\down)$ instead of $\Match(E\down, F\down)$,
it is possible to define a distance $\distM'(E\down, F\down)$ between
$E\down$ and $F\down$, and a distance $\distCM'(P^M\down, P^N\down)$ between
$P^M\down$ and $P^N\down$, and we can prove the parallel statements for these
distances.
However, these distances are very coarse as Example \ref{exm:Luis} below shows.
Note that $\distCM'(P^M\down, P^N\down) = 0$ if and only if there exist
a pair $(E\down, F\down) \in \Padnn(P^M\down, P^N\down)$ with
the property that $\cost(B) = 0$ for some $B \in \pMatch(E\down, F\down)$.
\end{remark}

Proposition~\ref{prop:common-padding} gives a sufficient condition for finiteness (equality of alternating sums) of $\distCM'$; in general,
$\distCM'$ may be infinite.
However, as will be shown in Corollary \ref{cor:Res-nonempty},
$\Padnn(P\down^M,P\down^N)$ has some pair $(E\down, F\down)$
such that $\Match(E\down, F\down) \ne \varnothing$ (and hence $\distCM(P\down^M,P\down^N) < \infty$)
whenever there exists a Galois coupling of $(M, N)$.

\begin{example}
\label{exm:noniso-distB-zero}
There exists a finite poset $\bfP$ and $\bfP$-modules $M$ and $N$ such that
$\distCM'(P\down^M,P\down^N) = 0$,
but $P^M\down \not\cong P^N\down$.
For example, let $\bfP:= \{1 < 2\}$, $M:= V_{\{1\}} \ds V_{\{2\}}$, and
$N:= V_\bfP = \k[\bfP]_1$.
Then $P^M\down$ and $F\down:= P^N\down \ds \Cone(\id_{\k[\bfP]_2})$
(as nonnegative complexes) are given as follows:
$$
\begin{aligned}
P^M\down &= (\dots \to 0 \to \k[P]_2 \xrightarrow{\bsmat{\ro(2\ge 1)\\0}} \k[\bfP]_1 \ds \k[\bfP]_2), \text{and}\\
F\down&= (\dots \to 0 \to \k[P]_2 \xrightarrow{\bsmat{0\\\id}} \k[\bfP]_1 \ds \k[\bfP]_2).
\end{aligned}
$$
Hence $(P^M\down,\, F\down) \in \Padnn(P^M\down, P^N\down)$.
Define $B \in \pMatch(P^M\down,\, F\down)$ by the following table:
$$
\begin{array}{c|c|c}
\deg & 1 & 0 \\
\hline
P^M\down & \kP{2} & \kP{1}\ \ \kP{2}\\
F\down   & \kP{2} & \kP{1}\ \ \kP{2}\\
\hline
\dist & 0 & 0 \ \ \ \ \ \ \ \ 0
\end{array}.
$$
Then we have  $\cost(B) = 0$, and hence $\distCM'(P^M\down, P^N\down) = 0$.
However, it is clear that $P^M\down \not\cong P^N\down = (\dots\to 0 \to  \k[\bfP]_1)$.
Note that $B$ above does not satisfy the compatibility on differentials.
\end{example}

We next give a criterion when $\distCM'(P^M\down, P^N\down) = 0$,
which gives a necessary condition
for $\distCM(P^M\down, P^N\down) = 0$.
We denote by $K_{\prj}(\bfP)$ the (split) Grothendieck group of the
category of projective $\bfP$-modules.
For each projective $\bfP$-module $P$, we denote by $[P]$ the element
of $K_{\prj}(\bfP)$ containing the isomorphism class of $P$.
Then the set $\{[\k[\bfP]_x] \mid x \in \bfP\}$ forms a basis of
$K_{\prj}(\bfP)$.
For each nonnegative bounded complex $E\down = (E_i, \partial_i)_{i \in \bbN}$
of projective $\bfP$-modules, we set
$[E\down]:= ([E_i])_{i\in \bbN} \in K_{\prj}(\bfP)^{(\bbN)}$.
For any $A, B \in K_{\prj}(\bfP)^{(\bbN)}$, we write
$A \equiv B$ if $A - B \in 
\ang{[\Cone(\id_{\k[\bfP]_x})[i]] \mid x \in \bfP,\, i \in \bbN}$.

\begin{lemma}
\label{lem:kernel-hat-alpha}
Define a group homomorphism $\hat{\al} \colon K_{\prj}(\bfP)^{(\bbN)} \to K_{\prj}(\bfP)$ by $\hat{\al}((A_i)_{i \in \bbN}):= \sum_{i\in \bbN}(-1)^iA_i$
for all $(A_i)_{i \in \bbN} \in K_{\prj}(\bfP)^{(\bbN)}$.
Then $\Ker \hat{\al} = \ang{[\Cone(\id_{\k[\bfP]_x})[i]] \mid x \in \bfP,\, i \in \bbN}$.
Therefore, $A \equiv B$ if and only if $\hat{\al}(A) = \hat{\al}(B)$.
\end{lemma}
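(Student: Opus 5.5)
We have to show two inclusions: $\Ker\hat{\al}$ contains the subgroup $L:=\ang{[\Cone(\id_{\k[\bfP]_x})[i]] \mid x\in\bfP,\ i\in\bbN}$, and conversely. The first is immediate. Indeed, $[\Cone(\id_{\k[\bfP]_x})[i]]$ has the entry $[\k[\bfP]_x]$ in positions $i$ and $i+1$ and $0$ elsewhere. Hence
\[
\hat{\al}\bigl([\Cone(\id_{\k[\bfP]_x})[i]]\bigr)=(-1)^i[\k[\bfP]_x]+(-1)^{i+1}[\k[\bfP]_x]=0 .
\]
Since $\hat{\al}$ is a group homomorphism, $L\subseteq\Ker\hat{\al}$.

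For the reverse inclusion I would reduce coordinatewise using the basis. Because $\{[\k[\bfP]_x]\mid x\in\bfP\}$ is a basis of $K_{\prj}(\bfP)$, we have $K_{\prj}(\bfP)\cong\bbZ^{\bfP}$. Thus $K_{\prj}(\bfP)^{(\bbN)}\cong\bigoplus_{x\in\bfP}\bbZ^{(\bbN)}$, and under this identification $\hat{\al}$ is the direct sum over $x$ of the scalar alternating-sum map $\al\colon\bbZ^{(\bbN)}\to\bbZ$ from the proof of Proposition~\ref{prop:common-padding}. Likewise $L$ decomposes as $\bigoplus_x\ang{e^{(i)}\mid i\in\bbN}$. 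It therefore suffices to show $\Ker\al=\ang{e^{(i)}\mid i\in\bbN}$ in $\bbZ^{(\bbN)}$. This is also the fact quoted without proof in Proposition~\ref{prop:common-padding}, so the argument covers both.

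To prove the scalar claim, take $r=(r_i)\in\Ker\al$ with $r_i=0$ for $i>n$, and induct on $n$. If $n=0$, then $r_0=\al(r)=0$, so $r=0$. If $n\ge1$, set $r':=r-r_ne^{(n-1)}$. Then $\al(r')=\al(r)-r_n\al(e^{(n-1)})=0$, and $r'$ is supported in $\{0,\dots,n-1\}$. By induction $r'\in\ang{e^{(i)}}$, and hence so is $r$. Equivalently, one can write explicitly $r=\sum_{j\ge0}c_je^{(j)}$ with $c_j=\sum_{k>j}(-1)^{k-j-1}r_k$; I would just use the induction.

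The final sentence of the lemma follows at once. By definition, $A\equiv B$ means $A-B\in L=\Ker\hat{\al}$, which holds iff $\hat{\al}(A)=\hat{\al}(B)$. The only step needing any care is the reduction to coordinates, namely checking that $\hat{\al}$ and $L$ both split compatibly along the basis $\{[\k[\bfP]_x]\}$. This is bookkeeping, since each generator of $L$ involves a single $x$.
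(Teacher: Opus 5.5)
Your proof is correct and is essentially the paper's argument. The paper also gets $\supseteq$ by direct computation, and gets $\subseteq$ by an induction (on the width of the support) that subtracts $\sum_{x} a_x[\Cone(\id_{\k[\bfP]_x})[m]]$ to kill the lowest nonzero degree $m$; you instead split along the basis $\{[\k[\bfP]_x]\}$ first and peel off the top degree in the scalar case, which has the small bonus of also proving the identity $\Ker\al=\ang{e^{(i)}\mid i\in\bbN}$ that Proposition~\ref{prop:common-padding} quotes without proof.
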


\begin{proof}
$(\supseteq)$.
Let $x \in \bfP$ and $i \in \bbN$.
We set $e_{i,x}:= [\Cone(\id_{\k[\bfP]_x})[i]]$ for short.
Then
$$
\hat{\al}(e_{i,x}) = \hat{\al}(\dots, 0, \overset{\smat{(\deg\,i+1)\\\ }}{[\k[\bfP]_x]}, \overset{\smat{(\deg\,i)\\\ }}{[\k[\bfP]_x]}, 0, \dots, 0)
= (-1)^{i+1}[\k[\bfP]_x] + (-1)^{i}[\k[\bfP]_x] = 0.
$$
Hence $\Ker \hat{\al} \supseteq \ang{e_{i,x} \mid x \in \bfP, i \in \bbN}$.
Set the right hand side of this to be $K$.

$(\subseteq)$.
Let $A \in K_{\prj}(\bfP)^{(\bbN)}$.
We define its \emph{width} $w(A)$ by
$$
w(A):=
\begin{cases}
0 & \text{if $A_i = 0$  for all } i \in \bbN,\\
\max\supp(A) - \min\supp(A) + 1& \text{otherwise},
\end{cases}
$$
where $\supp(A):= \{i \in \bbN \mid A_i \ne 0\}$.
We show that if $A \in \Ker \hat{\al}$, then $A \in K$ by induction on $w(A)$.

When $w(A) = 0, 1$, then obviously $A = 0 \in K$.
Therefore, assume $w(A) \ge 2$.
By definition of width, there exist integers $m,n$ with $n > m \ge 0$ such that
$$
A = (\dots, 0, A_n,\dots, A_m, 0, \dots, 0)\quad\text{with } A_m, A_n \ne 0.
$$
Using the basis of $K_{\prj}(\bfP)$ given above, we can write
$$
A_m = \sum_{x \in \bfP} a_{x} [\k[\bfP]_x] = \sum_{x \in S} a_{x} [\k[\bfP]_x]
$$
for some $(a_{x})_{x\in \bfP} \in \bbZ^{(\bfP)}$, where $S:= \{x \in \bfP \mid a_{x} \ne 0\}$
is a finite set.
Define $B \in K_{\prj}(\bfP)^{(\bbN)}$ by
$$
B:= A - \sum_{x \in S}a_{x}e_{m,x} = 
(\dots, 0, A_n, \dots, A_{m+1} - \sum_{x \in S} a_{x}[\k[\bfP]_x],0,0,\dots,0).
$$
Then $w(B) \le w(A) -1$.
Moreover, we have $\hat{\al}(B) = 0$ by the linearity of $\hat{\al}$ and $(\supseteq)$ above.
Thus $B \in \Ker \hat{\al}$.
Therefore, by induction hypothesis, we have $B \in K$, and hence
$A = B + \sum_{x \in S}a_{x}e_{m,x} \in K$.
\end{proof}

\begin{lemma}
\label{lem:distB=zero}
(1) Let $E\down, F\down$ be nonnegative bounded complexes of projective $\bfP$-modules.
Then the following are equivalent:
\begin{enumerate}[label=(\alph*)]
\item 
$\distM'(E\down, F\down) = 0$,
\item 
There exists some $B \in \pMatch(E\down, F\down)$ such that $\cost(B) = 0$,
\item 
$[E\down] = [F\down]$.
\end{enumerate}

(2) Let $M, N$ be $\bfP$-modules.
Then the following are equivalent:
\begin{enumerate}[label=(\alph*)]
\item 
$\distCM'(P^M\down, P^N\down) = 0$,
\item 
$\Padnn(P^M\down, P^N\down)$ has a pair $(E\down, F\down)$
such that $\cost(B) = 0$ for some $B \in \pMatch(E\down, F\down)$,
\item 
$[P^M\down] \equiv [P^N\down]$,
\item 
$\hat{\al}([P^M\down]) = \hat{\al}([P^N\down])$.
\end{enumerate}
In particular, if $\distCM(P^M\down, P^N\down) = 0$, then
$\hat{\al}([P^M\down]) = \hat{\al}([P^N\down])$.
\end{lemma}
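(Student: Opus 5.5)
For part (1), I would prove the cycle (a)$\Rightarrow$(b)$\Rightarrow$(c)$\Rightarrow$(a). Here $E\down,F\down$ are implicitly of the same size vector, since otherwise $\pMatch(E\down,F\down)=\varnothing$ and all three conditions fail, with $[E\down]\ne[F\down]$ because already the sizes differ.

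For (a)$\Rightarrow$(b), I would note that $\pMatch(E\down,F\down)$ is a finite set: there are finitely many degreewise bijections between finite multisets, and only finitely many degrees are nonzero. So the infimum defining $\distM'$ is attained, and value $0$ gives a pre-matching $B$ with $\cost(B)=0$. For (b)$\Rightarrow$(c), $\cost(B)=0$ means $d_\bfP(x,B_i(x))=0$, hence $B_i(x)=x$ for every $x\in\smd(E_i)$. Thus $\smd(E_i)=\smd(F_i)$ as multisets, so $E_i\cong F_i$ by Lemma~\ref{lem:ind-proj}, and therefore $[E_i]=[F_i]$ for all $i$. For (c)$\Rightarrow$(b)$\Rightarrow$(a), I would run this backwards: $[E_i]=[F_i]$ in the free group with basis $\{[\k[\bfP]_x]\}$ gives equal multisets $\smd(E_i)=\smd(F_i)$. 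I then choose $B_i$ to be a bijection matching each occurrence of $x$ to an occurrence of $x$, which has cost $0$.

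For part (2), (a)$\Leftrightarrow$(b) follows by applying part (1)(a)$\Leftrightarrow$(b) pairwise, together with finiteness of the infimum over $\Padnn$. For that I would argue that the values of $\cost$ lie in the finite set $\{d_\bfP(x,y)\}$, so an infimum equal to $0$ is attained by some pair and some pre-matching.

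(b)$\Rightarrow$(c): by part (1), $[E\down]=[F\down]$. Since $E\down\cong P^M\down\ds A\down$ and $F\down\cong P^N\down\ds A'\down$ with $A\down,A'\down$ sums of non-negative contractible cones, $[A\down]$ and $[A'\down]$ lie in the subgroup generated by the $[\Cone(\id_{\k[\bfP]_x})[i]]$. Hence $[P^M\down]\equiv[P^N\down]$.

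(c)$\Rightarrow$(b): write $[P^M\down]-[P^N\down]=\sum k_{i,x}e_{i,x}$ and separate the positive and negative coefficients, as in the proof of Proposition~\ref{prop:common-padding}. This gives $[P^M\down]+\sum_{k<0}|k_{i,x}|e_{i,x}=[P^N\down]+\sum_{k>0}k_{i,x}e_{i,x}$. Padding $P^M\down$ by $|k_{i,x}|$ copies of $\Cone(\id_{\k[\bfP]_x})[i]$ for negative coefficients, and $P^N\down$ likewise for positive ones, yields $(E\down,F\down)\in\Padnn(P^M\down,P^N\down)$ with $[E\down]=[F\down]$. Part (1) then supplies the cost-$0$ pre-matching.

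(c)$\Leftrightarrow$(d) is exactly Lemma~\ref{lem:kernel-hat-alpha}.

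For the final claim, $\Match\subseteq\pMatch$ gives $\distCM'\le\distCM$, so $\distCM=0$ implies $\distCM'=0$, hence (d).

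The only mildly delicate step is the attainment of the infima in (a)$\Rightarrow$(b). This is handled by finiteness of $\bfP$, which makes the value set of the cost function finite.
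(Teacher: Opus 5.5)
Your proposal is correct and follows the same route as the paper, which treats (1) as immediate from the definitions and obtains (2) from (1) together with Lemma~\ref{lem:kernel-hat-alpha} (and Lemma~\ref{lem:all-prj-resol}). You simply supply the details the paper leaves implicit: that the infima are attained because $d_\bfP$ takes only finitely many values, and that the cone padding can be built explicitly by splitting the coefficients into positive and negative parts as in Proposition~\ref{prop:common-padding}.
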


\begin{proof}
(1) is clear by definition.

(2) follows from (1) and Lemmas \ref{lem:all-prj-resol} and \ref{lem:kernel-hat-alpha}.
\end{proof}

By the same arguments as in the proofs of Theorem \ref{thm:bneck-pseudometric-cpx} and Proposition \ref{prp:metric-on-isoclasses-cpx},
we obtain the following three statements.

\begin{theorem}
\label{thm:bneck-pseudometric}
The function $\distCM$ defines an extended pseudometric on the set of
minimal projective resolutions $\{P\down^M \mid M\in\vect^{\bfP}\}$.
\end{theorem}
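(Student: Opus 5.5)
The plan is to transcribe the proof of Theorem~\ref{thm:bneck-pseudometric-cpx} into the non-negative setting, replacing $\Pad$ by $\Padnn$ and Lemma~\ref{lem:reg-triangle-cpx} by Lemma~\ref{lem:reg-triangle}. Nonnegativity is clear, since costs are suprema of values of $d_\bfP$. For $M\in\vect^\bfP$ we have $(P^M\down,P^M\down)\in\Padnn(P^M\down,P^M\down)$ (padding by the empty sum of cones). The identity family $(\id_{\smd(P^M_i)})_{i\ge 0}$ trivially satisfies the compatibility condition on differentials and has cost $0$, so $\distCM(P^M\down,P^M\down)=0$.

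Symmetry goes as follows. If $B\in\Match(E\down,F\down)$ with $(E\down,F\down)\in\Padnn(P^M\down,P^N\down)$, then $(F\down,E\down)\in\Padnn(P^N\down,P^M\down)$. Moreover $B^{-1}=(B_i^{-1})_{i\ge0}$ is a matching: substituting $x=B^{-1}(y)$ in the compatibility identity gives $b^{(i)}_{y^{(i+1)},y^{(i)}}=a^{(i)}_{B^{-1}_{i+1}(y^{(i+1)}),B^{-1}_i(y^{(i)})}$. By symmetry of $d_\bfP$ it has the same cost, and taking infima gives symmetry.

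For the triangle inequality, take $M,N,O$ and assume the right-hand side is finite. Choose $(E\down,F\down)\in\Padnn(P^M\down,P^N\down)$ with $B\in\Match(E\down,F\down)$, and $(F'\down,G\down)\in\Padnn(P^N\down,P^O\down)$ with $B'\in\Match(F'\down,G\down)$. Up to isomorphism of complexes, write $F\down\cong P^N\down\ds C\down$ and $F'\down\cong P^N\down\ds C'\down$ with $C\down,C'\down$ finite sums of non-negative contractible cones. Transporting matchings along these isomorphisms needs a little care: I would fix the decompositions so that the coefficient matrices are block diagonal, which is harmless since the matching condition only sees coefficient matrices and the chosen summand orderings. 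Then set $E^*\down:=E\down\ds C'\down$, $H\down:=C\down\ds P^N\down\ds C'\down$ and $G^*\down:=C\down\ds G\down$. Since $C\down$ and $C'\down$ are non-negative, these lie in $\Padnn$ of $P^M\down$, $P^N\down$ and $P^O\down$, and the size vectors agree. The families $B\sqcup\id_{C'}$ and $\id_C\sqcup B'$ are matchings: the differential matrices are block diagonal, so compatibility holds blockwise. Their costs are $\cost(B)$ and $\cost(B')$, because identity blocks contribute $0$.

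Lemma~\ref{lem:reg-triangle} then gives $\distM(E^*\down,G^*\down)\le\cost(B)+\cost(B')$. Taking infima over $B,B'$ and the paddings yields the inequality. The main obstacle is the identification step: making sure that replacing $F\down$ by an isomorphic complex with block-diagonal coefficient matrices preserves the existence and cost of matchings. I would handle this by noting that $\Padnn$ is closed under isomorphism. It is then enough to choose representatives in which the differentials of $P^N\down\ds C\down$ are literally block diagonal with identity blocks on the cones, and to define $B$ relative to those representatives. Since the infimum ranges over all representatives, nothing is lost.
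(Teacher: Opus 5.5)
The route is the paper's: it proves this statement by rerunning the proof of Theorem~\ref{thm:bneck-pseudometric-cpx} with $\Padnn$ in place of $\Pad$. Your identity and symmetry steps are fine. The gap is in the triangle inequality, at exactly the identification step you flag, and your way of closing it does not work.

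The pair $(E\down,F\down)$ and the matching $B$ are supplied by the infimum defining $\distCM(P^M\down,P^N\down)$. You cannot pick a block-diagonal representative of $F\down$ and then ``define $B$ relative to it''. Matchings are not invariant under replacing one side by an isomorphic complex. An isomorphism $\phi\colon F\down\to P^N\down\ds C\down$ replaces $\Mat(\partial^F_i)$ by $\Mat(\phi_i)\Mat(\partial^F_i)\Mat(\phi_{i+1})^{-1}$, and $\Mat(\phi_i)$ is generally not a permutation matrix, so $B$ typically stops satisfying the compatibility condition. Precisely because that condition only sees coefficient matrices, the change of representative is not harmless. Nor does ``the infimum ranges over all representatives'' show that restricting to block-diagonal ones loses nothing.

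Example~\ref{ex:bneck-2d} is a concrete instance. There $F\down=P^N\down\ds C\down$ is block diagonal, and the cost-$1$ pre-matching $B$ is a matching for $(P^M\down,F'\down)$ with $F'\down\cong F\down$, but not for $(P^M\down,F\down)$. Compensating by modifying $E\down$ also fails in general. A component $c\,\ro(y\ge y')$ of $\phi_i$ would have to be pulled back to a morphism $\k[\bfP]_{B_i^{-1}(y)}\to\k[\bfP]_{B_i^{-1}(y')}$. Such a morphism exists only if $B_i^{-1}(y)\ge B_i^{-1}(y')$, and a matching gives no control over that.

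Further padding cannot rescue the argument either. Lemma~\ref{lem:reg-triangle} needs the two middle complexes to coincide, or at least to be joined by a cost-$0$ matching. Any matching only permutes rows and columns, so it preserves the number of nonzero entries of each $\Mat(\partial_i)$. Each non-negative cone adds exactly one nonzero entry, and padding two complexes of equal size vector to equal size vectors adds the same cones degreewise. Hence two such complexes whose $\Mat(\partial_i)$ have different numbers of nonzero entries can never be joined by any matching after padding, in particular not by a cost-$0$ one. This already happens for $F\down$ and $F'\down$ in Example~\ref{ex:bneck-2d}: $\Mat(\partial_1)$ has one nonzero entry for $F\down$ and two for $F'\down$.

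What is missing is a genuine composition lemma. You are given $B\in\Match(E\down,F\down)$, $B'\in\Match(F'\down,G\down)$, and only an isomorphism $F\down\ds C'\down\cong C\down\ds F'\down$ in $\Cb(\prj\k[\bfP])$. From these you must produce paddings of $P^M\down$ and $P^O\down$ admitting a matching of cost at most $\cost(B)+\cost(B')$. The paper's argument writes $F\down=C\down\ds N\down$ and $F'\down=N\down\ds C'\down$ as equalities, so it makes the same identification silently. Your proposal correctly locates the weak point but does not close it.
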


\begin{corollary}
The function $\distCM'$ defines an extended pseudometric on the set of
minimal projective resolutions $\{P\down^M \mid M\in\vect^{\bfP}\}$.
\end{corollary}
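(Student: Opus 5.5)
The plan is to repeat the proof of Theorem~\ref{thm:bneck-pseudometric-cpx}, replacing $\Pad$ by $\Padnn$, $\Match$ by $\pMatch$, and Lemma~\ref{lem:reg-triangle} by its pre-matching analogue. For $\distCM'(P\down^M,P\down^M)=0$ I take $(P\down^M,P\down^M)\in\Padnn(P\down^M,P\down^M)$ with the identity pre-matching, which has cost $0$. For symmetry, any $B\in\pMatch(E\down,F\down)$ has inverse $B^{-1}=(B_i^{-1})_{i\ge0}\in\pMatch(F\down,E\down)$. The cost is unchanged because $d_\bfP$ is symmetric, and $(F\down,E\down)\in\Padnn(P\down^N,P\down^M)$.

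For the triangle inequality I first show the analogue of Lemma~\ref{lem:reg-triangle} for $\distM'$. Take pre-matchings $B$ of $(E\down,F\down)$ and $C$ of $(F\down,G\down)$. The composite $D_i:=C_i\circ B_i$ is a pre-matching, since no compatibility has to be checked. The triangle inequality for $d_\bfP$ gives $\cost(D)\le\cost(B)+\cost(C)$.

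Next, fix $(E\down,F\down)\in\Padnn(P\down^M,P\down^N)$ with $B\in\pMatch(E\down,F\down)$, and $(F'\down,G\down)\in\Padnn(P\down^N,P\down^O)$ with $B'\in\pMatch(F'\down,G\down)$. By definition of $\Padnn$, after changing $F\down$ and $F'\down$ by isomorphisms of complexes, $F\down\cong C\down\ds P\down^N$ and $F'\down\cong P\down^N\ds C'\down$, where $C\down,C'\down$ are finite sums of non-negative contractible cones. These isomorphisms only transport pre-matchings, and the summand multisets are preserved by Krull--Schmidt (Lemma~\ref{lem:ind-proj}). Now set $E^*\down:=E\down\ds C'\down$, $H\down:=C\down\ds P\down^N\ds C'\down$, and $G^*\down:=C\down\ds G\down$.

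These satisfy $(E^*\down,H\down)\in\Padnn(P\down^M,P\down^N)$ and $(H\down,G^*\down)\in\Padnn(P\down^N,P\down^O)$, and their size vectors agree. The pre-matchings $B\sqcup\mathrm{id}$ and $\mathrm{id}\sqcup B'$ have the same costs as $B$ and $B'$, since the identity parts contribute cost $0$. Composing them as above gives a pre-matching of $(E^*\down,G^*\down)$, a pair in $\Padnn(P\down^M,P\down^O)$, with cost at most $\cost(B)+\cost(B')$. Taking infima gives the triangle inequality. If either infimum is over an empty set, the right-hand side is $\infty$ and there is nothing to prove.

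The only step needing care is the bookkeeping of padding: checking that $E\down\ds C'\down$ is still a padding of $P\down^M$ by non-negative cones, and that the identification of $\smd(F_i)\sqcup\smd(C'_i)$ with $\smd(C_i)\sqcup\smd(F'_i)$ is a legitimate bijection of multisets. Both follow from Krull--Schmidt and from closure of $\Padnn$ under adding non-negative cones. No compatibility of differentials has to be checked, so this case is simpler than Theorem~\ref{thm:bneck-pseudometric}.
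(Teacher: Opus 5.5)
Your proposal is correct and takes essentially the paper's route: the paper obtains this corollary by rerunning the proof of Theorem~\ref{thm:bneck-pseudometric-cpx} with $\Padnn$ in place of $\Pad$. That means the identity for reflexivity, inverse bijections for symmetry, and the re-padding by $C\down$ and $C'\down$ followed by degreewise composition for the triangle inequality, with the compatibility check on differentials simply dropped, exactly as you do. Your explicit remark that $F\down\cong C\down\ds P\down^N$ only up to isomorphism, which is harmless since pre-matchings depend only on the summand multisets, is slightly more careful than the paper's literal equalities.
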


\begin{proposition}
\label{prp:metric-on-isoclasses}
For any $M, N \in \vect^\bfP$, the following are equivalent:
\begin{enumerate}[label=(\arabic*)]
\item
$\distCM(P^M\down, P^N\down) = 0,$
\item 
$P^M\down \cong P^N\down$ as complexes, and
\item 
$M \cong N$ in $\vect^\bfP$.
\end{enumerate}
\end{proposition}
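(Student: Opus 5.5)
We will prove the cycle of implications (2) $\Rightarrow$ (3) $\Rightarrow$ (2) and (1) $\Leftrightarrow$ (2). The equivalence of (2) and (3) is standard. If $P^M\down \cong P^N\down$ as complexes, then taking $H_0$ gives $M \cong H_0(P^M\down) \cong H_0(P^N\down) \cong N$. Conversely, if $M \cong N$, then $P^N\down$ is a minimal projective resolution of $M$, and minimal projective resolutions are unique up to isomorphism of complexes (as recalled at the start of this subsection). So the real content is (1) $\Leftrightarrow$ (2).

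For (2) $\Rightarrow$ (1), we argue as in Proposition~\ref{prp:metric-on-isoclasses-cpx}. Let $\varphi\colon P^M\down \to P^N\down$ be an isomorphism of complexes. Fix decompositions as in \eqref{eq:proj-terms-cpx} and \eqref{eq:diffl-cpx}, and transport the decomposition of $P^N\down$ along $\varphi$. This realizes $P^N\down$ with the same summand sets and the same coefficient matrices as $P^M\down$, so the identity pre-matching lies in $\Match(P^M\down, P^N\down)$ and has cost $0$. Since $(P^M\down, P^N\down) \in \Padnn(P^M\down, P^N\down)$ (take the empty padding; the size vectors agree because the complexes are isomorphic), we get $\distCM = 0$. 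There is one subtlety here. The coefficient matrices depend on the chosen decomposition, and the definition of $\Match$ must be read with respect to some fixed choice; I take this to be the convention already used in the proof of Proposition~\ref{prp:metric-on-isoclasses-cpx}.

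For (1) $\Rightarrow$ (2), we again follow Proposition~\ref{prp:metric-on-isoclasses-cpx}. The distance values lie in the finite set $\{d_\bfP(x,y)\}$, so the infimum defining $\distCM$ is attained. Hence there is a pair $(E\down, F\down) \in \Padnn(P^M\down, P^N\down)$ and a matching $B$ with $\cost(B) = 0$, which forces $B_i(x) = x$ for every summand $x$. The maps $\hat B_i$ of \eqref{eq:hat-B} are then isomorphisms $E_i \to F_i$. By Lemma~\ref{lem:matching-comm-cpx}, they commute with the differentials, so $E\down \cong F\down$ as complexes. By Lemma~\ref{lem:all-prj-resol}, $E\down$ is a projective resolution of $M$ and $F\down$ is one of $N$. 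Taking $H_0$ gives $M \cong H_0(E\down) \cong H_0(F\down) \cong N$, which is (3) and hence (2).

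The main obstacle is justifying that the padding contributes no homology in degree $0$. Non-negative cones $\Cone(\id_E)[a]$ with $a \ge 0$ are acyclic, including in degree $0$ when $a = 0$ (there the map $E \to E$ is surjective onto the degree-$0$ term). So $H_0(E\down) = H_0(P^M\down) = M$, which is exactly what Lemma~\ref{lem:all-prj-resol} encodes. We should not conclude directly that $E\down \cong F\down$ implies $P^M\down \cong P^N\down$ by cancelling cones. Instead we route through homology and the uniqueness of minimal resolutions, which avoids any Krull--Schmidt cancellation argument in $\Cnn(\prj \k[\bfP])$.
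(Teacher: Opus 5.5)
Your proof is correct and is essentially the paper's argument; the paper just refers back to the proof of Proposition~\ref{prp:metric-on-isoclasses-cpx}, where a cost-zero matching gives $E\down\cong F\down$ via the maps $\hat{B}_i$, and conversely an isomorphism lets the same complex serve on both sides with the identity matching. Your passage through $H_0$ and uniqueness of minimal resolutions supplies the step from $E\down\cong F\down$ to (2) and (3) that the paper leaves implicit. So does your remark that the infimum is attained because $d_\bfP$ takes finitely many values.
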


By combining Theorem \ref{thm:bneck-pseudometric} and
Proposition \ref{prp:metric-on-isoclasses}, we have
the following.

\begin{theorem}
\label{thm:extended_metric_projectives}
The function $\distCM$ defines an extended metric on the set of isomorphism classes of
minimal projective resolutions $\{P\down^M \mid M\in\vect^{\bfP}\}/\!\cong$.
\end{theorem}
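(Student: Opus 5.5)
Theorem~\ref{thm:extended_metric_projectives} follows by combining the two results just stated, so I will prove it as an extended metric on the quotient set, with every axiom checked on representatives.

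First I check that $\distCM$ is well defined on isomorphism classes. Suppose $P\down^M\cong P\down^{M'}$ as complexes. Then for any complex $A\down$ we have $P\down^M\ds A\down\cong P\down^{M'}\ds A\down$, so $\Padnn(P\down^M)=\Padnn(P\down^{M'})$ as sets, since membership is defined up to isomorphism in $\Cb(\prj\k[\bfP])$. Consequently $\Padnn(P\down^M,P\down^N)=\Padnn(P\down^{M'},P\down^N)$, and the infimum defining $\distCM$ is taken over literally the same set of pairs. The same argument applies in the second variable. This gives a well-defined function
\[
\distCM\colon(\{P\down^M\mid M\in\vect^\bfP\}/\!\cong)^2\to[0,\infty].
\]

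Next I transfer the pseudometric axioms. By Theorem~\ref{thm:bneck-pseudometric}, $\distCM$ is nonnegative, vanishes on the diagonal, is symmetric, and satisfies the triangle inequality on representatives. Because each of these statements involves only values of $\distCM$, they descend unchanged to isomorphism classes.

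Finally, for definiteness I take classes $[P\down^M]$ and $[P\down^N]$ with $\distCM(P\down^M,P\down^N)=0$. By Proposition~\ref{prp:metric-on-isoclasses}, (1)$\Rightarrow$(2), we get $P\down^M\cong P\down^N$ as complexes, so the two classes coincide. Hence $\distCM$ is an extended metric.

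The only point needing care is the first step: the padding sets are defined up to isomorphism of complexes, which is exactly what makes them invariant. Everything else is direct citation, so I expect no real obstacle here. The substantive content is in Proposition~\ref{prp:metric-on-isoclasses}, and what it needs from the zero-cost matching is the following. One gets $E\down\cong F\down$ via the maps $\hat B_i$. By Lemma~\ref{lem:all-prj-resol}, $E\down$ and $F\down$ are projective resolutions of $M$ and $N$, so this isomorphism gives $M\cong N$ by taking $H_0$. Uniqueness of minimal projective resolutions then gives $P\down^M\cong P\down^N$.
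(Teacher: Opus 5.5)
Your proposal is correct and follows the paper's proof, which likewise obtains the statement by combining Theorem~\ref{thm:bneck-pseudometric} (the pseudometric axioms) with Proposition~\ref{prp:metric-on-isoclasses} ($\distCM(P\down^M,P\down^N)=0$ forces $P\down^M\cong P\down^N$). Your explicit check that $\Padnn$, and hence $\distCM$, does not change when a resolution is replaced by an isomorphic complex, and your $H_0$-based sketch of the definiteness step, are details the paper leaves implicit; they do not change the argument.
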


\subsection{Examples}

We now compute complex matching distances for our 1D and 2D running examples by
comparing minimal projective resolutions and equalizing degreewise sizes
via contractible cones.

\begin{example}\label{ex:bneck-chain}
Let $\bfP=\{1<2<3<4\}$ and consider
\[
M=V_{[1,2)}\oplus V_{[2,4)},\qquad N=V_{[2,4)}
\]
as in Example~\ref{ex:gtd-chain}.
Minimal projective resolutions (as nonnegative complexes)
of their summands are given as follows:
\[
\begin{aligned}
P\down^{V_{[1,2)}}& = (\cdots \to 0 \to \kP{2} \ya{\ro(2\ge 1)} \kP{1}),\\ P\down^{V_{[2,4)}}& = (\cdots \to 0 \to \kP4 \ya{\ro(4 \ge 2)} \kP2).
\end{aligned}
\]
Summing yields
\[
\begin{array}{c|c|c}
\deg & 1 & 0 \\
\hline
P\down^{M} & \kP2 \ds \kP4 & \kP1 \ds \kP2\\
P\down^{N} & \kP4 & \kP2
\end{array}
\]
with $|P\down^M|=(\dots,0,2,2)$ and $|P\down^N|=(\dots,0,1,1)$. Since the alternating sums agree, padding is possible.
Pad $P\down^{N}$ by $\Cone(\id_{\kP1})$
to have
$F\down:= P^N\down \ds \Cone(\id_{\kP1})$,
adding $\kP1$ in both degrees $1$ and $0$, which matches the size vector of $P\down^{M}$.
Define a matching $B \in \Match(P^M\down, F\down)$
by the following table:
\[
\begin{array}{c|cc|cc}
\deg & \multicolumn{2}{c|}1 & \multicolumn{2}{c}0 \\
\hline
P\down^{M} & \kP2 & \kP4 & \kP1 & \kP2\\
\hline
F\down & \kP1 & \kP4 & \kP1 & \kP2 \\
\hline
\dist & 1 & 0 & 0 & 0
\end{array}\ ,
\]
where since we have
$$
\Mat(\partial^{P^M}_0) = \bsmat{1&0\\0&1},\text{ and }\ 
\Mat(\partial^F_0) = \bsmat{1&0\\0&1}
$$
under the orders of summands as in the table,
$B$ satisfies the compatibility on differentials.
Thus $\distCM(P\down^{M},P\down^{N}) \le 1$.
Since $\hat{\al}(P^M\down) \ne \hat{\al}(P^N\down)$, it holds
by Lemma \ref{lem:distB=zero} (2) that
\[
\distCM(P\down^{M},P\down^{N})=1.
\]
\end{example}

\begin{example}
\label{ex:bneck-2d}
Let $\bfP=\{1,2,3\}^2$ with the product order and $L^\infty$ metric, and
consider the modules $M,N\in\vect^{\bfP}$ from Example~\ref{ex:gtd-2d}.

As is easily seen (e.g., see \citealp[Proposition 41]{MR4402576}),
$M_1,\, M_2$ and $N$ have the following
minimal projective resolutions (as nonnegative complexes):
$$
\begin{aligned}
P^{M_1}\down &= (\cdots \to 0 \to \k[\bfP]_{23} \ya{\bsmat{\ro(23\ge 22)\\-\ro(23\ge 13)}}
\k[\bfP]_{22} \ds  \k[\bfP]_{13} \ya{[\ro(22 \ge 12)\ \ro(13\ge 12)]} \k[\bfP]_{12})\\
P^{M_2}\down &= (\cdots \to 0 \ya{\hspace{2em}} 0 \ya{\hspace{3em}} \k[\bfP]_{22} \ya{\ro(22\ge 21)} \k[\bfP]_{21})\\
P^N\down &= (\cdots \to 0 \ya{\hspace{2em}} 0 \ya{\hspace{3em}} \k[\bfP]_{33} 
\ya{\ro(33\ge 22)} \k[\bfP]_{22} )\\
\end{aligned}
$$
with $|P\down^{M}|= (\dots,0,1,3,2), |P\down^{N}| = (\dots,0,0,1,1)$.
Padding $P^N\down$ by
$C\down:= \Cone(\id_{\k[\bfP]_{12}}) \ds \Cone(\id_{\k[\bfP]_{22}})[1]$
to have $F\down = P^N\down \ds C\down$,
we can define a pre-matching $B \in \pMatch(P^M\down, F\down)$
by the following table:
$$
\begin{array}{c|c|ccc|cc}
\deg& 2 &\multicolumn{3}{c|}1& \multicolumn{2}{c}0\\
\hline
P^{M_1}\down&\kP{23}&\kP{22}&\kP{13}& & \kP{12}&\\
P^{M_2}\down&& & & \kP{22}& &\kP{21}\\
\hline
P^{N}\down&& & &  \kP{33}& &\kP{22}\\
C\down&\kP{22}&\kP{22}&\kP{12}& & \kP{12}&\\
\hline
\dist & 1 & 0 & 1 & 1 & 0 & 1
\end{array}\ ,
$$
where the coefficient matrices of differentials under the above orders of
summands are given by the following table:
$$
\begin{array}{c|c|c}
 & \partial_1 & \partial_0 \\[5pt]
 \hline
 P^M\down & \bsmat{1\\-1\\0} & \bsmat{1 & 1 & 0\\0 & 0& 1}  \\[5pt]
 \hline
 F\down & \bsmat{1\\0\\0}  & \bsmat{0 & 1 & 0\\0 & 0& 1}
 \end{array}
$$
and hence $B$ does not satisfy the compatibility condition on differentials
for $(P^M\down, F\down)$.
However, we have the following commutative diagram,
where the vertical morphisms are isomorphisms (we omit $\ro(\cdots)$ parts in the entries of matrices for simplicity, namely express them by their coefficient matrices):
$$
\begin{tikzcd}[ampersand replacement=\&]
\makebox[3em][r]{$F'\down:\quad\kP{23}$} \& \kP{22} \ds \kP{13} \ds \kP{22} \& \kP{12} \ds \kP{21}\\
\makebox[3em][r]{$F\down:\quad\kP{23}$} \& \kP{22} \ds \kP{13} \ds \kP{22} \& \kP{12} \ds \kP{21}
\Ar{1-1}{1-2}{"{\bsmat{1\\-1\\0}}"}
\Ar{1-2}{1-3}{"{\bsmat{1&1&0\\0&0&1}}"}
\Ar{2-1}{2-2}{"{\bsmat{1\\0\\0}}"}
\Ar{2-2}{2-3}{"{\bsmat{0&1&0\\0&0&1}}"}
\Ar{1-1}{2-1}{equal}
\Ar{1-2}{2-2}{"{\bsmat{1&0&0\\1&1&0\\0&0&1}}"}
\Ar{1-3}{2-3}{equal}
\end{tikzcd}.
$$
Define a complex $F'\down$ by the upper row in the diagram.
Then $\pMatch(P^M\down, F\down) = \pMatch(P^M\down, F'\down)$,
and since $F'\down \cong F\down$, we have $(P^M\down, F'\down) \in \Padnn(P^M\down, P^N\down)$, and the $B$ above
satisfies the compatibility on differentials for $(P^M\down, F'\down)$,
and hence $B \in \Match(P^M\down, F'\down)$, and $\cost(B) = 1$.
Therefore $\distCM(P\down^{M},P\down^{N}) \le \cost(B) = 1$.  Since $\hat{\al}(P^M\down) \ne \hat{\al}(P^N\down)$, it holds
by Lemma \ref{lem:distB=zero} (2) that
\[
\distCM(P\down^{M},P\down^{N})=1.
\]
\end{example}

The following is a slightly simplified version of an example reported by
Luis Scoccola to the first version of the preprint.

\begin{example}
\label{exm:Luis}
Let $\bfP:= \{1 < 2 < 3 < 4 < 5\}$, and let $M:= V_{[2,4]}$ and $N:= 0$.
Then
\begin{equation}
\label{eq:Luis=1}
\distCM'(P^M\down, P^N\down) = 1.
\end{equation}
Indeed, we have
$$
\begin{aligned}
P^M\down &= (\dots \to 0 \to \kP{5} \ya{\ro(5\ge 2)} \kP{2}),\\
P^N\down &= (\dots \to 0 \to 0).
\end{aligned}
$$
Let $E\down:= P^M\down \ds \Cone(\id_{\kP{3}}) \ds \Cone(\id_{\kP{4}})$ and
$F\down:= \Cone(\id_{\kP{2}}) \ds \Cone(\id_{\kP{3}}) \ds \Cone(\id_{\kP{4}})$.
Then we can define $B \in \pMatch(E\down, F\down)$ by the following table:
$$
\begin{array}{c|c|c}
\deg & 1 & 0\\
\hline
E\down & \kP{3} \ \kP{4}\ \kP{5} & \kP{2}\ \kP{3} \ \kP{4}\\
F\down & \kP{2}\ \kP{3} \ \kP{4} & \kP{2}\ \kP{3} \ \kP{4}\\
\hline
\dist & 1\hspace{2em} 1\hspace{2em} 1 & 0\hspace{2em} 0\hspace{2em} 0
\end{array}.
$$
Hence $\distCM'(P^M\down, P^N\down) \le \cost(B) = 1$.
Since $\hat{\al}(P^M\down) \ne \hat{\al}(P^N\down)$, \eqref{eq:Luis=1} holds
by Lemma \ref{lem:distB=zero} (2).

In this case, under these orders of summands, we have
$$
\Mat(\partial^E_0) = \bsmat{0&0&1\\1&0&0\\0&1&0},\quad
\Mat(\partial^F_0) = \bsmat{1&0&0\\0&1&0\\0&0&1},
$$
and hence $B$ above does not satisfy the compatibility of differentials.
If we define $B' \in \pMatch(E\down, F\down)$ by the following table:
$$
\begin{array}{c|c|c}
\deg & 1 & 0\\
\hline
E\down & \kP{5}\ \kP{3} \ \kP{4} & \kP{2}\ \kP{3} \ \kP{4}\\
F\down & \kP{2}\ \kP{3} \ \kP{4} & \kP{2}\ \kP{3} \ \kP{4}\\
\hline
\dist & 3\hspace{2em} 0\hspace{2em} 0 & 0\hspace{2em} 0\hspace{2em} 0
\end{array},
$$
then $B' \in \Match(E\down, F\down)$, and $\cost(B') = 3$.

Similarly, for $\bfP = \{1 < 2 < \cdots < n\}$ with $n \ge 2$,
we see that $\distCM'(P^M\down, P^N\down) = 1$ for 
$M = V_{[a,b]}$ and $N = 0$ for all $1 \le a \le b < n$.

\end{example}

\section{Stability Theorem}
\label{sec:stability}

We now relate the two distances defined above. Informally: a Galois coupling of $M$ and $N$ controls, via restriction, a pair of projective resolutions whose degreewise summands can be matched with cost bounded by the coupling cost. Hence the complex matching distance between minimal projective resolutions is at most the G\"ulen--McCleary distance.

The next lemma says that pulling back along the right adjoint of a Galois connection sends the indecomposable projective at $x\in\bfQ$ to the indecomposable projective at $f(x)\in\bfP$.

\begin{lemma}
\label{lem:rt-adj-prj-ind}
If $f:\bfQ \rightleftarrows \bfP:g$ is a Galois connection of posets, then
for each $x \in \bfQ$, we have an isomorphism
\[
g^{*}\bigl(\k[\bfQ]_x\bigr)\ \cong\ \k[\bfP]_{f(x)}
\qquad\text{in }\vect^{\bfP}
\]
that is natural in $x$.
\end{lemma}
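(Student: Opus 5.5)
The plan is to compute $g^{*}(\k[\bfQ]_x)$ pointwise and then compare it with $\k[\bfP]_{f(x)}$ via the Galois correspondence. For $p\in\bfP$ we have
\[
g^{*}(\k[\bfQ]_x)(p)=\k[\bfQ](x,g(p)),
\]
which is $\k\,\pi_{g(p)\ge x}$ if $x\le g(p)$ and $0$ otherwise. By Definition~\ref{dfn:Gal-conn}, $x\le g(p)$ holds if and only if $f(x)\le p$. Hence the supports agree:
\[
\supp g^{*}(\k[\bfQ]_x)=\{p\mid f(x)\le p\}=\supp\k[\bfP]_{f(x)}.
\]
On each point of this common support we define $\phi_{x,p}\colon \k\,\pi_{p\ge f(x)}\to \k\,\pi_{g(p)\ge x}$ by sending basis element to basis element.

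Next we check naturality in $p$. For $p\le p'$ in the support, $g^{*}(\k[\bfQ]_x)$ sends $\pi_{g(p)\ge x}$ to $\pi_{g(p')\ge x}$, since $g$ is monotone and composition in $\k[\bfQ]$ sends basis morphisms to basis morphisms. Likewise $\k[\bfP]_{f(x)}$ sends $\pi_{p\ge f(x)}$ to $\pi_{p'\ge f(x)}$. So the squares commute; outside the support both maps are zero. This gives an isomorphism $\phi_x\colon \k[\bfP]_{f(x)}\to g^{*}(\k[\bfQ]_x)$.

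Alternatively, and more conceptually, one can use the Yoneda lemma: by Corollary~\ref{cor:3_adj}, $g^{*}\cong f_{!}$, and $f_{!}$ sends representables to representables, $f_{!}\k[\bfQ]_x\cong\k[\bfP]_{f(x)}$, by the adjunction $f_!\dashv f^{*}$:
\[
\vect^\bfP(f_!\k[\bfQ]_x,N)\cong\vect^\bfQ(\k[\bfQ]_x,f^{*}N)\cong N(f(x)).
\]
I would present the direct computation, since it gives explicit maps useful later, and mention the Yoneda argument as a cross-check.

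Naturality in $x$ is the main point to verify. For $x\le x'$ in $\bfQ$, the morphism $\ro(x'\ge x)\colon\k[\bfQ]_{x'}\to\k[\bfQ]_x$ is precomposition with $\pi_{x'\ge x}$. Applying $g^{*}$ gives, at $p$, the map $\pi_{g(p)\ge x'}\mapsto\pi_{g(p)\ge x}$. On the other side, $f(x)\le f(x')$ and $\ro(f(x')\ge f(x))$ sends $\pi_{p\ge f(x')}\mapsto\pi_{p\ge f(x)}$. Since all these are basis-to-basis maps on the relevant supports (zero elsewhere), the square commutes: $g^{*}(\ro(x'\ge x))\circ\phi_{x'}=\phi_x\circ\ro(f(x')\ge f(x))$. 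The only subtlety is the support bookkeeping, namely that $p\ge f(x')$ implies $g(p)\ge x'\ge x$, which is again the Galois correspondence. I expect no real obstacle beyond this.
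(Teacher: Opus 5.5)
Your proof is correct and follows essentially the same route as the paper, which identifies $g^{*}(\k[\bfQ]_x)(y)=\k[\bfQ](x,g(y))\cong\k[\bfP](f(x),y)$ pointwise via the Galois correspondence $\bfQ(x,g(y))\cong\bfP(f(x),y)$; you spell out the naturality checks in $y$ and in $x$ that the paper leaves implicit. Your alternative argument via $g^{*}\cong f_{!}$ and the Yoneda lemma is also valid and works as a cross-check.
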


\begin{proof}
For $y\in\bfP$, the value of $g^{*}\k[\bfQ](x,\blank)$ at $y$ is
\[
(g^{*}\k[\bfQ](x,\blank))(y)=\k[\bfQ](x,g(y)) \cong \k[\bfP](f(x),y)
\]
natural in $x$ and $y$ because $\bfQ(x,g(y)) \cong \bfP(f(x),y)$.
Hence $g^{*}\k[\bfQ](x,\blank)\cong \k[\bfP](f(x),\blank)$ in $\vect^{\bfP}$.
\end{proof}

\begin{theorem}[Stability]
\label{thm:stability}
Let $(\bfP, d_\bfP)$ be a finite metric poset. Then for any $M, N \in \vect^\bfP$,
\[
\distCM\bigl(P\down^M, P\down^N\bigr)\ \le\ \dGT(M, N).
\]
\end{theorem}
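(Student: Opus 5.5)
It suffices to show that for every Galois coupling $(\bfQ, f\dashv g, h\dashv i, \Ga)$ of $(M,N)$ there is a pair $(E\down,F\down)\in\Padnn(P\down^M,P\down^N)$ and a matching $B\in\Match(E\down,F\down)$ with $\cost(B)\le\cost(\Ga)$. If no coupling exists, $\dGT(M,N)=\infty$ and there is nothing to prove. The inequality then follows by taking the infimum over couplings.

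\textbf{Step 1: a common resolution.} Fix a coupling and choose a projective resolution $R\down\to\Ga$ in $\vect^\bfQ$, say the minimal one. It is bounded because $\bfQ$ is finite, so $\k[\bfQ]$ has finite global dimension. Write $R_j=\Ds_{q\in\smd(R_j)}\k[\bfQ]_q$, with differentials given by coefficient matrices $\Mat(\partial^R_j)=[c^{(j)}_{q',q}]$ as in Lemma~\ref{lem:mor-betw-proj}. Since $g^*$ and $i^*$ are exact (Proposition~\ref{prop:Kan-posets}), $E\down:=g^*R\down$ is a resolution of $g^*\Ga\cong M$ and $F\down:=i^*R\down$ is a resolution of $i^*\Ga\cong N$. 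By Proposition~\ref{prop:pullback_preserves_projectives}, applied to $g^*\cong f_!$ and $i^*\cong h_!$, both are complexes of projectives. Lemma~\ref{lem:rt-adj-prj-ind} gives
\[
E_j\cong\Ds_{q\in\smd(R_j)}\k[\bfP]_{f(q)},\qquad F_j\cong\Ds_{q\in\smd(R_j)}\k[\bfP]_{h(q)}.
\]
By Lemma~\ref{lem:all-prj-resol}, $E\down\in\Padnn(P\down^M)$ and $F\down\in\Padnn(P\down^N)$. Since $|E\down|=|R\down|=|F\down|$, we get $(E\down,F\down)\in\Padnn(P\down^M,P\down^N)$.

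\textbf{Step 2: the matching.} Index summands by $q\in\smd(R_j)$ and define $B_j\colon f(q)\mapsto h(q)$. The cost of $B$ is $\sup_q d_\bfP(f(q),h(q))\le\cost(\Ga)$. It remains to check compatibility with the differentials, and this is the main point. Under the naturality of Lemma~\ref{lem:rt-adj-prj-ind}, the map $g^*\ro(q'\ge q)$ corresponds to $\ro(f(q')\ge f(q))$. Concretely, $\k[\bfQ](q,g(y))\to\k[\bfQ](q',g(y))$ is precomposition with $\pi_{q'\ge q}$, and under the adjunction bijection this becomes precomposition with $\pi_{f(q')\ge f(q)}$.

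One must rule out a scalar twist or a sign in this identification. Since every hom space is spanned by the unique poset morphism, the adjunction isomorphism $\k[\bfQ](q,g(y))\cong\k[\bfP](f(q),y)$ sends basis element to basis element. Hence the coefficient matrices agree on the nose: $\Mat(\partial^E_j)=\Mat(\partial^R_j)=\Mat(\partial^F_j)$, with $c^{(j)}_{q',q}\ne 0$ forcing $q'\ge q$ and therefore $f(q')\ge f(q)$ and $h(q')\ge h(q)$, so the normalization condition holds.

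If $E\down$ is only isomorphic to $g^*R\down$, replace it by the complex with exactly these summands and differentials; it is still in $\Padnn(P\down^M)$. Then $B$ acts as the identity on the common index sets, so the square \eqref{eq:matching-eq-cpx} commutes, and Lemma~\ref{lem:matching-comm-cpx} gives $B\in\Match(E\down,F\down)$.

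\textbf{Step 3: conclusion.} We obtain $\distCM(P\down^M,P\down^N)\le\distM(E\down,F\down)\le\cost(B)\le\cost(\Ga)$, and taking the infimum over couplings gives the theorem. The main obstacle is the basis-level compatibility in Step 2: Lemma~\ref{lem:rt-adj-prj-ind} is stated only up to natural isomorphism, so we must verify that the induced isomorphism is exactly the basis-preserving one on each indecomposable projective. Only then does the coefficient matrix survive unchanged, as Lemma~\ref{lem:mor-betw-proj} and the matching condition require.
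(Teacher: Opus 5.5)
Your proposal is correct and follows essentially the same route as the paper: you pull back a single projective resolution of the apex module $\Gamma$ along $g^*$ and $i^*$, observe that both pulled-back differentials have coefficient matrix $\Mat(\partial^R_j)$, and match summands by the identity on the index sets $\smd(R_j)$, giving cost at most $\cost(\Gamma)$. You also check that the isomorphism of Lemma~\ref{lem:rt-adj-prj-ind} is basis-preserving, and you choose the minimal resolution so that $R\down$ is bounded; both points are left tacit in the paper's proof. (One small slip: the map induced by $\ro(q'\ge q)$ goes from $\k[\bfQ](q',g(y))$ to $\k[\bfQ](q,g(y))$, not the other way, which does not affect the argument.)
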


\begin{proof}
If there is no Galois coupling of $(M,N)$, then $\dGT(M,N)=\infty$ and the claim is tautological. Otherwise fix $\varepsilon>0$ and choose a coupling $(\bfQ, f \dashv g,\ h \dashv i,\ \Gamma)$ with
\[
g^\ast\Gamma\cong M,\qquad i^\ast\Gamma\cong N,\qquad \cost(\Gamma)\ \le\ \dGT(M,N)+\varepsilon.
\]
Let $R\down\to\Gamma$ be any projective resolution in $\vect^{\bfQ}$. Since precomposition is exact (Proposition~\ref{prop:Kan-posets}) and, for a Galois connection, preserves projectives (Proposition~\ref{prop:pullback_preserves_projectives}), the complexes
\[
E\down:=g^{*}R\down\quad\text{and}\quad F\down:=i^{*}R\down
\]
are projective resolutions of $M$ and $N$ in $\vect^{\bfP}$.

By Lemmas \ref{lem:ind-proj} and \ref{lem:mor-betw-proj},
we may set
$$
\begin{aligned}
R_i &= \Ds_{x \in \smd(R_i)} \k[\bfQ]_x,\\
\Mat(\partial^R_i) &= [a^{(i)}_{x^{(i+1)},x^{(i)}}]_{(x^{(i+1)},x^{(i)})\,\in\smd(R_{i+1}) \times \smd(R_i)}, \text{ and}\\
\partial^R_i &= [a^{(i)}_{x^{(i+1)},x^{(i)}}\,\ro(x^{(i+1)} \ge x^{(i)})]_{(x^{(i+1)},x^{(i)})\,\in\smd(R_{i+1}) \times \smd(R_i)}
\colon R_{i+1} \to R_i.
\end{aligned}
$$
By Lemma~\ref{lem:rt-adj-prj-ind} (and the analogue for $h\dashv i$), we have
$$
\begin{aligned}
E_i &= \Ds_{x \in \smd(R_i)} \k[\bfP]_{f(x)},
\quad
F_i = \Ds_{x \in \smd(R_i)} \k[\bfP]_{h(x)},
\\
\partial^E_i &= [a^{(i)}_{x^{(i+1)},x^{(i)}}\,\ro(f(x^{(i+1)}) \ge f(x^{(i)}))]_{(x^{(i+1)},x^{(i)})\,\in\smd(R_{i+1}) \times \smd(R_i)},\\
\partial^F_i &= [a^{(i)}_{x^{(i+1)},x^{(i)}}\,\ro(h(x^{(i+1)}) \ge h(x^{(i)}))]_{(x^{(i+1)},x^{(i)})\,\in\smd(R_{i+1}) \times \smd(R_i)}.
\end{aligned}
$$
Here note that the following holds:
\begin{equation}
\label{eq:Mat-E,F}
\left\{
\begin{aligned}
\Mat(\partial^E_i) &= [a^{(i)}_{x^{(i+1)},x^{(i)}}]_{(x^{(i+1)},x^{(i)})\,\in\smd(R_{i+1}) \times \smd(R_i)},\\
\Mat(\partial^F_i) &= [a^{(i)}_{x^{(i+1)},x^{(i)}}]_{(x^{(i+1)},x^{(i)})\,\in\smd(R_{i+1}) \times \smd(R_i)}.
\end{aligned}
\right.
\end{equation}
Indeed, if $a^{(i)}_{x^{(i+1)},x^{(i)}} \ne 0$, then
$x^{(i+1)} \ge x^{(i)}$, which shows that both
$f(x^{(i+1)}) \ge f(x^{(i)})$ and $h(x^{(i+1)}) \ge h(x^{(i)})$,
as desired.

Hence $|E\down|=|F\down|$ and $(E\down,F\down)\in\Padnn(P\down^M,P\down^N)$. Define the degreewise bijection $B_i$ by the identity on indices $x\in \smd(R_i)$:
\[
B_i:\ \k[\bfP]_{f(x)} \longmapsto \k[\bfP]_{h(x)}\qquad(x\in \smd(R_i)).
\]
Then by \eqref{eq:Mat-E,F}, $B = (B_i)_{i \ge 0}$ satisfies the compatibility condition on
differentials, and hence $B$ is a matching of $(E\down, F\down)$.
Here we have
\[
\distM(E\down,F\down)\ \le\ \cost(B)\ =\ \sup_{i}\ \sup_{x\in S_i} d_{\bfP}\bigl(f(x),h(x)\bigr)
\ \le\ \sup_{x\in\bfQ} d_{\bfP}\bigl(f(x),h(x)\bigr)\ =\ \cost(\Gamma).
\]
Taking the infimum over all compatible paddings yields
\[
\distCM\bigl(P\down^M,P\down^N\bigr)\ \le\ \distM(E\down,F\down)\ \le\ \cost(\Gamma)\ \le\ \dGT(M,N)+\varepsilon.
\]
This completes the proof.
\end{proof}

Note that in the proof of Theorem \ref{thm:stability}, the following was proved.

\begin{corollary}
\label{cor:Res-nonempty}
For any $M, N \in \vect^\bfP$, if
there exists a Galois coupling of $(M, N)$, then we have
$\Padnn(P^M\down, P^N\down) \ne \varnothing$, and moreover,
there exists some $(E\down, F\down) \in \Padnn(P^M\down, P^N\down)$ such that
$$
\Match(E\down, F\down) \ne \varnothing.
$$
\end{corollary}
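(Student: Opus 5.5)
The plan is to read this off from the construction in the proof of Theorem~\ref{thm:stability}; no new ingredient should be needed. Fix a Galois coupling $(\bfQ, f\dashv g,\ h\dashv i,\ \Gamma)$ of $(M,N)$. Since $\bfQ$ is finite, $\k[\bfQ]$ has finite global dimension, so $\Gamma$ admits a bounded projective resolution $R\down\to\Gamma$ by finitely generated projectives (e.g.\ a minimal one). Put $E\down:=g^{*}R\down$ and $F\down:=i^{*}R\down$.

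First I will show that $E\down$ and $F\down$ are projective resolutions of $M$ and $N$. By Proposition~\ref{prop:Kan-posets}, $g^{*}$ and $i^{*}$ are exact, and by Proposition~\ref{prop:pullback_preserves_projectives} they preserve projectives. Since $g^{*}\Gamma\cong M$ and $i^{*}\Gamma\cong N$, this gives the claim. Then Lemma~\ref{lem:all-prj-resol} yields $E\down\in\Padnn(P\down^M)$ and $F\down\in\Padnn(P\down^N)$.

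Next I will compute the terms. By Lemma~\ref{lem:rt-adj-prj-ind} and its analogue for $h\dashv i$, if $R_j=\Ds_{x\in\smd(R_j)}\k[\bfQ]_x$, then
\[
E_j\cong\Ds_{x}\k[\bfP]_{f(x)}, \qquad F_j\cong\Ds_x\k[\bfP]_{h(x)}.
\]
Hence $|E\down|=|F\down|=|R\down|$, so $(E\down,F\down)\in\Padnn(P\down^M,P\down^N)$ and in particular this set is nonempty.

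Finally, I will define $B_j$ as the bijection $f(x)\mapsto h(x)$ indexed by $x\in\smd(R_j)$. The step needing care is identifying the coefficient matrices of $\partial^E_j$ and $\partial^F_j$ with that of $\partial^R_j$, because the normalization in Lemma~\ref{lem:mor-betw-proj} requires the nonzero entries to sit over comparable pairs. This holds because a nonzero coefficient $a^{(j)}_{x',x}$ forces $x'\ge x$ in $\bfQ$. Monotonicity of $f$ and $h$ then gives $f(x')\ge f(x)$ and $h(x')\ge h(x)$. Naturality in Lemma~\ref{lem:rt-adj-prj-ind} sends $\ro(x'\ge x)$ to $\ro(f(x')\ge f(x))$ and to $\ro(h(x')\ge h(x))$ respectively. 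Thus all three coefficient matrices coincide, as in \eqref{eq:Mat-E,F}, and the compatibility condition of Definition~\ref{dfn:matching} holds trivially for $B$. So $B\in\Match(E\down,F\down)$, which proves the corollary.
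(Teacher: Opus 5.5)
Your proposal is correct and is essentially the paper's own argument. The corollary is simply read off from the proof of Theorem~\ref{thm:stability}: pull back a projective resolution $R\down$ of the apex module $\Gamma$ along $g^{*}$ and $i^{*}$ to get $(E\down,F\down)\in\Padnn(P\down^M,P\down^N)$ with $|E\down|=|F\down|=|R\down|$, then match summands by the identity on indices in $\smd(R_j)$, with compatibility coming from \eqref{eq:Mat-E,F}. Your explicit appeal to Lemma~\ref{lem:all-prj-resol} and your check that the normalization of coefficient matrices survives pullback just spell out steps the paper leaves implicit.
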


The following gives an illustration of our proof of
Theorem \ref{thm:stability}.

\begin{example}\label{exm:stability-thm}
We compute the complex matching distance of the minimal projective
resolutions of the modules $M$ and $N$ in
Example~\ref{exm:stability}.
We first compute a minimal projective resolution of
$\Ga = V^\bfQ_\bfQ \oplus V^\bfQ_{\{1_L\}}$.
A minimal projective resolutions of $V^\bfQ_\bfQ$ and
$V^\bfQ_{\{1_L\}}$ are given as follows:
$$
\begin{aligned}
0 \to \k[\bfQ]_2 \xrightarrow{\bsmat{\ro(2 \ge 1_L)\\-\ro(2 \ge 1_R)}} \k[\bfQ]_{1_L}
\oplus \k[\bfQ]_{1_R} \xrightarrow{[\ro(1_L)\ \ro(1_R)]} &\ V^\bfQ_\bfQ \to 0,\\
0 \to \k[\bfQ]_2 \xrightarrow{\ro(2 \ge 1_L)} \k[\bfQ]_{1_L}
\xrightarrow{\ro(1_L)} &\ V^\bfQ_{\{1_L\}} \to 0.
\end{aligned}
$$
Therefore, a minimal projective resolution $P^\Ga\down$ of $\Ga$
as a nonnegative complex is given by
$$
\cdots \to 0 \to \k[\bfQ]_2 \oplus \k[\bfQ]_2 \xrightarrow{\bsmat{\ro(2 \ge 1_L) & 0\\-\ro(2 \ge 1_R)& 0\\0& \ro(2 \ge 1_L)}}
\overbrace{\k[\bfQ]_{1_L} \ds \k[\bfQ]_{1_R}\ds \k[\bfQ]_{1_L}}^{\deg 0}.
$$
Up to the natural isomorphism stated in Lemma \ref{lem:rt-adj-prj-ind},
the exact functors $g^*$ and $i^*$ send this to the following, which are projective resolutions of $M$ and $N$, respectively:
\begin{equation}
\label{eq:E-F}
\begin{aligned}
E\down&:= (\cdots \to 0 \to \k[\bfP]_2 \ds \k[\bfP]_2
\xrightarrow{\bsmat{\ro(2 \ge 1) & 0\\-\ro(2\ge 2)& 0\\0& \ro(2 \ge 1)}}
\overbrace{\k[\bfP]_{1} \ds \k[\bfP]_{2} \ds \k[\bfP]_{1}}^{\deg 0} 
),\\[-30pt]
&\phantom{:= (\cdots \to 0 \to}
\hspace{20pt}\rotatebox{-90}{$\mapsto$}
\hspace{30pt}\rotatebox{-90}{$\mapsto$}
\phantom{\xrightarrow{\bsmat{\ro(1\le 2) & 0\\-\ro(2\le 2)& 0\\0& \ro(1\le 2)}}}
\hspace{30pt}\rotatebox{-90}{$\mapsto$}
\hspace{30pt}\rotatebox{-90}{$\mapsto$}
\hspace{30pt}\rotatebox{-90}{$\mapsto$}
\\[-20pt]
F\down&:= (\cdots \to 0 \to \k[\bfP]_2 \ds \k[\bfP]_2 
\xrightarrow{\bsmat{\ro(2\ge 2) & 0\\-\ro(2 \ge 1)& 0\\0& \ro(2\ge 2)}}
\underbrace{\k[\bfP]_{2} \ds \k[\bfP]_{1} \ds \k[\bfP]_{2}}_{\deg 0}
).
\end{aligned}
\end{equation}
Since $\ro(2\ge 1) \circ \ro(2\ge 2) = \ro(\pi_{2\ge 2}\pi_{2 \ge 1}) = \ro(2\ge 1)$, an elementary row operation shows that
the matrix $\bsmat{\ro(2 \ge 1) & 0\\-\ro(2\ge 2)& 0\\0& \ro(2 \ge 1)}$ in $E\down$ is equivalent to $\bsmat{0 & 0\\-\ro(2\ge 2)& 0\\0& \ro(2 \ge 1)}$, and
the matrix $\bsmat{\ro(2\ge 2) & 0\\-\ro(2 \ge 1)& 0\\0& \ro(2\ge 2)}$ in $F\down$
is equivalent to $\bsmat{\ro(2\ge 2) & 0\\0& 0\\0& \ro(2\ge 2)}$,
which means that
$$
\begin{aligned}
E\down &\cong P^M\down \ds \Cone(\id_{\k[\bfP]_2}),\ \text{and}\\
F\down &\cong P^N\down \ds \Cone(\id_{\k[\bfP]_2}) \ds \Cone(\id_{\k[\bfP]_2}).
\end{aligned}
$$
Define a matching $B \in \Match(E\down, F\down)$
vertically as displayed in \eqref{eq:E-F}.
Then clearly, $\cost(B) = 1$, and hence
we have $\distCM\bigl(P\down^M,P\down^N\bigr) \le 1$.
Since $\hat{\al}([P^M\down]) \ne \hat{\al}([P^N\down])$, we have
$\distCM\bigl(P\down^M,P\down^N\bigr) > 0$ by
Lemma \ref{lem:distB=zero}.
Therefore, $\distCM\bigl(P\down^M,P\down^N\bigr) = 1$.
\end{example}

\subsection{Examples}

We now revisit our running 1D and 2D examples to illustrate the stability
inequality.  
In both cases the G\"ulen--McCleary distance and the complex matching distance
coincide, showing that the bound in Theorem~\ref{thm:stability} is sharp.

\begin{example}\label{ex:stability-chain}
From Examples~\ref{ex:gtd-chain} and~\ref{ex:bneck-chain} we have
\[
\dGT(M,N)=1,
\qquad
\distCM(P\down^{M},P\down^{N})=1.
\]
Hence the stability inequality
\[
\distCM(P\down^{M},P\down^{N})\ \le\ \dGT(M,N)
\]
holds with equality.

On the transport side, the Galois coupling moves each interval in $M$ forward
by one step in the parameter.  On the resolution side, 
padding by
contractible cones aligns the minimal projective resolutions and produces a matching of the same cost.
Thus stability is numerically sharp in this one-parameter example.
\end{example}

\begin{example}\label{ex:stability-2d}
For the $2$-parameter modules $M$ and $N$ from Example~\ref{ex:gtd-2d}, we
computed
\[
\dGT^{\bfP}(M,N)=1
\qquad\text{and}\qquad
\distCM(P\down^{M},P\down^{N})=1
\]
in Examples~\ref{ex:gtd-2d} and~\ref{ex:bneck-2d}.  Hence
\[
\distCM(P\down^{M},P\down^{N})
\ =\
\dGT^{\bfP}(M,N)
\ =\ 1,
\]
so the stability inequality holds with equality.

Geometrically, the transport coupling shifts each subspace of $M$ by
$(+1,+1)$ or $(0,+1)$ into the corresponding arm of the $L$-shape $N$.
On the resolution
side,
padding $P\down^{N}$ by two shifted cones equalizes the degreewise sizes,
and permits a matching of cost~$1$.  Stability is therefore sharp in this
two-parameter example as well.
\end{example}

\section{Application to Persistence}
\label{sec:persistence}

In this section we extract a persistence-like construction from a
$\bfP$-module by passing to the interval poset and taking kernels of structure
maps. We then show that the G\"ulen--McCleary stability inequality descends to
this construction, recovering classical bottleneck stability when $\bfP$ is a
finite chain.

\begin{definition}[Augmented poset]
\label{dfn:Int-2-fun}
Let $(\bfP,d_{\bfP})$ be a finite metric poset, $\top$ a symbol
not contained in $\bfP$.
We set $\ovl{\bfP}:= \bfP \cup \{\top\}$, and define a partial order
on it by extending that of $\bfP$ with additional order
$x < \top$ for all $x \in \bfP$ and $\top \le \top$.
If $\bfP$ and  $\bfQ$ are distinct finite posets,
then we treat the top elements $\top$ of $\oP$ and $\oQ$ are distinct
although we denote them by the same symbol.
If we need to distinguish the top element,
then we denote it by $\top_\bfP$ for $\bfP$.

We extend $d_{\bfP}$ to an \emph{extended} metric
$d_{\oP} \colon \oP \to [0,+\infty]$ by
\[
d_{\oP}(x,\top)=d_{\oP}(\top,x)=+\infty\ (x\neq\top), 
\qquad d_{\oP}(\top,\top)=0,
\]
which defines a finite metric poset $(\oP, d_{\oP})$.

We extend this correspondence $\bfP \mapsto \oP$
to a 2-functor $\ovl{(\blank)} \colon \Pos \to \Pos$ of the 2-category $\Pos$ of finite posets.
If $f \colon \bfP \to \bfQ$ is a monotone map, then we define a monotone
map $\ovl{f} \colon \oP \to \oQ$ to be an extension of $f$ with
$\ovl{f}(\top):= \top$.

If $f, g \colon \bfP \to \bfQ$ are monotone maps as functors, and
$\al \colon f \To g$ is a natural transformation, then we define
a natural transformation $\ovl{\al} \colon \ovl{f} \To \ovl{g}$ in an obvious way.
Namely, the fact that $\al \colon f \To g$ is a natural transformation
means that $f(x) \le g(x)$ for all $x \in \bfP$ and $\al_x:= \pi_{g(x) \ge f(x)} \colon f(x) \to g(x)$ in $\bfQ$ (see \eqref{eq:mor-in-poset} for the notation of $\pi$), which implies that
$\ovl{f}(x) \le \ovl{g}(x)$ for all $x \in \oP$, and therefore,
we can set $\ovl{\al}_x:= \pi_{\ovl{g}(x) \ge \ovl{f}(x)} \colon
\ovl{f}(x) \to \ovl{g}(x)$ in $\oQ$ to define a natural transformation
$\ovl{\al}:= (\ovl{\al}_x)_{x \in \oP} \colon \ovl{f} \To \ovl{g}$.
\end{definition}

We will apply the following definition to the finite poset $\oP$
for a finite poset $\bfP$.
In order to avoid confusion, we use other letters $\bfS$ and $\bfT$
to denote finite posets in this definition.

\begin{definition}
\label{dfn:Int-poset}
Let $\bfS$ be a finite metric poset.
Define the \emph{interval poset} $\Int\bfS$ to be the
full subposet of the product poset $\bfS \times \bfS$ whose underlying set is given by the graph of the binary relation $\le$ on $\bfS$, thus
\begin{equation}
\label{eq:IntS}
\Int \bfS:= \{(x,y) \in \bfS \times \bfS \mid x \le y\}.
\end{equation}
Equip $\Int\bfS$ with the $L^\infty$ extended metric
\[
d_{\Int\bfS}\bigl((x_1,y_1),(x_2,y_2)\bigr)
:=\max\{\,d_{\bfS}(x_1,x_2),\, d_{\bfS}(y_1,y_2)\,\}
\]
to define a finite metric poset $(\Int \bfS, d_{\Int \bfS})$.
\end{definition}

\begin{remark}
\label{rmk:closed-intervals}
Let $\bfS$ be a finite poset.
Then the correspondence
$(x,y) \mapsto [x,y]$ gives an isomorphism
from $\Int \bfS$ to the poset $\{[x,y] \mid x,y \in \bfS, x \le y\}$ of
{\em closed intervals} 
in $\bfS$ whose partial order is defined by
\[
[x_1,y_1]\le [x_2,y_2] \quad\Longleftrightarrow\quad x_1\le x_2
 \ \text{and}\ y_1\le y_2.
\]
By this similarity, we adopt the notation $\Int$ standing for intervals.
But note that this order is not given by the inclusion relation of intervals
as in other literature such as \cite{ASASHIBA2023100007}.
\end{remark}

\begin{definition}
\label{dfn:Int-2fun}
We extend the correspondence $\bfS \mapsto \Int\bfS$
to a 2-functor $\Int \colon \Pos \to \Pos$.
Let $f \colon \bfS \to \bfT$ be a monotone map.
We define $\Int f \colon \Int \bfS \to \Int \bfT$ by setting
$(\Int f)((x,y)):= (f(x), f(y))$ for all $(x,y) \in \Int \bfS$.
Note here that $(f(x), f(y)) \in \Int \bfS$ because $f$ is monotone.
If $(x_1, y_1) \le (x_2, y_2)$ in $\Int\bfS$, then
we have $(f(x_1), f(y_1)) \le (f(x_2), f(y_2))$ again because $f$ is monotone.
Hence we can define
$$
(\Int f)(\pi_{(x_2,y_2)\ge (x_1,y_1)}):= \pi_{(f(x_2), f(y_2))\ge (f(x_1), (y_1))}.
$$
Finally let $\al \colon f \To g$ be a natural transformation
between monotone maps $f, g \colon \bfS \to \bfT$ regarded as functors.
Then for each $x \in \bfS$, $\al_x \colon f(x) \to g(x)$ means that
$f(x) \le g(x)$ in $\bfT$ and $\al_x = \pi_{g(x)\ge f(x)}$.
Then for any $(x,y) \in \Int \bfS$, we have
$(f(x), f(y)) \le (g(x), g(y))$ in $\Int \bfT$.
Hence we can set
$$
(\Int \al)_{(x,y)}:= \pi_{[g(x), g(y)) \ge [f(x), f(y))} \colon
(\Int f)((x,y)) \to (\Int g)((x,y)),
$$
which defines a natural transformation
$\Int \al:= ((\Int \al)_{[x,y)})_{[x,y)\in \Int \bfS}
\colon \Int f \To \Int g$.
\end{definition}

\begin{remark}
To define $\Int f \colon \Int \bfS \to \Int \bfT$ for a monotone map
$f \colon \bfS \to \bfT$ of finite posets,
we need to have $(z, z) \in \Int \bfT$ for all $z \in \bfT$ because it may occur
that $f(x) = f(y)$ even if $x < y$ in $\bfS$.
Therefore, in \eqref{eq:IntS}, we cannot replace $x \le y$ by $x < y$.
Thus even if $\bfS$ is a finite linearly ordered set,
we cannot replace $\Int \bfS$ by the poset $\{[x,y) \mid x, y \in \bfS, x\le y\}$ of half-open intervals in $\bfS$ whose partial order is defined by the same was as in the
closed interval case in Remark \ref{rmk:closed-intervals}.
\end{remark}

By composing these 2-functors,
we obtain a 2-functor
\begin{equation}
\label{eq:2-fun-Int-ovl}
\Int \circ \ovl{(\blank)} \colon \Pos \to \Pos.
\end{equation}
This 2-functor is used in the proof of Lemma \ref{lem:Int-Galois}.

\begin{definition}
\label{dfn:ext-module}
We define a functor $\ext \colon \vect^\bfP \to \vect^{\oP}$ as follows.
Let $M$ be in $\vect^\bfP$.
Then $\ext(M):= \ovl{M} \colon \oP \to \vect$ is defined by
extending $M$ with $\ovl{M}(\top):= 0$ and $\ovl{M}(\pi_{\top\ge x}):= 0$
for all $x \in \bfP$.

Let $\be \colon M \to N$ be a morphism in $\vect^\bfP$.
Then $\ext(\be):= \ovl{\be} \colon \ovl{M} \to \ovl{N}$ is defined 
by extending $\be$ with $\ovl{\be}_{\top}:= 0 \colon \ovl{M}(\top) \to \ovl{N}(\top)$.
\end{definition}

\begin{definition}
\label{dfn:KM}
Let $M\in\vect^{\bfP}$.
Then we define $K(M)\in\vect^{\Int\oP}$ as follows.
For each $(x,y) \in \Int \oP$, we set
\[
K(M)((x,y)) :=
\Ker \ovl{M}(\pi_{y \ge x}).
\]
Note that we have $K(M)((x,\top)) = M(x)$ 
by the definition of $\ovl{M}$.
Next let $(x_1,y_1)\le (x_2,y_2)$ in $\Int\oP$.
Then we have a commutative diagram
$$
\begin{tikzcd}[column sep=50pt]
0 & K(M)((x_1,y_1)) & \ovl{M}(x_1) & \ovl{M}(y_1)\\
0 & K(M)((x_2,y_2)) & \ovl{M}(x_2) & \ovl{M}(y_2)
\Ar{1-1}{1-2}{}
\Ar{1-2}{1-3}{}
\Ar{1-3}{1-4}{"\ovl{M}(\pi_{y_1 \ge x_1})"}
\Ar{2-1}{2-2}{}
\Ar{2-2}{2-3}{}
\Ar{2-3}{2-4}{"\ovl{M}(\pi_{y_2 \ge x_2})"'}
\Ar{1-2}{2-2}{"K(M)(\pi_{(x_2,y_2) \ge (x_1,y_1)})"', dashed}
\Ar{1-3}{2-3}{"\ovl{M}(\pi_{x_2\ge x_1})"}
\Ar{1-4}{2-4}{"\ovl{M}(\pi_{y_2\ge y_1})"}
\end{tikzcd}
$$
with exact rows (first ignore the dashed arrow).
By the universality of the kernel, there exists a unique morphism
$K(M)(\pi_{(x_2,y_2) \ge (x_1,y_1)})$ making the diagram
commutative, which is a restriction of $\ovl{M}(\pi_{x_2\ge x_1})$.
As is easily seen, the above defines a functor
$K(M)\colon \Int \oP \to \vect$, an object of $\vect^{\Int\oP}$.

We extend this to a functor $K:\vect^{\bfP}\to \vect^{\Int\oP}$.
Let $\be \colon M \to N$ be in $\vect^\bfP$.
Then we define a morphism $K(\be):= (K(\be)_{(x,y)})_{(x,y)\in \Int\oP} \colon K(M) \to K(N)$ in $\vect^{\Int \oP}$
as follows.
For each $(x,y) \in \Int \oP$, we have a commutative diagram
$$
\begin{tikzcd}[column sep=50pt]
0 & K(M)((x,y)) & \ovl{M}(x) & \ovl{M}(y)\\
0 & K(N)((x,y)) & \ovl{N}(x) & \ovl{N}(y)
\Ar{1-1}{1-2}{}
\Ar{1-2}{1-3}{}
\Ar{1-3}{1-4}{"\ovl{M}(\pi_{y \ge x})"}
\Ar{2-1}{2-2}{}
\Ar{2-2}{2-3}{}
\Ar{2-3}{2-4}{"\ovl{M}(\pi_{y \ge x})"'}
\Ar{1-2}{2-2}{"K(\be)_{(x,y)}"', dashed}
\Ar{1-3}{2-3}{"\be_x"}
\Ar{1-4}{2-4}{"\be_y"}
\end{tikzcd}
$$
with exact rows.
Again the universality of the kernel yields a unique morphism
$K(\be)_{(x,y)}$ above
making the diagram commutative,
which is just the restriction of $\be_x$ to $K(M)((x,y))$.
Then it is easy to see that
$K:\vect^{\bfP}\to
\vect^{\Int\oP}$ becomes a functor.
When we need to distinguish $\bfP$, we denote it by $K_\bfP$.
\end{definition}

We record two lemmas that will be used below.

\begin{lemma}\label{lem:Int-Galois}
If $f:\bfQ\rightleftarrows\bfP:g$ is a Galois connection (resp.\ insertion), then so is
\[
\Int \ovl{f}:\Int\oQ\rightleftarrows\Int\oP:\Int \ovl{g}.
\]
\end{lemma}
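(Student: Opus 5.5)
The plan is to exploit the fact that $\Int\circ\ovl{(\blank)}$ is a $2$-functor $\Pos\to\Pos$, as set up in \eqref{eq:2-fun-Int-ovl}. A $2$-functor sends adjunctions to adjunctions. By Definition~\ref{dfn:Gal-conn} and Lemma~\ref{lem:Gal-conn}, a Galois connection $f\dashv g$ is exactly an adjunction in $\Pos$, i.e.\ a pair of monotone maps with $\id_{\bfQ}\le gf$ and $fg\le\id_{\bfP}$. The triangle identities hold automatically since posets are thin.

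\textbf{Step 1 (monotonicity).} I will note that $\ovl f$ and $\ovl g$ are monotone by Definition~\ref{dfn:Int-2-fun}. By Definition~\ref{dfn:Int-2fun}, $\Int\ovl f$ and $\Int\ovl g$ are then monotone maps between $\Int\oQ$ and $\Int\oP$, acting coordinatewise.

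\textbf{Step 2 (unit and counit inequalities).} Functoriality gives $\Int\ovl{g}\circ\Int\ovl f=\Int\ovl{gf}$ and $\Int\ovl f\circ\Int\ovl g=\Int\ovl{fg}$. It then suffices to show $u\le \ovl{gf}(u)$ for $u\in\oQ$ and $\ovl{fg}(x)\le x$ for $x\in\oP$. Both hold on $\bfQ$ and $\bfP$ by Lemma~\ref{lem:Gal-conn}, and they hold trivially at $\top$ since $\ovl f(\top)=\top=\ovl g(\top)$. Applying this coordinatewise, for $(u,v)\in\Int\oQ$ we get $(u,v)\le(\ovl{gf}(u),\ovl{gf}(v))$ in the product order, and dually for the counit. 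Lemma~\ref{lem:Gal-conn} then yields the Galois connection $\Int\ovl f\dashv\Int\ovl g$. The same conclusion can be phrased as the images $\Int\ovl\eta$ and $\Int\ovl\varepsilon$ of the unit and counit under the $2$-functor.

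\textbf{Step 3 (insertion case).} If moreover $f\circ g=\id_{\bfP}$, then $\ovl f\circ\ovl g=\ovl{fg}=\id_{\oP}$, because both sides fix $\top$. Hence $\Int\ovl f\circ\Int\ovl g=\Int\id_{\oP}=\id_{\Int\oP}$, and Lemma~\ref{lem:galois-connection-basics}(1) gives the insertion.

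The only point needing care is that the extended order on $\oP$ makes $\ovl f$ and $\ovl g$ monotone and preserves the inequalities at $\top$. This is immediate because $\top$ is the maximum and is fixed by both maps, so I expect no real obstacle. Everything else is formal functoriality.
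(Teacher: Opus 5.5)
Your proposal is correct and follows the paper's proof: the paper applies the $2$-functor $\Int\circ\ovl{(\blank)}$ to the adjoint system $(f,g,\eta,\varepsilon)$, and for insertions notes that $f\circ g=\id_{\bfP}$ gives $(\Int\ovl{f})\circ(\Int\ovl{g})=\id_{\Int\oP}$. Your Step~2 simply unpacks this $2$-functoriality by checking the unit and counit inequalities coordinatewise, including at $\top$, and then invoking Lemma~\ref{lem:Gal-conn}.
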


\begin{proof}
Apply the 2-functor $\Int\circ \ovl{(\blank)}$ defined
in \eqref{eq:2-fun-Int-ovl} to an adjoint system
$(f,g, \et, \ep)$ to obtain an adjoint system
$(\Into f,\Into g, \Into \et, \Into \ep)$.
Here for insertions, note that
if $f \circ g = \id_\bfP$, then $(\Into f) \circ (\Into g) = \id_{\Int \oP}$.
\end{proof}

\begin{lemma}\label{lem:Int-Lipschitz}
For any monotone maps $f,h:\bfQ\to\bfP$, we have
\[
\sup_{(u,v)\in\Int\oQ}
d_{\Int\oP}\bigl((\Into f)(u,v),(\Into h)(u,v)\bigr)
\ \le\
\sup_{w\in\oQ} d_{\oP}(\ovl{f}(w),\ovl{h}(w)).
\]
\end{lemma}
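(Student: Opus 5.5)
We prove the inequality pointwise: for each $(u,v)\in\Int\oQ$ we bound the term on the left by two instances of the quantity on the right, and then take suprema. Fix $(u,v)\in\Int\oQ$, so $u\le v$ in $\oQ$. By Definition~\ref{dfn:Int-2fun} we have $(\Into f)(u,v)=(\ovl{f}(u),\ovl{f}(v))$ and $(\Into h)(u,v)=(\ovl{h}(u),\ovl{h}(v))$. The metric on $\Int\oP$ from Definition~\ref{dfn:Int-poset} is the $L^\infty$ metric, so
\[
d_{\Int\oP}\bigl((\Into f)(u,v),(\Into h)(u,v)\bigr)
=\max\bigl\{d_{\oP}(\ovl{f}(u),\ovl{h}(u)),\ d_{\oP}(\ovl{f}(v),\ovl{h}(v))\bigr\}.
\]

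Both $u$ and $v$ lie in $\oQ$. Hence each of the two terms in the maximum is at most $\sup_{w\in\oQ} d_{\oP}(\ovl{f}(w),\ovl{h}(w))$, and so is their maximum. Since this holds for every $(u,v)\in\Int\oQ$, taking the supremum over $(u,v)$ gives the claimed inequality.

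The only point needing care is the extended values involving $\top$. Because $\ovl{f}(\top)=\ovl{h}(\top)=\top$ and $d_{\oP}(\top,\top)=0$, the point $w=\top$ contributes $0$ to the right-hand side. For $w\in\bfQ$ we have $\ovl{f}(w),\ovl{h}(w)\in\bfP$, so the distances are the finite values $d_\bfP(f(w),h(w))$. The right-hand side therefore equals $\sup_{w\in\bfQ}d_\bfP(f(w),h(w))$, and the infinite distances $d_{\oP}(x,\top)=\infty$ never arise. The comparison of maxima and suprema is valid in $[0,\infty]$ regardless, so no step presents a genuine obstacle.
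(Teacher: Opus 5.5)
Your proposal is correct and follows the paper's proof: you expand the $L^\infty$ metric on $\Int\oP$ as a maximum over the two coordinates $u,v\in\oQ$, bound each term by the supremum over $w\in\oQ$, and then take the supremum over $(u,v)$. Your remark that $w=\top$ contributes $0$, so the right-hand side equals $\sup_{w\in\bfQ}d_\bfP(f(w),h(w))$, is exactly the observation the paper makes right afterwards in the proof of Proposition~\ref{prop:K-Lipschitz}.
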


\begin{proof}
For any $(u,v) \in \Int \oQ$, we have
$$
d_{\Int\oP}\bigl((\Into f)(u,v),(\Into h)(u,v)\bigr)
= \max \{d_{\oP}(\ovl{f}(u), \ovl{h}(u)), d_{\oP}(\ovl{f}(v), \ovl{h}(v))\}
\le \sup_{w\in \oQ} d_{\oP}(\ovl{f}(w), \ovl{h}(w)),
$$
from which the inequality follows.
\end{proof}

\begin{lemma}\label{lem:K-commute}
For any monotone map $g:\bfP\to\bfQ$,
we have a strictly commutative diagram
$$
\begin{tikzcd}[column sep=40pt]
\vect^\bfQ & \vect^\bfP\\
\vect^{\Int \oQ} & \vect^{\Int \oP}
\Ar{1-1}{1-2}{"g^*"}
\Ar{2-1}{2-2}{"(\Into g)^*"'}\\
\Ar{1-1}{2-1}{"K_\bfQ"'}
\Ar{1-2}{2-2}{"K_\bfP"}
\end{tikzcd}
$$
of functors:
$(\Into g)^*\circ K_\bfQ = K_\bfP \circ g^*$.
\end{lemma}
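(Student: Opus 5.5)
We prove the identity $(\Into g)^*\circ K_\bfQ = K_\bfP \circ g^*$ by checking it on objects and on morphisms, directly from Definitions~\ref{dfn:ext-module} and~\ref{dfn:KM} and the definition of $\Into g$ as $(x,y)\mapsto(\ovl{g}(x),\ovl{g}(y))$ with $\ovl{g}(\top)=\top$. Since both composites are built from evaluations and kernels, it suffices to show that the same kernel subspaces appear on both sides, with the same structure maps. Because we claim strict equality, not just natural isomorphism, we use the paper's convention that $K(M)((x,y))$ is the literal subspace $\Ker \ovl{M}(\pi_{y\ge x})\subseteq \ovl{M}(x)$.

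The first step is a preliminary identity: $\ovl{g^*N} = \ovl{g}^{\,*}\ovl{N}$ as $\oP$-modules for every $N\in\vect^\bfQ$. On $\bfP$ both sides are $N\circ g$. At $\top$ both are $0$, since $\ovl{g}(\top)=\top$ and $\ovl{N}(\top)=0$. On a morphism $\pi_{\top\ge x}$, the left side is $0$ by definition, and the right side is $\ovl{N}(\pi_{\top\ge g(x)})=0$. The same check works on morphisms of $\vect^\bfQ$: $\ovl{g^*\be}=\ovl{g}^{\,*}\ovl{\be}$.

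Next we compare objects. Fix $(x,y)\in\Int\oP$. Then
\[
\bigl((\Into g)^*K_\bfQ(N)\bigr)((x,y)) = K_\bfQ(N)((\ovl{g}(x),\ovl{g}(y))) = \Ker \ovl{N}(\pi_{\ovl{g}(y)\ge\ovl{g}(x)}).
\]
Also
\[
K_\bfP(g^*N)((x,y)) = \Ker \ovl{g^*N}(\pi_{y\ge x}) = \Ker \ovl{N}(\ovl{g}(\pi_{y\ge x})).
\]
Since $\ovl{g}(\pi_{y\ge x})=\pi_{\ovl{g}(y)\ge\ovl{g}(x)}$, the two kernels are the same subspace of $\ovl{N}(\ovl{g}(x))$.

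For the structure maps, both sides are restrictions of the same linear map to this common subspace. For $(x_1,y_1)\le(x_2,y_2)$, both are the restriction of $\ovl{N}(\pi_{\ovl{g}(x_2)\ge\ovl{g}(x_1)})$. On one side this holds because $\Into g$ sends $\pi_{(x_2,y_2)\ge(x_1,y_1)}$ to $\pi_{(\ovl{g}(x_2),\ovl{g}(y_2))\ge(\ovl{g}(x_1),\ovl{g}(y_1))}$. On the other side it is the construction of $K_\bfP$ applied to $g^*N$. For a morphism $\be\colon N\to N'$, both sides at $(x,y)$ are the restriction of $\be_{\ovl{g}(x)}$ to the kernel. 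Uniqueness in the universal property of the kernel forces each dashed map to equal this restriction, so the two composites agree on morphisms as well.

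The main obstacle is bookkeeping, not mathematics. We must ensure the identifications are literal equalities, which is why we use subspace kernels. We must also handle the $\top$ cases separately. When $y=\top$ but $x\neq\top$, the kernel is all of $\ovl{N}(\ovl{g}(x))$. When $x=y=\top$, the kernel is $0$. In both cases the preliminary identity settles the comparison. Functoriality of $\Into g$ from Definition~\ref{dfn:Int-2fun} supplies the remaining compatibility.
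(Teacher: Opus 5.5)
Your proof is correct and takes the same approach as the paper: evaluate both composites at $(x,y)\in\Int\oP$ and identify each with the subspace $\Ker \ovl{N}(\pi_{\ovl{g}(y)\ge\ovl{g}(x)})$. Your preliminary identity $\ovl{g^*N}=\ovl{g}^{\,*}\ovl{N}$ and your explicit checks on structure maps and on morphisms spell out steps that the paper's object-level computation leaves implicit.
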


\begin{proof}
Let $M \in \vect^\bfQ$, and $(x,y) \in \Int \oP$.
Then
$$
\begin{aligned}
((\Into g)^*\circ K_Q)(M)((x,y)) &= K_Q(M)((\ovl{g}(x), \ovl{g}(y)))
= \Ker \ovl{M}(\pi_{\ovl{g}(y) \ge \ovl{g}(x)}),\\
(K_\bfP \circ g^*)(M)((x,y)) &= K_\bfP(M \circ g)((x,y))
= \Ker (\ovl{M \circ g})(\pi_{y \ge x}) = \Ker \ovl{M}(\pi_{\ovl{g}(y) \ge \ovl{g}(x)}).
\end{aligned}
$$
Hence $(\Into g)^*\circ K_\bfQ = K_\bfP \circ g^*$.
\end{proof}

\begin{lemma}
\label{lem:Gal-coupling-Into}
Let $(\bfQ,\, f \dashv g,\, h \dashv i,\, \Gamma)$ be a Galois coupling of the pair $(M, N)$.
Then
$(\Int \oQ,\, \Into f \dashv \Into g,\, \Into h \dashv \Into i,\, K(\Gamma))$
is a Galois coupling of the pair $(K(M), K(N))$.
\end{lemma}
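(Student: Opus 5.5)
By definition of a Galois coupling (Definition~\ref{def:coupling-insertion}), we have to verify three things. First, $\Int\oQ$ is a finite poset. Second, $\Into f\dashv\Into g$ and $\Into h\dashv\Into i$ are Galois insertions $\Int\oQ\rightleftarrows\Int\oP$. Third, $K(\Gamma)\in\vect^{\Int\oQ}$ satisfies $(\Into g)^*K(\Gamma)\cong K(M)$ and $(\Into i)^*K(\Gamma)\cong K(N)$.

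Finiteness is immediate, since $\oQ$ is finite and $\Int\oQ\subseteq\oQ\times\oQ$.

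For the insertions, we apply Lemma~\ref{lem:Int-Galois} directly to $f:\bfQ\rightleftarrows\bfP:g$ and to $h:\bfQ\rightleftarrows\bfP:i$. It yields Galois connections, and $\Into f\circ\Into g=\Int\ovl{f\circ g}=\id_{\Int\oP}$ because both $\ovl{(\blank)}$ and $\Int$ are functorial. Likewise $\Into h\circ\Into i=\id_{\Int\oP}$. One small point should be checked: $\ovl{(\blank)}$ must preserve the unit and counit inequalities $u\le gf(u)$ and $fg(x)\le x$ of Lemma~\ref{lem:Gal-conn}. This holds trivially at $\top$, since $\ovl f(\top)=\top=\ovl g(\top)$. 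The product-order description of $\Int$ then preserves these inequalities componentwise.

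For the module condition, we use Lemma~\ref{lem:K-commute} with the monotone maps $g,i:\bfP\to\bfQ$. It gives strict equalities $(\Into g)^*K_\bfQ(\Gamma)=K_\bfP(g^*\Gamma)$ and $(\Into i)^*K_\bfQ(\Gamma)=K_\bfP(i^*\Gamma)$. We then fix isomorphisms $g^*\Gamma\cong M$ and $i^*\Gamma\cong N$. Since $K_\bfP$ is a functor (Definition~\ref{dfn:KM}), it carries these to isomorphisms $K(g^*\Gamma)\cong K(M)$ and $K(i^*\Gamma)\cong K(N)$. This completes the verification.

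No step is a genuine obstacle. The most delicate part is confirming that Lemma~\ref{lem:K-commute} really applies here, that is, that $\ovl{M\circ g}=\ovl M\circ\ovl g$ as functors $\oP\to\vect$, including at $\top$ and on the arrows $\pi_{\top\ge x}$. This holds because both sides send $\top$ to $0$ and every arrow into $\top$ to the zero map.
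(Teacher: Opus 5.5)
Your proposal is correct and follows the paper's proof: Lemma~\ref{lem:Int-Galois} gives the two Galois insertions on $\Int\oP$, and Lemma~\ref{lem:K-commute} gives $(\Into g)^*K(\Gamma)=K(g^*\Gamma)$ and $(\Into i)^*K(\Gamma)=K(i^*\Gamma)$. Your additional remarks are sound minor refinements: finiteness of $\Int\oQ$, the check at $\top$, and using functoriality of $K$ to transport the isomorphisms $g^*\Gamma\cong M$ and $i^*\Gamma\cong N$, which the paper writes as equalities.
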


\begin{proof}
By Lemma~\ref{lem:Int-Galois},
we have Galois insertions $\Into f \dashv \Into g$
and $\Into h \dashv \Into i$.
Moreover, by Lemma \ref{lem:K-commute}, we have
$K(M) = K(g^*(\Ga)) = (\Into g)^*(K(\Ga))$, and similarly,
$K(N) = (\Into i)^*(K(\Ga))$.
\end{proof}

\begin{proposition}\label{prop:K-Lipschitz}
For all $M,N \in \vect^\bfP$, we have
\[
\dGT^{\Int\oP}(K(M),K(N))\ \le\ \dGT^{\bfP}(M,N).
\]
\end{proposition}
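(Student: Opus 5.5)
The plan is to push an arbitrary Galois coupling of $(M,N)$ through the interval construction and compare costs. If no Galois coupling of $(M,N)$ exists, then $\dGT^{\bfP}(M,N)=\infty$ and there is nothing to prove. Otherwise, take a coupling $(\bfQ, f\dashv g,\ h\dashv i,\ \Gamma)$ of $(M,N)$. By Lemma~\ref{lem:Gal-coupling-Into}, the quadruple $(\Int\oQ,\ \Into f\dashv\Into g,\ \Into h\dashv\Into i,\ K(\Gamma))$ is a Galois coupling of $(K(M),K(N))$. Hence $\dGT^{\Int\oP}(K(M),K(N))$ is at most the cost of this induced coupling, and it suffices to show that this cost is bounded by $\cost(\Gamma)$.

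For the cost comparison, Lemma~\ref{lem:Int-Lipschitz} gives
\[
\sup_{(u,v)\in\Int\oQ} d_{\Int\oP}\bigl((\Into f)(u,v),(\Into h)(u,v)\bigr)\ \le\ \sup_{w\in\oQ} d_{\oP}\bigl(\ovl f(w),\ovl h(w)\bigr).
\]
Now split the right-hand supremum over $w\in\oQ=\bfQ\sqcup\{\top\}$. For $w\in\bfQ$, we have $\ovl f(w)=f(w)\in\bfP$ and $\ovl h(w)=h(w)\in\bfP$, so $d_{\oP}$ agrees with $d_{\bfP}$ and the term is $d_{\bfP}(f(w),h(w))\le\cost(\Gamma)$. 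For $w=\top$, we have $\ovl f(\top)=\top=\ovl h(\top)$, so the distance is $d_{\oP}(\top,\top)=0$. The infinite distances to $\top$ never occur, because both $\ovl f$ and $\ovl h$ send $\bfQ$ into $\bfP$ and send $\top$ to $\top$.

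Therefore the cost of the induced coupling is at most $\cost(\Gamma)$. Taking the infimum over all couplings of $(M,N)$ (for instance, choosing one with cost at most $\dGT(M,N)+\varepsilon$) gives the claim. The only point needing care is this check that the extended metric on $\oP$ does not introduce an infinite term. Also, the cost on $\Int\oP$ is computed with $d_{\Int\oP}$, which is exactly the quantity bounded in Lemma~\ref{lem:Int-Lipschitz}.
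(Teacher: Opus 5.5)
Your proposal is correct and follows the paper's proof essentially step for step: push the coupling through Lemma~\ref{lem:Gal-coupling-Into}, bound its cost via Lemma~\ref{lem:Int-Lipschitz}, observe that $w=\top$ contributes $d_{\oP}(\top,\top)=0$, so the supremum over $\oQ$ reduces to $\cost(\Gamma)$, and then take the infimum over couplings. Your remark that the infinite distances to $\top$ never arise is exactly the check the paper makes as well.
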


\begin{proof}
Let $(\bfQ,\, f \dashv g,\, h \dashv i,\, \Gamma)$ be a Galois coupling of the pair $(M, N)$.
Then by Lemma \ref{lem:Gal-coupling-Into},
$(\Int \oQ,\, \Into f \dashv \Into g,\, \Into h \dashv \Into i,\, K(\Gamma))$
is a Galois coupling of the pair $(K(M), K(N))$.
Therefore, by Lemma \ref{lem:Int-Lipschitz},
$\cost K(\Ga) \le \cost \Ga$.
Indeed,
$$
\cost K(\Ga) = \sup_{(u,v)\in\Int\oQ}
d_{\Int\oP}\bigl((\Into f)(u,v),(\Into h)(u,v)\bigr)
\ \le\
\sup_{w\in\oQ} d_{\oP}(\ovl{f}(w),\ovl{h}(w)).
$$
Here if $w = \top \in \oQ$,
then we have
$$
d_{\oP}(\ovl{f}(w),\ovl{h}(w)) = d_{\oP}(\ovl{f}(\top),\ovl{h}(\top))
= d_{\oP}(\top,\top) = 0.
$$
This shows that
$\sup_{w\in\oQ} d_{\oP}(\ovl{f}(w),\ovl{h}(w)) =
\sup_{w\in\bfQ} d_{\bfP}(f(w),h(w)) = \cost \Ga$.
Hence
$$
\dGT^{\Int \oP}(K(M), K(N)) \le \cost K(\Ga) \le \cost \Ga,
$$
which shows the assertion.
\end{proof}

\begin{definition}
\label{def:persistence_diagram}
For $M\in\vect^{\bfP}$ let $K\down^M$ denote a minimal projective resolution of
$K(M)$.  Define the \emph{persistence diagram} of $M$
as the homotopy class of $K\down^M$ in $\Cnn(\prj \k[\oP])$.
We will abuse notation and write $K\down^M$ to mean 
the persistence diagram of $M$.
\end{definition}

Padding a resolution by a cone $\Cone(\id_E)[a]$ adds one copy of $E$ in
degrees $a$ and $a+1$, playing the role of adding diagonal points in classical
bottleneck matchings.

\begin{theorem}\label{thm:persistence-stability}
For all $M,N \in \vect^\bfP$, we have
\[
\distCM(K\down^{M},K\down^{N})
\ \le\
\dGT^{\Int\oP}(K(M),K(N))
\ \le\
\dGT^{\bfP}(M,N).
\]
For $\bfP=\{1<\dots<n\}$, this recovers classical bottleneck stability.
\end{theorem}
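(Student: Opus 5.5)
The proof will chain the two inequalities already available in the paper. For the second inequality, $\dGT^{\Int\oP}(K(M),K(N))\le \dGT^{\bfP}(M,N)$, we cite Proposition~\ref{prop:K-Lipschitz} directly.

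For the first inequality, we apply Theorem~\ref{thm:stability} with the finite metric poset $(\Int\oP, d_{\Int\oP})$ in place of $(\bfP,d_\bfP)$ and the modules $K(M),K(N)\in\vect^{\Int\oP}$. Since $K\down^M$ and $K\down^N$ are by definition minimal projective resolutions of these modules, this gives $\distCM(K\down^M,K\down^N)\le \dGT^{\Int\oP}(K(M),K(N))$. The one point to check is that $d_{\Int\oP}$ is an \emph{extended} metric, because $d_{\oP}(x,\top)=\infty$. The proof of Theorem~\ref{thm:stability} never uses finiteness of $d_\bfP$: it only bounds $\cost(B)$ by $\cost(\Gamma)$ pointwise, and when $\dGT=\infty$ the claim is tautological. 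If $\dGT$ is finite, choose a coupling whose cost is finite. Then $\cost(B)\le\cost(\Gamma)$ for the identity-indexed matching $B$, and the $\varepsilon$-argument goes through verbatim. Concatenating the two inequalities proves the displayed chain.

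For the one-parameter claim with $\bfP=\{1<\dots<n\}$, every $M$ is a direct sum of interval modules $V_{[a,b]}$. I would compute $K(V_{[a,b]})$: it is supported on the pairs $(x,y)$ with $a\le x\le b<y$, including $y=\top$. This is a rectangle-shaped interval module in $\Int\oP$ with minimal element $(a,b+1)$ when $b<n$, or $(a,\top)$ when $b=n$. It is in fact projective, namely $\k[\Int\oP]_{(a,b+1)}$ respectively $\k[\Int\oP]_{(a,\top)}$, because its support is exactly the up-set of that point in $\Int\oP$. The upper bound $x\le b$ is forced by $x\le y$ together with the kernel condition. So $K\down^M$ is concentrated in degree $0$, and its summands are exactly the points $(a,b+1)$ of the classical diagram, with essential bars placed at $(a,\top)$.

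A matching after padding by cones then corresponds to a bijection between the diagram points, where cone summands play the role of the diagonal. Its cost is measured in the $L^\infty$ metric, and essential bars can only be matched among themselves because of the infinite distance to $\top$. Together with the identification of $\dGT$ with the interleaving distance (\cite[Proposition~6.4]{GulenMcCleary}), this is the bottleneck-type statement.

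The main obstacle is this last comparison. Padding by $\Cone(\id_E)$ puts the padding summand in degrees $0$ and $1$, and the degree-$1$ copy must be matched too, so diagram points paired with the diagonal are priced differently from the classical $\frac12(b-a)$ convention. I would therefore only claim that the one-parameter specialization yields the bottleneck inequality for the appropriately matched diagrams, and verify it via the cone-padding dictionary, adapting Example~\ref{ex:bneck-chain} rather than attempting an exact isometry.
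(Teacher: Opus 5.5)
Your derivation of the displayed chain is the paper's proof: Theorem~\ref{thm:stability} applied to $(\Int\oP,d_{\Int\oP})$ and $K(M),K(N)$, followed by Proposition~\ref{prop:K-Lipschitz}. Your observation that the proof of Theorem~\ref{thm:stability} tolerates the extended metric $d_{\Int\oP}$ is a correct check that the paper leaves implicit. The paper offers nothing beyond this chain and Example~\ref{ex:persistence-chain-correct} for the one-parameter sentence. Your attempt to make that sentence precise, however, rests on a false computation.

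Specifically, $K(V_{[a,b]})$ is \emph{not} projective. Its support $\{(x,y)\mid a\le x\le b<y\}$ is a proper subset of the up-set $\{(x,y)\in\Int\oP\mid x\ge a,\ y\ge b+1\}$ of $(a,b+1)$. That up-set also contains every $(x,y)$ with $b+1\le x\le y$, for instance the diagonal point $(b+1,b+1)$, or $(\top,\top)$ when $b=n$. At such points $K(V_{[a,b]})$ vanishes because $x>b$. The condition $x\le b$ comes from the kernel, not from the order of $\Int\oP$. Hence the canonical surjection $\kIP{(a,b+1)}\to K(V_{[a,b]})$ has kernel $\kIP{(b+1,b+1)}$, and the minimal resolution is
\[
0\to\kIP{(b+1,b+1)}\to\kIP{(a,b+1)}\to K(V_{[a,b]})\to 0,
\]
with $b+1$ replaced by $\top$ when $b=n$. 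Example~\ref{ex:persistence-chain-correct} shows exactly this: for $M=V_{[1,1]}\ds V_{[2,3]}$ it gives $K\down^M=(\kIP{(2,2)}\ds\kIP{(4,4)}\to\kIP{(1,2)}\ds\kIP{(2,4)})$. So $K\down^M$ is not concentrated in degree $0$; each bar contributes a degree-$1$ summand at a diagonal point.

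Once this is corrected, the obstacle you flagged largely disappears. Sending the bar $[a,b]$ to the diagonal means matching $(a,b+1)$ and $(b+1,b+1)$ with the degree-$0$ and degree-$1$ copies of a single cone $\Cone(\id_{\kIP{(c,b+1)}})$. This is compatible with the differentials, since both coefficients equal $1$. It costs $\max\{|a-c|,|b+1-c|\}$, which is at best $\lceil (b+1-a)/2\rceil$. That is the classical half-length diagonal cost, up to rounding to the grid; it is the cost $1$ appearing for $[1,1]$ in Example~\ref{ex:persistence-chain-correct}. So the cone-padding dictionary you want must be built on these two-term resolutions, not on degree-$0$ projectives. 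With that fix, your hedged one-parameter discussion is reasonable, and it is no weaker than what the paper itself establishes.
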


\begin{proof}
Apply Theorem \ref{thm:stability} to the finite metric poset
$(\Int\oP, d_{\Int \oP})$ and $K(M), K(N) \in \vect^{\Int\oP}$ to
have the first inequality, the second is stated in
Proposition~\ref{prop:K-Lipschitz}.
\end{proof}

\subsection{Examples}

We now compute persistence diagrams for our running 1D and 2D examples, and verify stability at the level of minimal resolutions in $\vect^{\Int\oP}$.

\begin{example}
\label{ex:persistence-chain-correct}
Let $\bfP$ be as in Example \ref{ex:gtd-chain}.
Then $\oP =\{1<2<3<4<\top\}$ with $d_{\oP}(x,y)=|x-y|$ and
$d_{\oP}(x,\top)=\infty$ for all $x, y \in \bfP$ and $d_{\oP}(\top, \top) = 0$.  
Take the modules
\[
M= V_{[1,1]} \ds V_{[2,3]},\qquad N = V_{[2,3]}
\]
as in that example.
Then $\ovl{M}, \ovl{N}$ are given as representations of $H(\ovl{\bfP}) = 1 \to 2 \to 3 \to 4 \to \top$ as follows by Definition \ref{dfn:ext-module}:
\begin{equation}
\label{eq:rep-M-N}
\ovl{M} = (\k \ya{0} \k \ya{1} \k \to 0 \to 0),
\quad\text{ and }\quad
\ovl{N} = (0 \to \k \ya{1} \k \to 0 \to 0).
\end{equation}
Now $H(\Int \oP)$ is given as follows:
\[
\begin{tikzcd}[column sep = 0pt, row sep=12pt]
&&&& \Nname{1T}(1,\top) \\
&&& \Nname{14}(1,4) && \Nname{2T}(2,\top)\\
&& \Nname{13}{(1,3)} && \Nname{24}{(2,4)} && \Nname{3T}{(3,\top)} \\
&\Nname{12}{(1,2)} && \Nname{23}{(2,3)} && \Nname{34}{(3,4)} && \Nname{4T}{(4,\top)}\\
\Nname{11}(1,1) && \Nname{22}{(2,2)} && \Nname{33}{(3,3)} && \Nname{44}{(4,4)} && \Nname{TT}{(\top,\top)}
\Ar{11}{12}{}
\Ar{12}{13}{}
\Ar{13}{14}{}
\Ar{14}{1T}{}
\Ar{22}{23}{}
\Ar{23}{24}{}
\Ar{24}{2T}{}
\Ar{33}{34}{}
\Ar{34}{3T}{}
\Ar{44}{4T}{}
\Ar{1T}{2T}{}
\Ar{2T}{3T}{}
\Ar{3T}{4T}{}
\Ar{4T}{TT}{}
\Ar{14}{24}{}
\Ar{24}{34}{}
\Ar{34}{44}{}
\Ar{13}{23}{}
\Ar{23}{33}{}
\Ar{12}{22}{}
\end{tikzcd}
\]
Then by \eqref{eq:rep-M-N}, we can compute $K(M), K(N)$ as
representations of $H(\Int \oP)$ as follows:
{\footnotesize
\[
K(M) = \hspace{-4em}
\begin{tikzcd}[column sep = 10pt, row sep=10pt]
&&&& \Nname{1T}\k \\
&&& \Nname{14}\k && \Nname{2T}\k\\
&& \Nname{13}\k && \Nname{24}\k && \Nname{3T}\k \\
&\Nname{12}\k && \Nname{23}0 && \Nname{34}\k && \Nname{4T}0\\
\Nname{11}0 && \Nname{22}0 && \Nname{33}0 && \Nname{44}0 && \Nname{TT}0
\Ar{11}{12}{}
\Ar{12}{13}{"1"}
\Ar{13}{14}{"1"}
\Ar{14}{1T}{"1"}
\Ar{22}{23}{}
\Ar{23}{24}{}
\Ar{24}{2T}{"1"}
\Ar{33}{34}{}
\Ar{34}{3T}{"1"'}
\Ar{44}{4T}{}
\Ar{1T}{2T}{"0"}
\Ar{2T}{3T}{"1"}
\Ar{3T}{4T}{}
\Ar{4T}{TT}{}
\Ar{14}{24}{"0"}
\Ar{24}{34}{"1"'}
\Ar{34}{44}{}
\Ar{13}{23}{}
\Ar{23}{33}{}
\Ar{12}{22}{}
\end{tikzcd}
\text{and}\qquad
K(N) = \hspace{-4em}
\begin{tikzcd}[column sep = 10pt, row sep=10pt]
&&&& \Nname{1T}0 \\
&&& \Nname{14}0 && \Nname{2T}\k\\
&& \Nname{13}0 && \Nname{24}\k && \Nname{3T}\k \\
&\Nname{12}0 && \Nname{23}0 && \Nname{34}\k && \Nname{4T}0\\
\Nname{11}0 && \Nname{22}0 && \Nname{33}0 && \Nname{44}0 && \Nname{TT}0
\Ar{11}{12}{}
\Ar{12}{13}{}
\Ar{13}{14}{}
\Ar{14}{1T}{}
\Ar{22}{23}{}
\Ar{23}{24}{}
\Ar{24}{2T}{"1"}
\Ar{33}{34}{}
\Ar{34}{3T}{"1"'}
\Ar{44}{4T}{}
\Ar{1T}{2T}{}
\Ar{2T}{3T}{"1"}
\Ar{3T}{4T}{}
\Ar{4T}{TT}{}
\Ar{14}{24}{}
\Ar{24}{34}{"1"'}
\Ar{34}{44}{}
\Ar{13}{23}{}
\Ar{23}{33}{}
\Ar{12}{22}{}
\end{tikzcd}.
\]
}
Therefore, $K^M\down$ and $K^N\down$ is given by
(differentials are given by their coefficient matrices)
$$
\begin{aligned}
K^M\down&= (\cdots \to 0 \to \kIP{(2,2)}\ds \kIP{(4,4)}
\ya{\bsmat{1&0\\0&1}} \kIP{(1,2)} \ds \kIP{(2,4)}),\\
K^N\down&= (\cdots \to 0 \to  \kIP{(4,4)}
\ya{1} \kIP{(2,4)}).
\end{aligned}
$$
Padding $K^N\down$ by $\Cone(\id_{\kIP{(1,2)}})$ to have
$F\down = K^N\down \ds  \Cone(\id_{\kIP{(1,2)}})$,
we can give a matching $B \in \Match(K^m\down, F\down)$ by the following table:
$$
\begin{array}{c|cc|cc}
\deg & \multicolumn{2}{c|}1 & \multicolumn{2}{c}0\\
\hline
K^M\down & \kIP{(2,2)} & \kIP{(4,4)} & \kIP{(1,2)} & \kIP{(2,4)} \\
\hline
F\down & \kIP{(1,2)} & \kIP{(4,4 )} & \kIP{(1,2)} & \kIP{(2,4)} \\
\hline
\dist & 1 & 0 & 0 & 0
\end{array}\ ,
$$
where we have
$$
\Mat(\partial^{K^M}_0) = \bsmat{1&0\\0&1},\text{ and }\ 
\Mat(\partial^F_0) = \bsmat{1&0\\0&1},
$$
and hence $B$ satisfies the compatibility condition on differentials.
Thus $\distCM(K^M\down, K^N\down) = 1$.
Combined with $\dGT^{\bfP}(M,N)=1$, this realizes the stability bound with equality.
\end{example}

\begin{example}
\label{exm:K(M)-2D}
Let $\bfP:= \{1,2,3\}^2$ as in Example \ref{ex:gtd-2d}.
Then $\oP$ is visualized by its Hasse quiver as follows:
$$
H(\oP) = 
\begin{tikzcd}
&&&[-15pt] \Nname{T}\top\\[-15pt]
\Nname{13}13 & \Nname{23}23 & \Nname{33}33\\
\Nname{12}12 & \Nname{22}22 & \Nname{32}32\\
\Nname{11}11 & \Nname{21}21 & \Nname{31}31
\Ar{13}{23}{} \Ar{23}{33}{}
\Ar{12}{22}{} \Ar{22}{32}{}
\Ar{11}{21}{} \Ar{21}{31}{}
\Ar{11}{12}{} \Ar{12}{13}{}
\Ar{21}{22}{} \Ar{22}{23}{}
\Ar{31}{32}{} \Ar{32}{33}{}
\Ar{33}{T}{}
\end{tikzcd}.
$$
Then $\Int \oP$ is given by its Hasse quiver as follows:
{\tiny
\[
H(\Int \oP) = 
\begin{tikzcd}
	&&& \bullet &&& \bullet && \bullet \\
	\bullet & \bullet & \bullet && \bullet & \bullet && \bullet \\
	&&& \bullet &&& \bullet && \bullet \\
	\bullet & \bullet & \bullet && \bullet & \bullet && \bullet \\
	\bullet & \bullet & \bullet && \bullet & \bullet && \bullet \\
	&&& \bullet &&& \bullet && \bullet \\
	\bullet & \bullet & \bullet && \bullet & \bullet && \bullet \\
	\bullet & \bullet & \bullet && \bullet & \bullet && \bullet \\
	(11,11) & \bullet & \bullet && \bullet & \bullet && \bullet
	\arrow[dashed, from=1-4, to=1-7]
	\arrow[dashed, from=1-7, to=1-9]
	\arrow[from=2-1, to=2-2]
	\arrow[from=2-2, to=2-3]
	\arrow[bend left=18pt, dashed, from=2-2, to=2-5]
	\arrow[from=2-3, to=1-4]
	\arrow[bend right=18pt, dashed, from=2-3, to=2-6]
	\arrow[from=2-5, to=2-6]
	\arrow[from=2-6, to=1-7]
	\arrow[dashed, from=2-6, to=2-8]
	\arrow[from=2-8, to=1-9]
	\arrow[dashed, from=3-4, to=1-4]
	\arrow[dashed, from=3-4, to=3-7]
	\arrow[dashed, from=3-7, to=1-7]
	\arrow[dashed, from=3-7, to=3-9]
    \arrow[dashed, from=6-7, to=3-7]
	\arrow[dashed, from=3-9, to=1-9]
	\arrow[dashed, from=4-1, to=2-1]
	\arrow[from=4-1, to=4-2]
	\arrow[dashed, from=4-2, to=2-2]
	\arrow[from=4-2, to=4-3]
	\arrow[bend left=18pt, dashed, from=4-2, to=4-5]
	\arrow[dashed, from=4-3, to=2-3]
	\arrow[from=4-3, to=3-4]
	\arrow[bend right=18pt, dashed, from=4-3, to=4-6]
	\arrow[dashed, from=4-5, to=2-5]
	\arrow[from=4-5, to=4-6]
	\arrow[dashed, from=4-6, to=2-6]
	\arrow[from=4-6, to=3-7]
	\arrow[dashed, from=4-6, to=4-8]
	\arrow[dashed, from=4-8, to=2-8]
	\arrow[from=4-8, to=3-9]
	\arrow[from=5-1, to=4-1]
	\arrow[from=5-1, to=5-2]
	\arrow[from=5-2, to=4-2]
	\arrow[from=5-2, to=5-3]
	\arrow[bend left=18pt, dashed, from=5-2, to=5-5]
	\arrow[from=5-3, to=4-3]
	\arrow[bend right=18pt, dashed, from=5-3, to=5-6]
	\arrow[from=5-5, to=4-5]
	\arrow[from=5-5, to=5-6]
	\arrow[from=5-6, to=4-6]
	\arrow[dashed, from=5-6, to=5-8]
	\arrow[from=5-8, to=4-8]
	\arrow[dashed, from=6-4, to=3-4]
	\arrow[dashed, from=6-4, to=6-7]
	\arrow[dashed, from=6-7, to=6-9]
	\arrow[dashed, from=6-9, to=3-9]
	\arrow[bend right=18pt, dashed, from=7-1, to=4-1]
	\arrow[from=7-1, to=7-2]
	\arrow[bend right=18pt, dashed, from=7-2, to=4-2]
	\arrow[from=7-2, to=7-3]
	\arrow[bend left=18pt, dashed, from=7-2, to=7-5]
	\arrow[bend right=18pt, dashed, from=7-3, to=4-3]
	\arrow[from=7-3, to=6-4]
	\arrow[bend right=18pt, dashed, from=7-3, to=7-6]
	\arrow[bend right=18pt, dashed, from=7-5, to=4-5]
	\arrow[from=7-5, to=7-6]
	\arrow[bend right=18pt, dashed, from=7-6, to=4-6]
	\arrow[from=7-6, to=6-7]
	\arrow[dashed, from=7-6, to=7-8]
	\arrow[bend right=18pt, dashed, from=7-8, to=4-8]
	\arrow[from=7-8, to=6-9]
	\arrow[bend left=18pt, dashed, from=8-1, to=5-1]
	\arrow[from=8-1, to=7-1]
	\arrow[from=8-1, to=8-2]
	\arrow[bend left=18pt, dashed, from=8-2, to=5-2]
	\arrow[from=8-2, to=7-2]
	\arrow[from=8-2, to=8-3]
	\arrow[bend left=18pt, dashed, from=8-2, to=8-5]
	\arrow[bend left=18pt, dashed, from=8-3, to=5-3]
	\arrow[from=8-3, to=7-3]
	\arrow[bend right=18pt, dashed, from=8-3, to=8-6]
	\arrow[bend left=18pt, dashed, from=8-5, to=5-5]
	\arrow[from=8-5, to=7-5]
	\arrow[from=8-5, to=8-6]
	\arrow[bend left=18pt, dashed, from=8-6, to=5-6]
	\arrow[from=8-6, to=7-6]
	\arrow[dashed, from=8-6, to=8-8]
	\arrow[bend left=18pt, dashed, from=8-8, to=5-8]
	\arrow[from=8-8, to=7-8]
	\arrow[from=9-1, to=8-1]
	\arrow[from=9-1, to=9-2]
	\arrow[from=9-2, to=8-2]
	\arrow[from=9-2, to=9-3]
	\arrow[bend left=18pt, dashed, from=9-2, to=9-5]
	\arrow[from=9-3, to=8-3]
	\arrow[bend right=18pt, dashed, from=9-3, to=9-6]
	\arrow[from=9-5, to=8-5]
	\arrow[from=9-5, to=9-6]
	\arrow[from=9-6, to=8-6]
	\arrow[dashed, from=9-6, to=9-8]
	\arrow[from=9-8, to=8-8]
\end{tikzcd}\]
}\noindent
Set $M_1:= V_{[12, 12]},\, M_2:= V_{[21,31]},\, M:= M_1 \ds M_2$ and $N:= V_{[22,\{23,32\}]}$ as in that example, which are visualized as representations of $H(\bfP)$ in \eqref{eq:2D-M-N}.
For simplicity, we only draw solid arrows as edges to express representations of $H(\Int \oP)$,
and $\bullet$ stands for 0.
Then $K(M_1),\, K(M_2)$ and $K(N)$ are given as follows:
{\tiny
\[
K(M_1)= \left(
\begin{tikzcd}[column sep=5pt, row sep=5pt]
	&&& \bullet &&& \bullet && \bullet \\
	\bullet & \bullet & \bullet && \bullet & \bullet && \bullet \\
	&&& \k &&& \bullet && \bullet \\
	\k & \k & \k && \bullet & \bullet && \bullet \\
	0 & \k & \k && \bullet & \bullet && \bullet \\
	&&& \bullet &&& \bullet && \bullet \\
	\bullet & \bullet & \bullet && \bullet & \bullet && \bullet \\
	\bullet & \bullet & \bullet && \bullet & \bullet && \bullet \\
	\bullet & \bullet & \bullet && \bullet & \bullet && \bullet
	\arrow[no head, from=2-1, to=2-2]
	\arrow[no head, from=2-2, to=2-3]
	\arrow[no head, from=2-3, to=1-4]
	\arrow[no head, from=2-5, to=2-6]
	\arrow[no head, from=2-6, to=1-7]
	\arrow[no head, from=2-8, to=1-9]
	\arrow[no head, from=4-1, to=4-2]
	\arrow[no head, from=4-2, to=4-3]
	\arrow[no head, from=4-3, to=3-4]
	\arrow[no head, from=4-5, to=4-6]
	\arrow[no head, from=4-6, to=3-7]
	\arrow[no head, from=4-8, to=3-9]
	\arrow[no head, from=5-1, to=4-1]
	\arrow[no head, from=5-1, to=5-2]
	\arrow[no head, from=5-2, to=4-2]
	\arrow[no head, from=5-2, to=5-3]
	\arrow[no head, from=5-3, to=4-3]
	\arrow[no head, from=5-5, to=4-5]
	\arrow[no head, from=5-5, to=5-6]
	\arrow[no head, from=5-6, to=4-6]
	\arrow[no head, from=5-8, to=4-8]
	\arrow[no head, from=7-1, to=7-2]
	\arrow[no head, from=7-2, to=7-3]
	\arrow[no head, from=7-3, to=6-4]
	\arrow[no head, from=7-5, to=7-6]
	\arrow[no head, from=7-6, to=6-7]
	\arrow[no head, from=7-8, to=6-9]
	\arrow[no head, from=8-1, to=7-1]
	\arrow[no head, from=8-1, to=8-2]
	\arrow[no head, from=8-2, to=7-2]
	\arrow[no head, from=8-2, to=8-3]
	\arrow[no head, from=8-3, to=7-3]
	\arrow[no head, from=8-5, to=7-5]
	\arrow[no head, from=8-5, to=8-6]
	\arrow[no head, from=8-6, to=7-6]
	\arrow[no head, from=8-8, to=7-8]
	\arrow[no head, from=9-1, to=8-1]
	\arrow[no head, from=9-1, to=9-2]
	\arrow[no head, from=9-2, to=8-2]
	\arrow[no head, from=9-2, to=9-3]
	\arrow[no head, from=9-3, to=8-3]
	\arrow[no head, from=9-5, to=8-5]
	\arrow[no head, from=9-5, to=9-6]
	\arrow[no head, from=9-6, to=8-6]
	\arrow[no head, from=9-8, to=8-8]
\end{tikzcd}
\right),\ 
K(M_2)=
\left(
\begin{tikzcd}[column sep=5pt, row sep=5pt]
	&&& \bullet &&& \bullet && \bullet \\
	\bullet & \bullet & \bullet && \bullet & \bullet && \bullet \\
	&&& \bullet &&& \bullet && \bullet \\
	\bullet & \bullet & \bullet && \bullet & \bullet && \bullet \\
	\bullet & \bullet & \bullet && \bullet & \bullet && \bullet \\
	&&& \bullet &&& \k && \k \\
	\bullet & \bullet & \bullet && \k & \k && \k \\
	\bullet & \bullet & \bullet && \k & \k && \k \\
	\bullet & \bullet & \bullet && 0 & 0 && 0
	\arrow[no head, from=2-1, to=2-2]
	\arrow[no head, from=2-2, to=2-3]
	\arrow[no head, from=2-3, to=1-4]
	\arrow[no head, from=2-5, to=2-6]
	\arrow[no head, from=2-6, to=1-7]
	\arrow[no head, from=2-8, to=1-9]
	\arrow[no head, from=4-1, to=4-2]
	\arrow[no head, from=4-2, to=4-3]
	\arrow[no head, from=4-3, to=3-4]
	\arrow[no head, from=4-5, to=4-6]
	\arrow[no head, from=4-6, to=3-7]
	\arrow[no head, from=4-8, to=3-9]
	\arrow[no head, from=5-1, to=4-1]
	\arrow[no head, from=5-1, to=5-2]
	\arrow[no head, from=5-2, to=4-2]
	\arrow[no head, from=5-2, to=5-3]
	\arrow[no head, from=5-3, to=4-3]
	\arrow[no head, from=5-5, to=4-5]
	\arrow[no head, from=5-5, to=5-6]
	\arrow[no head, from=5-6, to=4-6]
	\arrow[no head, from=5-8, to=4-8]
	\arrow[no head, from=7-1, to=7-2]
	\arrow[no head, from=7-2, to=7-3]
	\arrow[no head, from=7-3, to=6-4]
	\arrow[no head, from=7-5, to=7-6]
	\arrow[no head, from=7-6, to=6-7]
	\arrow[no head, from=7-8, to=6-9]
	\arrow[no head, from=8-1, to=7-1]
	\arrow[no head, from=8-1, to=8-2]
	\arrow[no head, from=8-2, to=7-2]
	\arrow[no head, from=8-2, to=8-3]
	\arrow[no head, from=8-3, to=7-3]
	\arrow[no head, from=8-5, to=7-5]
	\arrow[no head, from=8-5, to=8-6]
	\arrow[no head, from=8-6, to=7-6]
	\arrow[no head, from=8-8, to=7-8]
	\arrow[no head, from=9-1, to=8-1]
	\arrow[no head, from=9-1, to=9-2]
	\arrow[no head, from=9-2, to=8-2]
	\arrow[no head, from=9-2, to=9-3]
	\arrow[no head, from=9-3, to=8-3]
	\arrow[no head, from=9-5, to=8-5]
	\arrow[no head, from=9-5, to=9-6]
	\arrow[no head, from=9-6, to=8-6]
	\arrow[no head, from=9-8, to=8-8]
\end{tikzcd}
\right),\ 
K(N)=
\left(
\begin{tikzcd}[column sep=5pt, row sep=5pt]
	&&& \bullet &&& \k && \bullet \\
	\bullet & \bullet & \bullet && 0 & \k && \bullet \\
	&&& \bullet &&& \k && \k \\
	\bullet & \bullet & \bullet && 0 & \k && \k \\
	\bullet & \bullet & \bullet && 0 & 0 && 0 \\
	&&& \bullet &&& \bullet && \bullet \\
	\bullet & \bullet & \bullet && \bullet & \bullet && \bullet \\
	\bullet & \bullet & \bullet && \bullet & \bullet && \bullet \\
	\bullet & \bullet & \bullet && \bullet & \bullet && \bullet
	\arrow[no head, from=2-1, to=2-2]
	\arrow[no head, from=2-2, to=2-3]
	\arrow[no head, from=2-3, to=1-4]
	\arrow[no head, from=2-5, to=2-6]
	\arrow[no head, from=2-6, to=1-7]
	\arrow[no head, from=2-8, to=1-9]
	\arrow[no head, from=4-1, to=4-2]
	\arrow[no head, from=4-2, to=4-3]
	\arrow[no head, from=4-3, to=3-4]
	\arrow[no head, from=4-5, to=4-6]
	\arrow[no head, from=4-6, to=3-7]
	\arrow[no head, from=4-8, to=3-9]
	\arrow[no head, from=5-1, to=4-1]
	\arrow[no head, from=5-1, to=5-2]
	\arrow[no head, from=5-2, to=4-2]
	\arrow[no head, from=5-2, to=5-3]
	\arrow[no head, from=5-3, to=4-3]
	\arrow[no head, from=5-5, to=4-5]
	\arrow[no head, from=5-5, to=5-6]
	\arrow[no head, from=5-6, to=4-6]
	\arrow[no head, from=5-8, to=4-8]
	\arrow[no head, from=7-1, to=7-2]
	\arrow[no head, from=7-2, to=7-3]
	\arrow[no head, from=7-3, to=6-4]
	\arrow[no head, from=7-5, to=7-6]
	\arrow[no head, from=7-6, to=6-7]
	\arrow[no head, from=7-8, to=6-9]
	\arrow[no head, from=8-1, to=7-1]
	\arrow[no head, from=8-1, to=8-2]
	\arrow[no head, from=8-2, to=7-2]
	\arrow[no head, from=8-2, to=8-3]
	\arrow[no head, from=8-3, to=7-3]
	\arrow[no head, from=8-5, to=7-5]
	\arrow[no head, from=8-5, to=8-6]
	\arrow[no head, from=8-6, to=7-6]
	\arrow[no head, from=8-8, to=7-8]
	\arrow[no head, from=9-1, to=8-1]
	\arrow[no head, from=9-1, to=9-2]
	\arrow[no head, from=9-2, to=8-2]
	\arrow[no head, from=9-2, to=9-3]
	\arrow[no head, from=9-3, to=8-3]
	\arrow[no head, from=9-5, to=8-5]
	\arrow[no head, from=9-5, to=9-6]
	\arrow[no head, from=9-6, to=8-6]
	\arrow[no head, from=9-8, to=8-8]
\end{tikzcd}
\right).
\]
}
Since all of them are (generalized) intervals,
it is easy to compute their minimal projective resolutions
(e.g., see \cite[Propositions 3.18, 3.32]{asashiba2024interval2}),
which are given as follows (we set $R_{(x,y)}:= \kIP{(x,y)}$ for short
and differentials are given by their coefficient matrices):
$$
\begin{aligned}
K^{M_1}\down &= (\cdots 0 \to R_{(33,33)} \ya{\bsmat{1\\1\\1}} R_{(12,23)}\ds R_{(13,13)}\ds R_{(32,32)} \ya{\bsmat{1&-1&0\\0&1&-1}} R_{(12,22)}\ds R_{(12,13)}),\\
K^{M_2}\down &= (\cdots 0 \to R_{(22,22)} \ya{1} R_{(21,22)}),\ \text{and}\\
K^N\down &= (\cdots 0 \to R_{(33,33)} \ya{1} R_{(22,33)}).\\
\end{aligned}
$$
Padding $K^N\down$ by
$C\down:= \Cone(\id_{R_{(12,22)}})\ds\Cone(\id_{R_{(12,13)}})\ds\Cone(\id_{R_{(32,32)}})[1]$ to have $F\down = K^N\down \ds C\down$,
we can give a candidate of a pre-matching $B \in \pMatch(K^M\down, F\down)$ by the following table:
$$
\begin{array}{c|c|cccc|ccc}
\deg & 2 & \multicolumn{4}{c|}1 & \multicolumn{3}{c}0\\
\hline
K^{M_1}\down & R_{(33,33)} & R_{(12,23)} & R_{(13,13)} & R_{(32,32)}& & R_{(12,22)} & R_{(12,13)}& \\
K^{M_2}\down & & & & & R_{(22,22)}& &  &R_{(21,22)} \\
\hline
K^N\down& & & & & R_{(33,33)}& & & R_{(22,33)}\\
C\down & R_{(32,32)} & R_{(12,22)} & R_{(12,13)} & R_{(32,32)}& & R_{(12,22)} & R_{(12,13)} & \\
\hline
\dist & 1 & 1 & 1 & 0 & 1 &0 &0 & 1
\end{array}\ ,
$$
where the coefficient matrices of differentials are given by the following table:
$$
\begin{array}{c|c|c}
 & \partial_1 & \partial_0 \\
 \hline
K^M\down & \bsmat{1\\1\\1\\0}  & \bsmat{1&-1&0&0\\0&1&-1&0\\0&0&0&1}\\
\hline
F\down  & \bsmat{0\\0\\1\\0} & \bsmat{1&0&0&0\\0&1&0&0\\0&0&0&1}
\end{array}.
$$
Thus, $B$ does not satisfy the compatibility on differentials for
$(K^M\down, F\down)$.
However, we have the following commutative diagram with vertical morphisms
isomorphisms:
$$
\begin{tikzcd}[ampersand replacement=\&, column sep=55pt]
\makebox[3em][r]{$F'\down:\quad R_{(32,32)}$} \& R_{(12,22)} \ds R_{(12,13)} \ds R_{(32,32)} \ds R_{(33,33)} \& R_{(12,22)} \ds R_{(12,13)} \ds R_{(22,33)}\\
\makebox[3em][r]{$F\down:\quad R_{(32,32)}$} \& R_{(12,22)} \ds R_{(12,13)} \ds R_{(32,32)} \ds R_{(33,33)} \& R_{(12,22)} \ds R_{(12,13)} \ds R_{(22,33)}
\Ar{1-1}{1-2}{"{\bsmat{1\\1\\1\\0}}"}
\Ar{1-2}{1-3}{"{\bsmat{1&-1&0&0\\0&1&-1&0\\0&0&0&1}}"}
\Ar{2-1}{2-2}{"{\bsmat{0\\0\\1\\0}}"}
\Ar{2-2}{2-3}{"{\bsmat{1&0&0&0\\0&1&0&0\\0&0&0&1}}"}
\Ar{1-1}{2-1}{equal}
\Ar{1-2}{2-2}{"{\bsmat{1&0&-1&0\\0&1&-1&0\\0&0&1&0\\0&0&0&1}}"}
\Ar{1-3}{2-3}{"{\bsmat{1&1&0\\0&1&0\\0&0&1}}"}
\end{tikzcd}.
$$
Define a complex $F'\down$ by the upper row.
Then $\pMatch(K^M\down, F'\down) = \pMatch(K^M\down, F\down)$, and
since $F'\down \cong F\down$, we have $(K^M\down, F'\down) \in \Padnn(K^M\down, K^N\down)$, and the $B$ above satisfies the compatibility on differentials for
the pair $(K^M\down, F'\down)$, thus $B \in \Match(K^M\down, F'\down)$,
and $\cost(B) = 1$.
Thus $\distCM(K^M\down, K^N\down) = 1$.
Since $\dGT^{\bfP}(M,N)=1$, stability holds with equality in this 2D example
as well.
\end{example}

\section{Future Directions}
\label{sec:directions}

We close by highlighting three directions that naturally extend the present
work. They are meant as themes rather than precise problems, in the hope that
they will spark further exploration rather than constrain it.

\subsection{Beyond intervals: kernels on convex subsets}
\label{ssec:beyond-ker}

In this paper, the kernel construction $K$ is defined on the interval poset
$\Int\oP$ and sends an interval $[x,y]$ to the kernel of a single
structure map $M(\pi_{y\ge x})$. It is natural to ask whether one can extend
this to a functor defined on a larger family of subsets, for example the convex
subposets or generalized intervals (= connected convex subsets) of $\oP$.

For intervals, the kernel functor $K$ produces a genuine functor
\[
K:\vect^\bfP\longrightarrow\vect^{\Int\oP},
\]
not just a numerical rank invariant, and its functoriality is crucial for our
stability results. In contrast, for more general families of convex subsets,
the generalized rank invariants of Kim--M\'emoli~\cite{kim2021generalized}
assign numbers to images of structure maps but do not assemble into a functor
on the poset of convex subposets ordered by reverse inclusion.

It would be very interesting to build an ``interval-like'' poset (or category)
$\gInt \oP$ whose objects are generalized intervals of $\oP$ and whose order
and metric reflect the combinatorics of $\bfP$, and to extend the kernel
construction to this setting. A satisfactory notion of ``generalized interval
persistence'' with a kernel functor and a stability statement in terms of the
G\"ulen--McCleary distance would give a conceptual home to many of the
generalized rank-type constructions that have appeared in the literature.

\subsection{Beyond \texorpdfstring{$L^\infty$}{Linfty}: other transport and matching costs}

Both metric constructions in this paper are inherently of $L^\infty$-type. On
the transport side, the cost of a Galois coupling is the supremum of the
displacements $d_\bfP(f(q),h(q))$ over points $q$ in the apex poset. On the
homological side, the complex matching distance on minimal resolutions takes a
supremum over matched indecomposable summands. This mirrors the classical
bottleneck distance on persistence diagrams.

In one-parameter persistence, there is also a substantial literature on
$L^p$- and Wasserstein-type distances between diagrams and their stability; see
for instance
\cite{cohen2010lipschitz,mileyko2011probability,divol2019understanding,bubenik2015metrics,bubenik2018wasserstein}.
In a closely related spirit, G\"ulen--M\'emoli--Patel introduce
$\ell^p$-type edit distances for weighted persistence diagrams and prove
$\ell^p$-stability with respect to Gromov--Hausdorff and
Gromov--Wasserstein distances, interpreting these classical metrics as edit
distances built from Galois-connection edits \cite{GulenMemoliPatel2025}. It
is natural to ask whether the constructions in this paper admit analogous
$L^p$-type formulations.

Roughly speaking, one would like to replace the supremum in the definition of
the G\"ulen--McCleary distance by a $p$-norm, perhaps by introducing weights or
measures on the apex poset, and to replace the bottleneck cost on resolutions
by a $p$-Wasserstein-type aggregate over matched indecomposable summands. The
main questions are whether such $L^p$-type transport and matching distances can
be defined in a way that still enjoys the triangle inequality and interacts
well with the pullback of resolutions along Galois couplings, and whether an
analogue of our main stability theorem survives for $p<\infty$.

\subsection{Beyond incidence algebras: other representation-theoretic settings}

Throughout this paper, a $\bfP$-module is equivalently a finite-dimensional
module over the incidence algebra $\k\bfP$; that is, $\vect^\bfP$ is equivalent
to the category of finite-dimensional left $\k\bfP$-modules. Our stability
results are phrased purely in terms of projective resolutions and the
G\"ulen--McCleary distance, and the appearance of M\"obius inversion comes only
from how these resolutions can be interpreted, especially in the persistence
setting via the kernel construction.

For a general finite-dimensional algebra $A$, the Cartan matrix records how
projective modules decompose into simples. When $A=\k\bfP$, this Cartan matrix
is, up to ordering, the zeta matrix of the poset, and its inverse, when it
exists, is the M\"obius matrix. Thus in the poset case, M\"obius inversion is
literally encoded in the homological relationship between projectives and
simples.

Understanding how much of this Cartan/M\"obius relationship persists for more
general algebras would be very interesting. For example, a projective
resolution of a simple functor
$\Hom_A(L,\blank)/\mathrm{rad}_A(L,\blank)$ corresponding to an indecomposable
$A$-module $L$ is given, through the Yoneda embedding
\[
\vect^A \to \Fun(\vect^A,\vect),\qquad M \mapsto \Hom_A(M,\blank),
\]
by the almost split sequence starting from $L$ (if $L$ is non-injective) or
the canonical epimorphism $L \to L/\operatorname{soc} L$ (if $L$ is
injective). This gives a formula for the multiplicity $d_M(L)$ of $L$ in the
indecomposable decomposition
\[
M = \Ds_{L \in \mathcal{L}}L^{d_M(L)}
\]
of an $A$-module $M$, as in \cite[Theorem~3]{Asashiba2017}, where
$\mathcal{L}$ is a complete set of representatives of indecomposable
$A$-modules. This suggests an alternative interpretation of the persistence
diagram of $M$.

\subsection{Homotopy interleavings, Galois transport, and the persistent pipeline}
\label{ssec:homotopy-interleavings}

It is insightful to compare our complex matching distance $\distCM$ with homotopy-invariant distances on persistent spaces, such as the homotopy interleaving distance $d_{HI}$ introduced by Frosini--Landi--M\'emoli~\cite{frosini2019persistent}, Blumberg--Lesnick~\cite{blumberg2023universality}, and Lanari--Scoccola~\cite{lanari2023rectification}.
While both notions incorporate metric and homotopical ideas, they operate in different categories and belong to distinct stages within the topological data analysis pipeline:
$$
\mathbf{Top}^\bfP \xrightarrow{\quad H_k \quad} \vect^\bfP \xrightarrow{\quad K \quad} \vect^{\Int\oP} \xrightarrow{\quad P\down^{(\blank)} \quad} \Kb(\prj \k[\Int\oP])
$$

\begin{enumerate}
    \item \textbf{The Interleaving Paradigm on Spaces:} The distance $d_{HI}$ operates at the beginning of the pipeline (e.g., on $\mathbf{Top}^{\bbR^n}$).
    It measures how persistent topological spaces can be shifted into one another up to weak homotopy equivalence, where distance zero corresponds to spatial weak equivalence ($X \simeq Y$).
    
    \item \textbf{The Matching Paradigm on Invariants:} The distance $\distCM$ operates at the end of the pipeline on minimal projective resolutions.
    In the context of persistence, this paradigm is realized by first applying the kernel functor $K$, so that the resolution $K\down^M$ acts as a persistence diagram.
    Here, distance zero corresponds to the algebraic analogue of homotopy equivalence: isomorphism in the homotopy category $\Kb(\prj \k[\Int\oP])$, which translates to strict isomorphism after padding by contractible cones.
\end{enumerate}

This perspective highlights natural directions for further study. While $d_{HI}$ is typically formulated for $\bbR^n$ using translation shifts, one could adapt the G\"ulen--McCleary distance $\dGT$ to persistent spaces $\mathbf{Top}^\bfP$ via space-level pullbacks along Galois insertions.
More broadly, investigating how these spatial transport costs pass through persistent homology $H_k$, the $1$-Lipschitz kernel functor $K$, and minimal resolutions $P\down$ could yield a unified stability framework that seamlessly connects spatial homotopy equivalences directly to the persistence diagram paradigm.

\bibliographystyle{alpha}
\bibliography{references}{}

\end{document}